\numberwithin{figure}{section}
\numberwithin{equation}{section}
\theoremstyle{plain}
\newtheorem{thm}{\protect\theoremname}[section]
  \theoremstyle{definition}
  \newtheorem{defn}[thm]{\protect\definitionname}
  \theoremstyle{plain}
  \newtheorem{prop}[thm]{\protect\propositionname}
  \theoremstyle{remark}
  \newtheorem{rem}[thm]{\protect\remarkname}
  \theoremstyle{plain}
  \newtheorem{lem}[thm]{\protect\lemmaname}
  \theoremstyle{plain}
  \newtheorem{cor}[thm]{\protect\corollaryname}
  \theoremstyle{definition}
  \newtheorem{example}[thm]{\protect\examplename}
\tikzset{commutative diagrams/.cd, mysymbol/.style={start anchor=center,end anchor=center,draw=none}}
\newcommand{\commutes}[2][\circ]{\arrow[mysymbol]{#2}[description]{#1}}
\newcommand{\exact}[2][\square]{\arrow[mysymbol]{#2}[description]{#1}}
\newcommand{\PB}[2][\square]{\arrow[mysymbol,pos=0.27]{#2}[description]{#1}}
\newcommand{\PO}[2][\square]{\arrow[mysymbol,pos=0.27]{#2}[description]{#1}}
\newcommand{\dotminus}{\mathbin{\text{\@dotminus}}}
\newcommand{\@dotminus}{%
  \ooalign{\hidewidth\raise1ex\hbox{.}\hidewidth\cr$\m@th-$\cr}}
\newcommand*{\rom}[1]{\expandafter\@slowromancap\romannumeral #1@}
\newcommand{\CA}{\mathcal{A}}
\newcommand{\CB}{\mathcal{B}}
\newcommand{\CC}{\mathcal{C}}
\newcommand{\CH}{\mathcal{H}}
\newcommand{\CI}{\mathcal{I}}
\newcommand{\CP}{\mathcal{P}}
\newcommand{\CS}{\mathcal{S}}
\newcommand{\CT}{\mathcal{T}}
\newcommand{\CU}{\mathcal{U}}
\newcommand{\CV}{\mathcal{V}}
\newcommand{\CX}{\mathcal{X}}
\newcommand{\SF}{\mathscr{F}}
\newcommand{\SG}{\mathscr{G}}
\newcommand{\BA}{\mathbb{A}}
\newcommand{\BC}{\mathbb{C}}
\newcommand{\BR}{\mathbb{R}}
\newcommand{\BZ}{\mathbb{Z}}
\newcommand{\deff}{\coloneqq}
\newcommand{\Hom}{\operatorname{Hom}\nolimits}
\newcommand{\im}{\operatorname{im}\nolimits}
\newcommand{\der}{\mathsf{D}}
\newcommand{\kom}{\mathsf{K}}
\renewcommand{\leq}{\leqslant}
\renewcommand{\phi}{\varphi}
\newcommand{\iso}{\cong}
\newcommand{\into}{\hookrightarrow}
\newcommand{\onto}{\rightarrow\mathrel{\mkern-14mu}\rightarrow}
\newsavebox{\wideeqbox}
\let\amph=&
\newcommand{\binprod}{\mathbin\Pi}
\newcommand{\bincoprod}{\mathbin\amalg}
\newcommand{\ring}{k}
\newcommand{\sus}{\Sigma}
\newcommand{\add}{\operatorname{\mathsf{add}}\nolimits}
\newcommand{\End}{\operatorname{End}\nolimits}
\newcommand{\Aut}{\operatorname{Aut}\nolimits}
\newcommand{\Ext}{\operatorname{Ext}\nolimits}
\newcommand{\Ab}{\operatorname{\mathsf{Ab}}\nolimits}
\newcommand{\XR}{\operatorname{\mathcal{X}_\emph{R}}\nolimits}
\newcommand{\per}{\operatorname{\mathsf{per}}\nolimits}
\newcommand{\rad}{\operatorname{rad}\nolimits}
\newcommand{\op}{\mathrm{op}}
\newcommand{\Ker}{\operatorname{Ker}\nolimits}
\newcommand{\Cok}{\operatorname{Coker}\nolimits}
\newcommand{\cok}{\operatorname{coker}\nolimits}
\newcommand{\Image}{\operatorname{Im}\nolimits}
\newcommand{\Coim}{\operatorname{Coim}\nolimits}
\newcommand{\image}{\operatorname{im}\nolimits}
\newcommand{\coim}{\operatorname{coim}\nolimits}
\newcommand{\wt}[1]{\widetilde{#1}}
\newcommand{\wh}[1]{\widehat{#1}}
\newcommand{\rmod}[1]{\operatorname{\mathsf{mod}\,--}\nolimits{#1}}
\newcommand{\lmod}[1]{{#1}\operatorname{--\,\mathsf{mod}}\nolimits}
\newenvironment{acknowledgements}{\begin{abstract}} {\end{abstract}}
  \providecommand{\corollaryname}{Corollary}
  \providecommand{\definitionname}{Definition}
  \providecommand{\examplename}{Example}
  \providecommand{\lemmaname}{Lemma}
  \providecommand{\propositionname}{Proposition}
  \providecommand{\remarkname}{Remark}
\providecommand{\theoremname}{Theorem}
\begin{document}

\title{Auslander-Reiten theory in quasi-abelian and Krull-Schmidt categories\footnote{\url{https://doi.org/10.1016/j.jpaa.2019.04.017}}}

\author{Amit Shah}
\address{School of Mathematics \\ 
University of Leeds \\ 
Leeds, LS2 9JT \\ 
United Kingdom
}
\email{mmas@leeds.ac.uk}
\date{\today}
\keywords{Auslander-Reiten theory, quasi-abelian category, Krull-Schmidt category, irreducible morphism, Auslander-Reiten sequence, cluster category}
\subjclass[2010]{Primary 18E05; Secondary 18E10, 18E40}

\begin{abstract}
We generalise some of the theory developed for abelian categories
in papers of Auslander and Reiten to semi-abelian and quasi-abelian
categories. In addition, we generalise some Auslander-Reiten
theory results of S. Liu for Krull-Schmidt categories by removing
the $\Hom$-finite and indecomposability restrictions. Finally, we
give equivalent characterisations of Auslander-Reiten sequences in
a quasi-abelian, Krull-Schmidt category.
\end{abstract}
\maketitle
\section{\label{sec:Introduction}Introduction}

As is well-known, the work of Auslander and Reiten on \emph{almost split sequences}
(which later also became known as \emph{Auslander-Reiten
sequences}), introduced in \cite{AuslanderReiten-Rep-theory-of-Artin-algebras-III},
has played a large role in comprehending the representation theory
of artin algebras. In trying to understand these sequences, it became
apparent that two types of morphisms would also play a fundamental
role (see \cite{AuslanderReiten-Rep-theory-of-Artin-algebras-IV}).
Irreducible morphisms and minimal left/right almost split morphisms
(see Definitions \ref{def:irreducible morphism} and \ref{def:(min) left right almost split},
respectively) were defined in \cite{AuslanderReiten-Rep-theory-of-Artin-algebras-IV},
and the relationship between these morphisms and Auslander-Reiten
sequences was studied. In fact, many of the abstract results of Auslander
and Reiten were proven for an arbitrary abelian category, not just
a module category, and in this article we show that much of this Auslander-Reiten
theory also holds in a more general context---namely in
that of a quasi-abelian category.

A \emph{quasi-abelian} category is an additive category that has kernels
and cokernels, and in which kernels are stable under pushout and cokernels
are stable under pullback. Classical examples of such categories include:
any abelian category; the category of filtered modules over a filtered
ring; and the category of topological abelian groups. A modern example
has recently arisen from cluster theory as we recall now. Let $\CC$
denote the cluster category (see \cite{BMRRT-cluster-combinatorics},
\cite{CalderoChapotonSchiffler-quivers-arising-from-cluster-a_n-case}) associated to a finite-dimensional hereditary
$k$-algebra, where $k$ is a field. Fix a basic rigid object $R$
of $\CC$ and consider the \emph{partial cluster-tilted} algebra $\Lambda_{R}\deff(\End_{\CC}R)^{\op}$. In the study of the category
$\lmod{\Lambda_{R}}$ of finitely generated left $\Lambda_{R}$-modules,
the additive quotient $\CC/[\CX_{R}]$ has been useful, where $[\CX_{R}]$
is the ideal of morphisms factoring through objects of $\CX_{R}=\Ker(\Hom_{\CC}(R,-))$,
because a certain Gabriel-Zisman localisation (see \cite{GabrielZisman-calc-of-fractions})
of it is equivalent to $\lmod{\Lambda_{R}}$ (see \cite[Thm. 5.7]{BuanMarsh-BM2}).
We showed in \cite{Shah-quasi-abelian-hearts-of-twin-cotorsion-pairs-on-triangulated-cats}
that $\CC/[\CX_{R}]$ is a quasi-abelian category, and hence the generalisations
of the results of Auslander and Reiten that we prove in \S\ref{sec:Auslander-Reiten-theory-in-quasi-abelian-categories}
of this article can be used to more fully understand this category.

Furthermore, $\CC$, and hence $\CC/[\CX_{R}]$, is a $\Hom$-finite,
Krull-Schmidt category, and so one can utilise techniques from a different
perspective. Liu initiated the study of Auslander-Reiten theory in
$\Hom$-finite, Krull-Schmidt categories in \cite{LiuS-AR-theory-in-KS-cat}
and, in particular, introduced the notion of an admissible ideal (see
Definition \ref{def:admissible ideal of morphisms}). It was shown
in \cite[\S 1]{LiuS-AR-theory-in-KS-cat} that if $\CA$
is a $\Hom$-finite, Krull-Schmidt category and $\CI$ is an admissible
ideal of $\CA$, then, under suitable assumptions, irreducible morphisms
(between indecomposables) and minimal left/right almost split morphisms
behave well under the quotient functor $\CA\to\CA/\CI$. We extend
these results of Liu in the following ways: first, we are able to
remove the $\Hom$-finite assumption for all the results in \cite[\S 1]{LiuS-AR-theory-in-KS-cat};
and second, we are able to remove the indecomposability assumption
in \cite[Lem. 1.7 (1)]{LiuS-AR-theory-in-KS-cat}.
We also prove new observations in this setting, inspired by work of
Auslander and Reiten. Moreover, in case $\CA$ is quasi-abelian and
Krull-Schmidt we are able to provide equivalent criteria (see Theorem
\ref{thm: ASS IV.1.13 for KS quasi-abelian cat - xi X to Y to Z exact seq AR iff EndX local and g right almost split iff EndZ local and f right almost split iff f min left almost split iff g min right almost split iff EndX,EndZ local and f,g irreducible})
for when a short exact sequence in $\CA$ is an \emph{Auslander-Reiten
sequence} (in the sense of Definition \ref{def:AR-sequence in an additive category}).
In particular, this characterisation applies to the category $\CC/[\CX_{R}]$.

This article is structured as follows. In \S\ref{sec:prelims} we recall some background
material on preabelian categories, and in particular we remind the
reader on how one may define the first extension group for such categories.
In \S\ref{sec:Auslander-Reiten-theory-in-quasi-abelian-categories} we develop Auslander-Reiten theory for semi-abelian and quasi-abelian
categories. At the beginning of \S\ref{sec:Auslander-Reiten-Theory-in-Krull-Schmidt-Categories}, we switch focus to Auslander-Reiten
theory in Krull-Schmidt categories, and then present a characterisation
theorem for Auslander-Reiten sequences in a Krull-Schmidt, quasi-abelian
category. At the end of \S\ref{sec:Auslander-Reiten-Theory-in-Krull-Schmidt-Categories} we present an example of a $\Hom$-infinite, Krull-Schmidt category communicated to the author by P.-G. Plamondon. Lastly, in \S\ref{sec:example} we explore an example coming
from the cluster category as discussed above, which demonstrates the
theory we have developed in the earlier sections.

\section{Preliminaries}\label{sec:prelims}

\subsection{Preabelian categories\label{subsec:Preabelian-categories}}

The categories we study in \S\ref{sec:Auslander-Reiten-theory-in-quasi-abelian-categories} are semi-abelian and quasi-abelian
categories (see Definitions \ref{def:semi-abelian} and \ref{def:quasi-abelian category},
respectively). A category of either kind is a preabelian category (see Definition
\ref{def:preabelian cat}) with some additional structure. In this
section, we recall the notion of a preabelian category, and provide
some basic results that will be helpful later. For more details, we
refer the reader to \cite{Rump-almost-abelian-cats}.
\begin{defn}
\label{def:preabelian cat}\cite[\S 5.4]{BucurDeleanu-intro-to-theory-of-cats-and-functors}
A \emph{preabelian} category is an additive category in which every
morphism has a kernel and a cokernel.
\end{defn}

\begin{rem}
\label{rem:preabelian cats have split idempotents}By \cite[Prop. 6.5.4]{Borceux-handbook-1},
any preabelian category $\CA$ \emph{has split idempotents}, or is
\emph{idempotent complete}, (see \cite[Def. 6.5.3]{Borceux-handbook-1})
because every morphism in $\CA$ admits a kernel, so in particular
every idempotent does. See also \cite[p. 188]{Auslander-Rep-theory-of-Artin-algebras-I-II}
and \cite[\S 6]{Buhler-exact-cats}.
\end{rem}

The following lemma is standard but often useful.
\begin{lem}
\label{lem:add cat with split idems, X indecomp and Y neq 0, then f X to Y retraction implies f isomorphism - and dual statement}Suppose
$\CA$ is an additive category with split idempotents. Let $f\colon X\to Y$
be a morphism in $\CA$. \emph{\begin{enumerate}[(i)]
\item \emph{Suppose $X$ is indecomposable and $Y\neq 0$. If $f$ is a retraction, then $f$ is an isomorphism.}
\item \emph{Suppose $Y$ is indecomposable and $X\neq 0$. If $f$ is a section, then $f$ is an isomorphism.}
\end{enumerate}}
\end{lem}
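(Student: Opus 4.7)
The plan is to prove (i) directly and then obtain (ii) by a formal duality argument.

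For (i), suppose $f\colon X\to Y$ is a retraction, so there exists $g\colon Y\to X$ with $fg=1_Y$. The first step is to observe that $e\deff gf\colon X\to X$ is an idempotent, since
\[
e^2 = (gf)(gf) = g(fg)f = g\cdot 1_Y\cdot f = gf = e.
\]
Because $\CA$ has split idempotents, I can write $X\iso X_1\oplus X_2$ with $e$ corresponding to the projection onto $X_1$ along $X_2$. Since $X$ is indecomposable, one of $X_1, X_2$ must be zero, i.e.\ $e=0$ or $e=1_X$.

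The second step rules out $e=0$. If $gf=0$, then $f = 1_Y\cdot f = (fg)f = f(gf) = 0$, whence $1_Y = fg = 0$, forcing $Y=0$, contrary to hypothesis. Hence $gf = 1_X$, and combined with $fg=1_Y$ this shows $f$ is an isomorphism.

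For (ii), the cleanest route is to pass to the opposite category: $\CA^{\op}$ is additive and still has split idempotents (an idempotent in $\CA^{\op}$ is the same data as an idempotent in $\CA$), indecomposability is self-dual, and a section $f$ in $\CA$ becomes a retraction $f^{\op}$ in $\CA^{\op}$ from the indecomposable object $Y$ to the nonzero object $X$. Applying (i) in $\CA^{\op}$ gives that $f^{\op}$, hence $f$, is an isomorphism. Alternatively, one can redo the computation directly: if $gf=1_X$, then $fg$ is idempotent on $Y$, so $fg\in\{0,1_Y\}$, and $fg=0$ forces $g=(gf)g=g(fg)=0$ and then $1_X=gf=0$, contradicting $X\neq 0$.

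There is no real obstacle here; the only point requiring the hypotheses is ensuring the degenerate case of the idempotent $e=0$ does not occur, and this is precisely where the assumption $Y\neq 0$ (respectively $X\neq 0$) is used.
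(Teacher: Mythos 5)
Your proof is correct and follows essentially the same route as the paper: the paper invokes a standard remark of B\"uhler to decompose $X\iso Y\oplus Y'$ from the retraction (which is exactly the splitting of the idempotent $gf$ that you carry out by hand) and then uses indecomposability of $X$ together with $Y\neq 0$ to kill the complement. Your version is merely a self-contained unpacking of that citation, and your duality argument for (ii) matches the paper's ``the proof for (ii) is dual.''
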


\begin{proof}
We only prove (i); the proof for (ii) is dual. Suppose $X$ is indecomposable,
$Y\neq0$ and that $f\colon X\to Y$ is a retraction. Then, by \cite[Rem. 7.4]{Buhler-exact-cats},
$X\iso Y\oplus Y'$ with $f$ corresponding to the canonical projection
$Y\oplus Y'\onto Y$. However, $X$ is indecomposable and $Y\neq0$
implies $Y'=0$, so $f$ is an isomorphism.
\end{proof}
The next lemma may be found as an exercise in \cite{Osborne-basic-homological-algebra},
and follows from \cite[Lem. IX.1.8]{Aluffi-Chapter0}: the proof in \cite{Aluffi-Chapter0}
is for the corresponding result in an abelian category, but is sufficient
for Lemma \ref{lem:ker is ker of its cok - cok is cok of its ker}
since only the existence of (co)kernels is needed.
\begin{lem}
\label{lem:ker is ker of its cok - cok is cok of its ker}\emph{\cite[Exer. 7.13]{Osborne-basic-homological-algebra}}
In a preabelian category: \emph{\begin{enumerate}[(i)]
\item \emph{every kernel is the kernel of its cokernel; and}
\item \emph{every cokernel is the cokernel of its kernel.}
\end{enumerate}}
\end{lem}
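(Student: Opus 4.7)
The plan is to verify each statement directly from the universal properties of kernels and cokernels, so the argument is essentially a diagram chase that does not require any further structure beyond the existence of kernels and cokernels. Part (ii) will then follow by dualising.

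For (i), I would start with a morphism $k \colon K \to X$ that arises as a kernel, say $k = \ker f$ for some $f \colon X \to Y$. Let $c \colon X \to C$ be the cokernel of $k$, which exists by the preabelian hypothesis. My first step would be to use the universal property of $c$ on $f$: since $f \circ k = 0$, there is a unique $\bar{f} \colon C \to Y$ factoring $f = \bar{f} \circ c$. This piece is what lets me transport information from $f$ to $c$.

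Next, I would check the two conditions required for $k$ to be a kernel of $c$. The equation $c \circ k = 0$ is immediate from the definition of $c = \cok(k)$. For the universal property, suppose $g \colon Z \to X$ satisfies $c \circ g = 0$. Then $f \circ g = \bar{f} \circ c \circ g = 0$, so by the universal property of $k = \ker f$ there exists a unique $h \colon Z \to K$ with $g = k \circ h$. Uniqueness of $h$ is automatic because $k$, being a kernel, is a monomorphism. Hence $k$ is the kernel of $c$, establishing (i).

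For (ii), I would simply observe that the entire argument is self-dual: replacing kernels by cokernels, monomorphisms by epimorphisms, and reversing every arrow gives a proof that any cokernel $c \colon X \to C$ is the cokernel of its kernel. Since the paper is formulated to allow appealing to a dual argument (as already done in Lemma \ref{lem:add cat with split idems, X indecomp and Y neq 0, then f X to Y retraction implies f isomorphism - and dual statement}), I would just state this and omit the explicit verification. There is no real obstacle here — the only thing to be slightly careful about is the uniqueness of the factorisation $h$, which is guaranteed precisely because kernels are monic (dually, cokernels are epic), so no abelianness or other extra property is needed.
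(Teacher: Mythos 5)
Your proof is correct and is exactly the standard argument the paper has in mind: the paper gives no proof of its own, instead citing Osborne's exercise and noting that Aluffi's abelian-category proof goes through because only the existence of (co)kernels is used, which is precisely the factorisation-through-$\cok(k)$ argument you write out. Nothing to add.
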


For a morphism $f\colon X\to Y$ in an additive category, we denote
the kernel (respectively, cokernel) of $f$, if it exists, by $\ker f\colon\Ker f\to X$
(respectively, $\cok f\colon Y\to\Cok f$).
\begin{lem}
\label{lem:epic kernel or monic cokernel is an isomorphism}Let $f\colon X\to Y$
be a morphism in a preabelian category. If $f$ is an epimorphism
and a kernel (respectively, a monomorphism and a cokernel), then $f$
is an isomorphism.
\end{lem}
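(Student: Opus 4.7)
The plan is to reduce the statement to the observation that the kernel of a zero morphism is (up to unique isomorphism) the identity, using Lemma \ref{lem:ker is ker of its cok - cok is cok of its ker} to exhibit $f$ as such a kernel (or cokernel). I would prove the first assertion in full and indicate that the second follows dually.

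First I would suppose $f\colon X\to Y$ is an epimorphism and a kernel. Since $\CA$ is preabelian, $f$ has a cokernel $\cok f\colon Y\to\Cok f$ satisfying $(\cok f)\circ f=0$. Because $f$ is epic, this forces $\cok f=0$. By Lemma \ref{lem:ker is ker of its cok - cok is cok of its ker}(i), $f$ is a kernel of its cokernel, that is, $f=\ker(0\colon Y\to\Cok f)$. Since any morphism composes with the zero morphism to give zero, the universal property of $f$ yields a unique $h\colon Y\to X$ with $fh=1_{Y}$, so $f$ is a retraction. Since $f$ is a kernel, it is in particular a monomorphism; a monomorphism that is also a retraction is an isomorphism (from $f(hf)=f=f\cdot 1_{X}$ and cancellation). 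Hence $f$ is an isomorphism.

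For the parenthetical statement, suppose $f$ is a monomorphism and a cokernel. Dually, $\ker f\colon \Ker f\to X$ satisfies $f\circ\ker f=0$, and since $f$ is monic we obtain $\ker f=0$. Lemma \ref{lem:ker is ker of its cok - cok is cok of its ker}(ii) then gives $f=\cok(0\colon\Ker f\to X)$, whose universal property produces $h\colon Y\to X$ with $hf=1_{X}$, so $f$ is a section; being also a cokernel it is an epimorphism, and an epimorphism that is a section is an isomorphism.

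There is no real obstacle here beyond a careful bookkeeping of universal properties; the content of the proof is already packaged in Lemma \ref{lem:ker is ker of its cok - cok is cok of its ker}, together with the elementary facts that kernels are monic and cokernels are epic, and that a morphism which is both monic and split epic (respectively, both epic and split monic) is an isomorphism.
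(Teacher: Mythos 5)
Your proof is correct and follows essentially the same route as the paper's: both observe that $\cok f$ vanishes because $f$ is epic, invoke Lemma \ref{lem:ker is ker of its cok - cok is cok of its ker} to write $f=\ker(\cok f)$, factor $1_{Y}$ through this kernel to get a right inverse, and conclude using that a kernel is monic. The only cosmetic difference is that you verify ``monic retraction implies isomorphism'' by hand where the paper cites a reference.
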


\begin{proof}
We prove the statement for when $f$ is an epic kernel. The other
statement is dual.

Suppose $f\colon X\to Y$ is an epimorphism and a kernel. Note that
$f$ is monic by \cite[Lem. IX.1.4]{Aluffi-Chapter0}, and $\begin{tikzcd}[column sep=1.1cm]Y \arrow{r}{\cok f}& 0\end{tikzcd}$
is a cokernel of $f$ by \cite[Lem. IX.1.5]{Aluffi-Chapter0}. Now consider the identity morphism $1_{Y}\colon Y\to Y$, and notice that since $(\cok f)\circ1_{Y}=0\circ1_{Y}=0$
we have that $1_{Y}$ factors through $\ker(\cok f)=f$ (using Lemma
\ref{lem:ker is ker of its cok - cok is cok of its ker}). Thus, there
exists $g\colon Y\to X$ such that $fg=1_{Y}$. Thus, $f$ is a monic
retraction, and so an isomorphism by \cite[Thm. I.1.5]{McLarty-elementary-categories-elementary-toposes}.
\end{proof}
\begin{defn}
\label{def:image and coimage}\cite[p. 23]{Popescu-abelian-cats-with-apps-to-rings-and-modules}
Given a morphism $f\colon A\to B$ in an additive category, the \emph{coimage}
$\coim f\colon A\to\Coim f$, if it exists, is the cokernel $\cok(\ker f)$
of the kernel of $f$. Dually, the \emph{image} $\image f\colon\Image f\to B$
is the kernel $\ker(\cok f)$ of the cokernel of $f$.
\end{defn}

It is then immediate from Definition \ref{def:image and coimage}
that any image morphism is a monomorphism and any coimage is an epimorphism.
Furthermore, in a preabelian category $\CA$, any morphism $f$ admits
a factorisation as follows: $$\begin{tikzcd}[column sep=1.5cm] A \arrow[two heads]{d}[swap]{\coim f} \arrow{r}{f}\commutes{dr} & B \\ \Coim f  \arrow{r}[swap]{\widetilde{f}}& \Image f \arrow[hook]{u}[swap]{\image f}\end{tikzcd}$$where
$\widetilde{f}$ is known as the \emph{parallel of $f$}. This parallel
morphism is always an isomorphism in an abelian category, but this
may not be the case in general.

Recall that a sequence $X\overset{f}{\to}Y\overset{g}{\to}Z$ of morphisms
in an additive category is called \emph{short exact} if $f=\ker g$
and $g=\cok f$. We state a version of the well-known Splitting Lemma,
which is normally stated in the context of an abelian category (see,
for example, \cite[Prop. I.4.3]{MacLane-homology} or \cite[Prop. 1.8.7]{Borceux-handbook-2}),
for an additive category. Additionally, we do not assume initially
that the sequence of morphisms is short exact, since this is a consequence
of the equivalent conditions. We omit the proof because the one in
\cite{Borceux-handbook-2} works essentially unchanged.
\begin{prop}[Splitting Lemma]
\label{prop:Splitting-Lemma in additive category}Let $\CA$ be an
additive category with a sequence $X\overset{f}{\to}Y\overset{g}{\to}Z$
of morphisms. Then the following are equivalent.
\emph{\begin{enumerate}[(i)]
\item \emph{There is an isomorphism $Y\iso X\oplus Z$, where $f$ corresponds to the canonical inclusion $X\into X\oplus Z$ and $g$ to the canonical projection $X\oplus Z\onto Z$.}
\item \emph{The morphism $f$ is a section and $g=\cok f$.}
\item \emph{The morphism $g$ is a retraction and $f=\ker g$.}
\end{enumerate}}

In this case, $X\overset{f}{\to}Y\overset{g}{\to}Z$ is short exact.
\end{prop}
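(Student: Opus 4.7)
The plan is to prove the cycle of implications (i) $\Rightarrow$ (ii), (ii) $\Rightarrow$ (i), (i) $\Rightarrow$ (iii), (iii) $\Rightarrow$ (i), from which the three-way equivalence follows. The concluding assertion that the sequence is short exact will then be immediate from (i), since under the biproduct identification the canonical inclusion $X \to X \oplus Z$ is the kernel of the canonical projection $X \oplus Z \to Z$, and the canonical projection is the cokernel of the inclusion.

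The two implications beginning at (i) are routine biproduct manipulations in an additive category. If $Y \iso X \oplus Z$ with $f$ and $g$ the canonical inclusion and projection, respectively, then the complementary canonical projection is a retraction for $f$ and the complementary canonical inclusion is a section for $g$, while the universal properties of the biproduct yield both $g = \cok f$ and $f = \ker g$.

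The substance lies in (ii) $\Rightarrow$ (i); the implication (iii) $\Rightarrow$ (i) will then follow by dualising the argument. Assume $r \colon Y \to X$ satisfies $rf = 1_X$ and that $g = \cok f$. I first observe that $(1_Y - fr)f = f - f = 0$, so the universal property of $g$ yields a unique $s \colon Z \to Y$ with $sg = 1_Y - fr$. I then propose $\phi \colon Y \to X \oplus Z$ with components $r$ and $g$, and $\psi \colon X \oplus Z \to Y$ with components $f$ and $s$, as the inverse pair realising $Y \iso X \oplus Z$. The equation $\psi\phi = fr + sg = 1_Y$ is immediate, while $\phi\psi = 1_{X \oplus Z}$ reduces to four entry-wise identities: $rf = 1_X$ (hypothesis), $gf = 0$ (from $g = \cok f$), and $rs = 0$ and $gs = 1_Z$, the last two being deduced by post-composing with the epimorphism $g$ and using the defining relation $sg = 1_Y - fr$.

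The only subtlety worth flagging is that $\CA$ is merely additive and not preabelian here, so one cannot split the idempotent $fr \colon Y \to Y$ by invoking idempotent-completeness. The construction above bypasses this difficulty by producing the splitting directly from the universal property of $g = \cok f$. Beyond this point of care, I do not anticipate any serious obstacle.
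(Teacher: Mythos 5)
Your argument is correct and is essentially the standard proof that the paper defers to (the one in Borceux's Handbook, which the paper cites instead of writing out): factor $1_Y - fr$ through $g = \cok f$ to obtain $s$, then verify the biproduct identities by cancelling the epimorphism $g$. The only cosmetic remark is that one should note explicitly that the isomorphism $\phi = \begin{psmallmatrix} r \\ g \end{psmallmatrix}$ carries $f$ to the canonical inclusion and $g$ to the canonical projection, but this is immediate from $\phi f = \begin{psmallmatrix} 1_X \\ 0 \end{psmallmatrix}$ and $\pi_Z \phi = g$.
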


\begin{defn}
\label{def:non-split short exact seq}A short exact sequence $X\overset{f}{\to}Y\overset{g}{\to}Z$
in an additive category $\CA$ is called \emph{split} if it satisfies
any of the equivalent conditions of Proposition \ref{prop:Splitting-Lemma in additive category}.
Otherwise, the sequence is said to be \emph{non-split}.
\end{defn}

In a non-split short exact sequence $X\overset{f}{\to}Y\overset{g}{\to}Z,$
we have that $f$ is not a section and $g$ is not a retraction by
Proposition \ref{prop:Splitting-Lemma in additive category}. However,
more can be said as we see now.
\begin{lem}
\label{lem:non-split short exact sequence implies neither map is section or retraction}Let
$\CA$ be an additive category, and suppose $X\overset{f}{\to}Y\overset{g}{\to}Z$
is a non-split short exact sequence in $\CA$. Then both $f$ and
$g$ are neither sections nor retractions.
\end{lem}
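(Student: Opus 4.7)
The plan is to split the claim into two pieces. Half of it---that $f$ is not a section and $g$ is not a retraction---falls out immediately from the Splitting Lemma (Proposition \ref{prop:Splitting-Lemma in additive category}). Since $X \overset{f}{\to} Y \overset{g}{\to} Z$ is short exact, we have $f = \ker g$ and $g = \cok f$. If $f$ were a section, condition (ii) of that proposition would hold, forcing the sequence to split; dually, if $g$ were a retraction, condition (iii) would apply. Both cases contradict the non-splitness assumption.

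The remaining two cases I would reduce to the ones just handled. Suppose, for a contradiction, that $f$ is a retraction, with right inverse $s \colon Y \to X$ satisfying $fs = 1_{Y}$. Because $f = \ker g$ is in particular a monomorphism, from $f(sf) = (fs)f = f = f \cdot 1_{X}$ and monicity of $f$ we deduce $sf = 1_{X}$. Hence $f$ is an isomorphism, and in particular a section, contradicting the conclusion of the previous paragraph. The case of $g$ being a section is entirely dual: as $g = \cok f$ is an epimorphism, if it also admits a left inverse then the same argument (applied on the other side) shows $g$ is an isomorphism, hence a retraction, once again contradicting the first paragraph.

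There is really no serious obstacle here; the proof is a straightforward two-step reduction. The only mild ingredient beyond the Splitting Lemma is the standard categorical fact that a morphism which is both monic and a retraction (respectively, both epic and a section) is automatically an isomorphism, and this is verified inline as indicated.
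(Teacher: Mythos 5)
Your proof is correct and follows the same overall strategy as the paper's: first dispose of ``$f$ is not a section'' and ``$g$ is not a retraction'' via the Splitting Lemma, then rule out the remaining two cases by contradiction, reducing them to the first two. The only difference is in the mechanism of that reduction: where you use monicity of $f=\ker g$ to upgrade a hypothetical retraction $f$ to an isomorphism (hence a section, contradicting the first step), the paper instead notes that a retraction $f$ is epic, so $Z\iso\Cok f\iso 0$ and $g$ would then be a retraction. Both variants are equally elementary and equally valid, so nothing is gained or lost either way.
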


\begin{proof}
As noted above, we need only show that $f$ is not a retraction and
that $g$ is not a section. Assume, for contradiction, that $f$ is
a retraction. Then $f$ is an epimorphism and so $Z\iso\Cok f\iso0$
by \cite[Lem. IX.1.5]{Aluffi-Chapter0}. However, this implies that $g$ is
a retraction which is a contradiction. Therefore, $f$ cannot be a
retraction. Showing $g$ is not a section is dual.
\end{proof}

\subsection{\texorpdfstring{$\Ext$}{Ext} in a preabelian category\label{subsec:Ext-in-preabelian-cat}}

In order to avoid some $\Hom$-finiteness restrictions in later arguments,
we recall in this section how a first extension group (see Definition
\ref{def:Ext^1 in preabelian category}) may be defined in a preabelian
category in such a way that it is a bimodule (see Theorem \ref{thm:Ext^1 is a left-right EndX-EndZ bimodule in preabelian category}).
Although we follow the development in \cite{RichmanWalker-ext-in-preabelian-cats},
there is an error in their Theorem 4 (\cite[p. 523]{RichmanWalker-ext-in-preabelian-cats})
that is corrected in \cite{Cooper-stable-seqs-in-preabelian-cats}.
However, we also believe there should be more (set-theoretic) assumptions
in place to ensure that the first extension group is indeed a group
(see Remark \ref{rem:need Ext(Z X) to be a set to be a group}).

Throughout this section, $\CA$ denotes a preabelian category and
we suppose $X,Z$ are objects of $\CA$. We note for the next definition
that in a preabelian category pullbacks and pushouts always exist
as we have the existence of kernels and cokernels.
\begin{defn}
\label{def:cocomplex a xi and xi c for a cocomplex xi}\cite[p. 523]{RichmanWalker-ext-in-preabelian-cats}
Let $\xi\colon X\overset{f}{\to}Y\overset{g}{\to}Z$ be a \emph{complex}
(i.e. $g\circ f=0$) in $\CA$. Let $a\colon X\to X'$ be any morphism
in $\CA$. We define a new complex $a\xi$ as follows. First, we form
the pushout $X'\bincoprod_{X}Y$ of $a$ along $f$ with morphisms
$f'\colon X'\to X'\bincoprod_{X}Y$ and $a'\colon Y\to X'\bincoprod_{X}Y$.
Then we obtain a unique morphism $g'\colon X'\bincoprod_{X}Y\to Z$
using the universal property of the pushout with the morphisms $0\colon X'\to Z$
and $g\colon Y\to Z$ as in the following commutative diagram.$$\begin{tikzcd}X \arrow{r}{f}\arrow{d}{a}&Y\arrow{r}{g}\arrow{d}{a'} & Z\arrow[equals]{dd} \\
X'\arrow{r}{f'}\arrow[bend right]{drr}[swap]{0} &X'\bincoprod_{X}Y\arrow[dotted]{dr}{\exists !g'}\PO{ul}&\\
&&Z \end{tikzcd}$$The complex $a\xi$ is then defined to be $X'\overset{f'}{\to}X'\bincoprod_{X}Y\overset{g'}{\to}Z.$

Dually, we define $\xi b$ for a morphism $b\colon Z'\to Z$. The
commutative diagram$$\begin{tikzcd}X\arrow[dotted]{dr}{\exists ! f'}\arrow[equals]{dd}\arrow[bend left]{drr}{0} && \\ 
& Y \binprod_Z Z'\arrow{r}{g'}\arrow{d}{b'}\PB{dr}& Z'\arrow{d}{b}\\
X\arrow{r}{f} & Y\arrow{r}{g} & Z\end{tikzcd}$$summarises the construction and $\xi b$ is the complex $X\overset{f'}{\to}Y\binprod_{Z}Z'\overset{g'}{\to}Z'$.
\end{defn}

The next definition is standard terminology.
\begin{defn}
\label{def:stable under PB (PO)}Suppose $$\begin{tikzcd}A\arrow{r}{a}\arrow{d}[swap]{b} & B \arrow{d}{c}\\ C \arrow{r}[swap]{d} & D\end{tikzcd}$$is
a commutative diagram in a category $\CA$. Let $\CP$ be a class of morphisms in $\CA$ (e.g. the class of all kernels in $\CA$). We say that $\CP$ is
\emph{stable under pullback} (respectively, \emph{stable under pushout})
if, in any diagram above that is a pullback (respectively, pushout),
$d$ is in $\CP$ implies $a$ is in $\CP$ (respectively, $a$ is in $\CP$
implies $d$ is in $\CP$).
\end{defn}

In a preabelian category, kernels are stable under pullback (see \cite[Thm. 1]{RichmanWalker-ext-in-preabelian-cats}),
but they may not be stable under pushout. Dually for cokernels. Thus,
Richman and Walker make the following definition.
\begin{defn}
\label{def:stable exact sequence}\cite[p. 524]{RichmanWalker-ext-in-preabelian-cats}
Let $\xi\colon X\overset{f}{\to}Y\overset{g}{\to}Z$ be a short exact
sequence in $\CA$. We say that $\xi$ is \emph{stable}, if $a\xi$
and $\xi b$ are short exact for all $a\colon X\to X',b\colon Z'\to Z$.
In this case, we call $f=\ker g$ a \emph{stable kernel} and $g=\cok f$
a \emph{stable cokernel}. We will sometimes also call $\xi$ \emph{stable
exact} in this case to emphasise the exactness of $\xi$.
\end{defn}

Suppose $\nu\colon A\overset{a}{\to}B\overset{b}{\to}C$ and $\xi\colon X\overset{f}{\to}Y\overset{g}{\to}Z$
are two short exact sequences in $\CA$. Recall that a \emph{morphism 
$(u,v,w)\colon\nu\to\xi$ of short exact sequences} is a
commutative diagram $$\begin{tikzcd}
A\arrow{r}{a}\arrow{d}{u} & B\arrow{r}{b}\arrow{d}{v} & C\arrow{d}{w}\\
X\arrow{r}{f} & Y\arrow{r}{g} & Z
\end{tikzcd}$$in $\CA$. If $A=X$ and $C=Z$, then a morphism of the form $(1_{X},v,1_{Z})$
in which $v\colon B\to Y$ is an isomorphism is called an \emph{isomorphism
of short exact sequences with the same end-terms}, and we denote this
by $\nu\tensor[_{X}]{\iso}{_{Z}}\xi$. This is clearly an equivalence
relation on the class of short exact sequences of the form $X\to-\to Z$.
\begin{thm}
\label{thm:C80 Thm 2 stable exact implies associativity ie bimodule structure for Ext}\emph{\cite[Thm. 2]{Cooper-stable-seqs-in-preabelian-cats}}
Suppose $\xi\colon X\overset{f}{\to}Y\overset{g}{\to}Z$ is a stable
exact sequence in $\CA$. Then there is an isomorphism $a(\xi b)\tensor[_{X'}]{\iso}{_{Z'}}(a\xi)b$
for all $a\colon X\to X',b\colon Z'\to Z$ in $\CA$.
\end{thm}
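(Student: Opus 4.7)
The plan is to name all the data of the two constructions, to build a comparison morphism $\phi\colon M\to N$ between the middle terms by nested universal properties, and then to argue that $\phi$ is an isomorphism.

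Write $\xi$ as $X\xrightarrow{f}Y\xrightarrow{g}Z$. Set $P\deff Y\binprod_{Z}Z'$, with projections $p\colon P\to Y$ and $g_{1}\colon P\to Z'$, so that $\xi b$ reads $X\xrightarrow{f_{1}}P\xrightarrow{g_{1}}Z'$. Set $M\deff X'\bincoprod_{X}P$, with structure maps $f_{2}\colon X'\to M$ and $q\colon P\to M$, so that $a(\xi b)$ reads $X'\xrightarrow{f_{2}}M\xrightarrow{g_{2}}Z'$. Dually, let $Q\deff X'\bincoprod_{X}Y$ with structure maps $f_{3}\colon X'\to Q$ and $r\colon Y\to Q$, giving $a\xi\colon X'\xrightarrow{f_{3}}Q\xrightarrow{g_{3}}Z$, and let $N\deff Q\binprod_{Z}Z'$ with projections $s\colon N\to Q$ and $g_{4}\colon N\to Z'$, giving $(a\xi)b\colon X'\xrightarrow{f_{4}}N\xrightarrow{g_{4}}Z'$. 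Stability of $\xi$, together with the fact that stability propagates through pushouts and pullbacks, ensures that all four sequences are short exact.

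Next I would construct $\phi\colon M\to N$ by two successive universal properties. The morphisms $r\circ p\colon P\to Q$ and $g_{1}\colon P\to Z'$ satisfy $g_{3}\circ(r\circ p)=g\circ p=b\circ g_{1}$ by commutativity of the pullback square for $\xi b$ and the pushout square for $a\xi$, so the pullback universal property of $N$ yields a unique $h\colon P\to N$ with $s\circ h=r\circ p$ and $g_{4}\circ h=g_{1}$. Comparing $h\circ f_{1}$ with $f_{4}\circ a$ after post-composition by each of $s$ and $g_{4}$ shows they agree, so the pushout universal property of $M$, applied to $f_{4}\colon X'\to N$ and $h\colon P\to N$, yields a unique $\phi\colon M\to N$ with $\phi\circ f_{2}=f_{4}$ and $\phi\circ q=h$. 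Checking $g_{4}\circ\phi=g_{2}$ via the pushout components of $M$ then confirms that $(1_{X'},\phi,1_{Z'})$ is a morphism of short exact sequences $a(\xi b)\to(a\xi)b$.

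The main obstacle is to show $\phi$ is an isomorphism. In a general preabelian category the five-lemma analogue fails, and the fact that $\phi$ is sandwiched between two short exact sequences with identity outer morphisms only forces $\phi$ to be a bimorphism, not an isomorphism. The decisive use of stability enters here: since stable exact sequences are preserved by both pushouts and pullbacks (the content of Cooper's corrections to Richman--Walker), one can invoke a stable $3\times 3$-lemma to embed $\phi$ into a commutative diagram of stable short exact sequences with vanishing outer terms, thereby forcing $\phi$ to be iso. Alternatively, one can construct an inverse $\psi\colon N\to M$ via a symmetric but more delicate universal-property argument, in which stability supplies the missing pushout-to-pullback comparison; once $\psi$ is in hand, $\phi\psi=1_{N}$ and $\psi\phi=1_{M}$ both follow by uniqueness in the respective universal properties.
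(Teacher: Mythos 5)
Your construction of the comparison morphism $(1_{X'},\phi,1_{Z'})\colon a(\xi b)\to(a\xi)b$ via the two nested universal properties is correct, and it coincides with the first step of Cooper's argument (note that the paper does not reprove this theorem: it cites \cite[Thm.~2]{Cooper-stable-seqs-in-preabelian-cats} and devotes the remark that follows to repairing the second half of that proof). The genuine gap is that you never actually prove $\phi$ is an isomorphism, and that step is the entire mathematical content of the statement. Your first suggested route, a ``stable $3\times 3$-lemma'', is not an available result in a preabelian category; you neither state it precisely nor exhibit the diagram with vanishing outer terms into which $\phi$ is supposed to embed. Your second route, an inverse $\psi\colon N\to M$ obtained ``by a symmetric universal-property argument'', cannot be run as described: $N$ is a pullback and $M$ is a pushout, so neither universal property by itself produces any morphism $N\to M$. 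This asymmetry between mapping into a pullback and mapping out of a pushout is exactly why the two orders of operations do not obviously agree, and calling the missing step ``delicate'' does not supply it.

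What actually closes the gap --- and what the paper's remark after the theorem is at pains to point out --- is the stability of $\xi$ fed through the uniqueness results of \cite{RichmanWalker-ext-in-preabelian-cats} (their Theorem~6 and Corollary~7): a morphism of the form $(1,v,1)$ between short exact sequences with the same end-terms, at least one of which arises as a pushout or pullback of a sequence with semi-stable kernel or cokernel, forces $v$ to be invertible. Cooper's proof first shows directly that $(a\xi)b$ is short exact and that $\phi$ is an epimorphism, and then a dual argument (using the corrected auxiliary sequence $\alpha(E\beta)\to(\alpha E)\beta\to\alpha E\to f_{2}\alpha E$ identified in the paper, not the one Cooper suggests) handles the kernel side; even after that, upgrading the resulting bimorphism to an isomorphism requires invoking the stability of $\xi$, since, as you yourself observe, bimorphisms in a preabelian category need not be invertible. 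You have correctly located the difficulty but not resolved it. A smaller issue: your claim that all four sequences are short exact because ``stability propagates through pushouts and pullbacks'' is true but is itself a nontrivial cited fact, not something that follows formally from Definition \ref{def:stable exact sequence}.
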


\begin{rem}
It can readily be seen that a statement like Theorem \ref{thm:C80 Thm 2 stable exact implies associativity ie bimodule structure for Ext}
is needed to give a definition of an extension group $\Ext_{\CA}^{1}(Z,X)$
that is also an $(\End_{\CA}X,\End_{\CA}Z)$-bimodule. Theorem \ref{thm:C80 Thm 2 stable exact implies associativity ie bimodule structure for Ext}
was claimed to hold for all sequences in \cite{RichmanWalker-ext-in-preabelian-cats}
(see \cite[Thm. 4]{RichmanWalker-ext-in-preabelian-cats}),
but a counterexample was given in \cite{Cooper-stable-seqs-in-preabelian-cats}.
Cooper presents the corrected statement as \cite[Thm. 2]{Cooper-stable-seqs-in-preabelian-cats},
presenting one half of the argument and suggesting a diagram to use
for the dual argument. 
However, the suggested dual diagram is not the right one to consider. 

First, let us recall the setup in \cite[Thm. 2]{Cooper-stable-seqs-in-preabelian-cats}. Let $E\colon A\overset{f}{\to} B \overset{g}{\to} C$ be a stable exact sequence, and suppose we have morphisms $\alpha\colon A\to A'$ and $\beta\colon C'\to C$. Then, as obtained in \cite[p. 266]{Cooper-stable-seqs-in-preabelian-cats}, there is a commutative diagram
$$\hspace{-3cm}\begin{tikzcd}
\alpha(E\beta)\colon \arrow{d}[swap]{(1_{A'},\phi_2,1_{C'})}& A' \arrow[equals]{d}\arrow{r}{f_2}& B_2 \arrow{r}{g_2}\arrow{d}{\phi_2}& C'\arrow[equals]{d} \\
(\alpha E)\beta\colon & A' \arrow{r}{f_3}& B_3\arrow{r}{g_3} & C'
\end{tikzcd}$$

It is shown in detail that $g_3=\cok f_3$ and $\phi_2$ is an epimorphism. It is then suggested that the diagram 
$$\alpha(E\beta)\to(\alpha E)\beta\to\alpha E\to f_{3}\alpha E$$ 
with a dual proof strategy will yield $f_2=\ker g_2$ and $\phi_2$ is a monomorphism. However, this diagram should be replaced by 
$$\alpha(E\beta)\to(\alpha E)\beta\to\alpha E\to f_{2}\alpha E.$$ 

Furthermore, we note that it is straightforward to find a morphism $\alpha(E\beta)\to(\alpha E)\beta$ of the form $(1_{A'},\phi_2,1_{C'})$, but to then show that $\phi_2$ is an isomorphism requires the stability of $E$ (see \cite[Cor. 7]{RichmanWalker-ext-in-preabelian-cats}). In an abelian category, the fact that $\phi_2$ is an isomorphism would follow quickly, for example, from the Five Lemma.
\end{rem}

Following \cite{MacLane-homology}, we introduce some notation
to help the reading of the sequel. Let $A,B,C,D$ be objects in $\CA$.
We denote by $\nabla_{A}$ the \emph{codiagonal} morphism $(\,1_{A}\,\;1_{A}\,)\colon A\oplus A\to A$, and denote by $\Delta_{A}$ the \emph{diagonal} morphism $\begin{psmallmatrix}1_A \\ 1_A\end{psmallmatrix}\colon A\to A\oplus A.$
For two morphisms $a\colon A\to C$ and $b\colon B\to D,$ we let
$a\oplus b$ denote the morphism $\begin{psmallmatrix}a & 0\\0&b\end{psmallmatrix}\colon A\oplus B\to C\oplus D.$
We are now in a position to define $\Ext_{\CA}^{1}(Z,X)$.
\begin{defn}
\label{def:Ext^1 in preabelian category}\cite[\S 4]{RichmanWalker-ext-in-preabelian-cats}
Let $\CA$ be a preabelian category. 
Define $\Ext_{\CA}^{1}(Z,X)$ to be the class of equivalence classes
under $\tensor[_{X}]{\iso}{_{Z}}$ of stable short exact sequences
of the form $X\to-\to Z$ in $\CA$.

By abuse of terminology/notation, by an element $\xi$ of $\Ext_{\CA}^{1}(Z,X)$
we will really mean the equivalence class $[\xi]_{\tensor[_{X}]{\iso}{_{Z}}}$
of $\xi$ in $\Ext_{\CA}^{1}(Z,X)$. If $\xi\colon X\overset{f}{\to}Y\overset{g}{\to}Z$,
$\xi'\colon X\overset{f'}{\to}Y'\overset{g'}{\to}Z$ are elements
of $\Ext_{\CA}^{1}(Z,X)$, then we define the \emph{Baer sum} of $\xi$
and $\xi'$ to be (the equivalence class of) $$\xi+\xi'\deff\nabla_{X}(\xi\oplus\xi')\Delta_{Z}.$$Note
that by \cite[Thm. 8]{RichmanWalker-ext-in-preabelian-cats}
and \cite[Thm. 2]{Cooper-stable-seqs-in-preabelian-cats},
$\xi+\xi'$ is stable exact and the Baer sum $+$ is a closed binary
operation on $\Ext_{\CA}^{1}(Z,X)$.
\end{defn}

\begin{rem}
\label{rem:need Ext(Z X) to be a set to be a group}It is observed
in \cite{RichmanWalker-ext-in-preabelian-cats} that one may
then follow \cite[pp. 70--71]{MacLane-homology} in order to show
that $\Ext_{\CA}^{1}(Z,X)$ is an abelian group. However, $\Ext_{\CA}^{1}(Z,X)$
may not form a (small) set and hence may not be a group. A similar issue arises in \cite{Cooper-stable-seqs-in-preabelian-cats}.

Note, however, that if $\CA$ is skeletally small, then $\Ext_{\CA}^{1}(Z,X)$
will be a set. Indeed, for objects $Y,Y'$ in $\CA$, the sets $\{\xi\colon X\to Y\to Z\mid\xi\text{ is short exact}\}$
and $\{\xi'\colon X\to Y'\to Z\mid\xi'\text{ is short exact}\}$ are
in bijection whenever $Y$ is isomorphic to $Y'$. So, up to equivalence
with respect to $\tensor[_{X}]{\iso}{_{Z}}$, the collection of all
short exact sequences of the form $X\overset{f}{\to}Y\overset{g}{\to}Z$
is determined only by the isomorphism class of $Y$ and the morphisms
$f,g$ since the end-terms $X$ and $Z$ are fixed. Therefore, the
collection of all $\tensor[_{X}]{\iso}{_{Z}}$-equivalence classes
will form a set, and hence restricting our attention to the classes
of stable exact sequences will also yield a set.
\end{rem}

These set-theoretic considerations lead us to the next theorem.
\begin{thm}
\emph{\label{thm:Ext^1 in preabelian cat is an abelian group}\cite[\S 4]{RichmanWalker-ext-in-preabelian-cats}}
Suppose $\CA$ is a preabelian category with objects $X,Z$, and suppose
$\Ext_{\CA}^{1}(Z,X)$ is a set. Then $\Ext_{\CA}^{1}(Z,X)$ is an
abelian group with the group operation given by the Baer sum defined
in \emph{Definition \ref{def:Ext^1 in preabelian category}}. The
class of the split extension $\xi_{0}\colon X\to X\oplus Z\to Z$
is the identity element, and the inverse of $\xi\in\Ext_{\CA}^{1}(Z,X)$
is $(-1_{X})\xi$.
\end{thm}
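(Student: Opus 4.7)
The plan is to follow the classical argument of Mac Lane (Homology, pp. 70--71), adapted carefully to the preabelian setting. The set-theoretic hypothesis in the statement is exactly what ensures that $\Ext^1_{\CA}(Z,X)$ is a bona fide object on which we may define a group structure. Before verifying the group axioms, I would first assemble the essential bookkeeping identities governing the operations $a(-)$, $(-)b$ and $-\oplus-$ on (equivalence classes of) stable exact sequences. These are: the functoriality $(a'a)\xi \tensor[_{X''}]{\iso}{_Z} a'(a\xi)$ and $\xi(bb') \tensor[_X]{\iso}{_{Z''}} (\xi b)b'$; the middle-associativity $a(\xi b) \tensor[_{X'}]{\iso}{_{Z'}} (a\xi)b$, which is precisely Theorem \ref{thm:C80 Thm 2 stable exact implies associativity ie bimodule structure for Ext}; and the distributivity rules $(a \oplus a')(\xi \oplus \xi') \tensor{\iso}{} (a\xi)\oplus(a'\xi')$ and $(\xi\oplus\xi')(b\oplus b') \tensor{\iso}{} (\xi b)\oplus(\xi' b')$. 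One also uses that any morphism of short exact sequences $(u,v,w)\colon \nu \to \xi$ gives rise to the equivalence $u\nu \tensor[_{X'}]{\iso}{_Z} \xi w$, which follows from the universal properties of pushout and pullback. Throughout, the closure statements of \cite[Thm. 8]{RichmanWalker-ext-in-preabelian-cats} and \cite[Thm. 2]{Cooper-stable-seqs-in-preabelian-cats} guarantee that each sequence in sight remains stable exact.

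With these identities in hand, I would check the axioms in the standard order. For the identity, one shows $\xi_0 + \xi \tensor[_X]{\iso}{_Z} \xi$ by producing a morphism of short exact sequences from $\xi$ to $\xi_0 \oplus \xi$ using the canonical inclusion $X \into X \oplus X$ and projection $Z \oplus Z \onto Z$, and then composing with $\nabla_X$ and $\Delta_Z$ so that both collapse to identities on the outside. For commutativity, let $\tau_A\colon A \oplus A \to A \oplus A$ be the swap isomorphism; the facts $\nabla_X \tau_X = \nabla_X$, $\tau_Z \Delta_Z = \Delta_Z$ and $\tau_X(\xi \oplus \xi') \tensor{\iso}{} (\xi' \oplus \xi)\tau_Z$ combine to give $\xi + \xi' \tensor[_X]{\iso}{_Z} \xi' + \xi$. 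Associativity uses the coherence identities $\nabla_X(\nabla_X \oplus 1_X) = \nabla_X(1_X \oplus \nabla_X)$ and $(\Delta_Z \oplus 1_Z)\Delta_Z = (1_Z \oplus \Delta_Z)\Delta_Z$, applied after pushing the $\oplus$-symbol around using the distributivity rules above; this is mechanical given Step 1. For the inverse axiom I would compute
$$\xi + (-1_X)\xi \tensor[_X]{\iso}{_Z} \nabla_X\bigl(1_X \oplus (-1_X)\bigr)(\xi \oplus \xi)\Delta_Z,$$
rewrite $\nabla_X(1_X \oplus (-1_X)) = 0 \colon X \oplus X \to X$, and conclude via Proposition \ref{prop:Splitting-Lemma in additive category} that the resulting sequence is split, hence equivalent to $\xi_0$.

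The main obstacle is not the formal manipulation but ensuring that every sequence produced along the way remains stable so that the middle-associativity of Theorem \ref{thm:C80 Thm 2 stable exact implies associativity ie bimodule structure for Ext} can be invoked wherever needed. In an abelian category one can freely appeal to the Five Lemma to verify that a proposed candidate for an equivalence $(1_X,v,1_Z)$ really has $v$ an isomorphism; in the preabelian setting each such comparison must be confirmed directly from the universal properties of the relevant pushouts and pullbacks, and its stability deduced from closure of the class of stable exact sequences under the operations $\alpha(-)$, $(-)\beta$ and $\oplus$. Once this has been done uniformly, the abelian-category proof transports verbatim.
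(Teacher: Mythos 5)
The paper does not actually prove this theorem: it is quoted from Richman--Walker, and Remark \ref{rem:need Ext(Z X) to be a set to be a group} merely records that one follows Mac Lane, pp.\ 70--71, once $\Ext_{\CA}^{1}(Z,X)$ is known to be a set. Your proposal is precisely that intended argument --- the Mac Lane axioms checked via the pushforward/pullback/direct-sum identities, with \cite[Thm.~8]{RichmanWalker-ext-in-preabelian-cats} and \cite[Thm.~2]{Cooper-stable-seqs-in-preabelian-cats} supplying closure and middle-associativity on stable sequences --- so the approach matches.

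One slip to fix in the inverse computation: $\nabla_{X}\bigl(1_{X}\oplus(-1_{X})\bigr)$ is the morphism $(\,1_{X}\;\,{-1_{X}}\,)\colon X\oplus X\to X$, which is not zero. The correct route is to first use your identity ``$(u,v,w)\colon\nu\to\xi$ gives $u\nu\tensor[_{X'}]{\iso}{_{Z}}\xi w$'' applied to the diagonal morphism $(\Delta_{X},\Delta_{Y},\Delta_{Z})\colon\xi\to\xi\oplus\xi$ to replace $(\xi\oplus\xi)\Delta_{Z}$ by $\Delta_{X}\xi$, and only then collapse: $(\,1_{X}\;\,{-1_{X}}\,)\Delta_{X}=0_{X}$, so the sum is $0_{X}\xi\tensor[_{X}]{\iso}{_{Z}}\xi_{0}$ by Lemma \ref{lem:Mac p71 0xi iso split and 1xi is xi and dual statements on other side}. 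With that correction the argument goes through as you describe.
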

Therefore, if $\CA$ is preabelian, $\Ext_{\CA}^{1}(Z,X)$ is known as a \emph{first extension group}. 
We state without proof one corresponding result from \cite{MacLane-homology}
that is needed for the last theorem of this section.
\begin{lem}
\label{lem:Mac p71 0xi iso split and 1xi is xi and dual statements on other side}\emph{\cite[p. 71]{MacLane-homology}}
Let $\xi_{0}$ denote the split short exact sequence $X\to X\oplus Z\to Z$
and let $\xi\colon X\overset{f}{\to}Y\overset{g}{\to}Z$ be an arbitrary
stable exact sequence. Let $0_{X}$ (respectively, $0_{Z}$) denote
the zero morphism in $\End_{\CA}X$ (respectively, $\End_{\CA}Z$).
Then $0_{X}\xi\tensor[_{X}]{\iso}{_{Z}}\xi_{0}$ and $1_{X}\xi=\xi$,
and $\xi0_{Z}\tensor[_{X}]{\iso}{_{Z}}\xi_{0}$ and $\xi1_{Z}=\xi.$
\end{lem}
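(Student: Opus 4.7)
The plan is to compute $a\xi$ directly from the pushout construction of Definition \ref{def:cocomplex a xi and xi c for a cocomplex xi} in the two cases $a=1_X$ and $a=0_X$; the statements about $\xi 1_Z$ and $\xi 0_Z$ will then follow by the evidently dual arguments for the pullback construction of $\xi b$. For $1_X\xi=\xi$, the square with vertices $X,Y,X,Y$ and morphisms $f,1_X,1_Y,f$ is trivially a pushout, so I would take $f'=f$ and $a'=1_Y$; then the uniqueness part of the universal property applied to the pair $0\colon X\to Z$ and $g\colon Y\to Z$ forces $g'=g$, giving the equality on the nose.

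The main computation is therefore showing $0_X\xi\tensor[_X]{\iso}{_Z}\xi_0$. The plan is to verify that
$$\begin{tikzcd}
X\arrow{r}{f}\arrow{d}[swap]{0_X} & Y\arrow{d}{\begin{psmallmatrix}g\\0\end{psmallmatrix}}\\
X\arrow{r}[swap]{\begin{psmallmatrix}0\\1_X\end{psmallmatrix}} & Z\oplus X
\end{tikzcd}$$
is a pushout. The essential ingredient is that $g=\cok f$ (which holds because $\xi$ is short exact), so any morphism $\alpha\colon Y\to W$ with $\alpha f=0$ factors uniquely through $g$; combining this factorisation with an arbitrary $\beta\colon X\to W$ yields the unique induced morphism $Z\oplus X\to W$ required by the pushout universal property. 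Feeding the data $0\colon X\to Z$ and $g\colon Y\to Z$ into the same universal property then forces $g'=(1_Z,0)\colon Z\oplus X\to Z$, so $0_X\xi$ is the split short exact sequence $X\xrightarrow{\begin{psmallmatrix}0\\1_X\end{psmallmatrix}}Z\oplus X\xrightarrow{(1_Z,0)}Z$. The swap isomorphism $v=\begin{psmallmatrix}0 & 1_Z\\ 1_X & 0\end{psmallmatrix}\colon X\oplus Z\to Z\oplus X$ then gives a morphism $(1_X,v,1_Z)\colon\xi_0\to 0_X\xi$ witnessing the desired equivalence.

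The remaining assertions $\xi 1_Z=\xi$ and $\xi 0_Z\tensor[_X]{\iso}{_Z}\xi_0$ are handled by the symmetric pullback argument, using that $f=\ker g$ to supply the dual factorisation property. I do not foresee a substantive obstacle: the proof is essentially an exercise in unpacking universal properties, and the abelian-category version in \cite[p.~71]{MacLane-homology} carries over unchanged because it only invokes the exactness of $\xi$. The one point meriting care is orienting the swap isomorphisms correctly so that $1_X$ and $1_Z$ remain on the end-terms, as required by the equivalence relation $\tensor[_X]{\iso}{_Z}$.
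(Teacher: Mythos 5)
Your proof is correct. The paper itself states this lemma without proof, deferring to Mac Lane \cite[p.~71]{MacLane-homology}, and your argument is essentially that classical computation transplanted to the preabelian setting: the identification of the pushout of $f$ along $0_X$ with $Z\oplus X$ uses only that $g=\cok f$ (so that any $\alpha$ with $\alpha f=0$ factors uniquely through the epimorphism $g$), which is exactly the point that makes the abelian argument carry over, and your swap isomorphism is correctly oriented so that the end-terms carry identities. The only cosmetic remark is that $1_X\xi=\xi$ and the pushout identification each hold only up to the canonical isomorphism of pushouts, i.e.\ as elements of $\Ext^1_{\CA}(Z,X)$, which is how the paper reads the equality anyway.
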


\begin{thm}
\label{thm:Ext^1 is a left-right EndX-EndZ bimodule in preabelian category}Let
$\CA$ be a preabelian category with objects $X,Z$, and suppose $\Ext_{\CA}^{1}(Z,X)$
is a set. Then $\Ext_{\CA}^{1}(Z,X)$ is an $(\End_{\CA}X,\End_{\CA}Z)$-bimodule.
\end{thm}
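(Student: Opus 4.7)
The plan is to define a left action of $\End_{\CA} X$ and a right action of $\End_{\CA} Z$ on $\Ext^{1}_{\CA}(Z,X)$ by the pushout/pullback constructions of Definition \ref{def:cocomplex a xi and xi c for a cocomplex xi}: for $a\in\End_{\CA}X$, $b\in\End_{\CA}Z$ and $[\xi]\in\Ext^{1}_{\CA}(Z,X)$, set $a\cdot[\xi]\deff[a\xi]$ and $[\xi]\cdot b\deff[\xi b]$. Since $\xi$ is stable, Definition \ref{def:stable exact sequence} ensures $a\xi$ and $\xi b$ are again short exact, and stability is itself preserved under pushout/pullback by the pushout-of-pushout (respectively pullback-of-pullback) argument, so the outputs lie in $\Ext^{1}_{\CA}(Z,X)$. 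First I would check well-definedness on $\tensor[_{X}]{\iso}{_{Z}}$-classes: an isomorphism $(1_{X},v,1_{Z})\colon\xi\to\xi'$ gives a commutative diagram that, via the universal property of the pushout $X'\bincoprod_{X}Y$ and its counterpart for $\xi'$, induces a unique isomorphism $a\xi\tensor[_{X'}]{\iso}{_{Z}}a\xi'$; the case of $\xi b$ is dual.

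Next I would verify that the left action makes $\Ext^{1}_{\CA}(Z,X)$ into a left $\End_{\CA}X$-module. The unital law $1_{X}\xi=\xi$ is recorded in Lemma \ref{lem:Mac p71 0xi iso split and 1xi is xi and dual statements on other side}. For associativity $(a_{1}a_{2})\xi\tensor[_{X}]{\iso}{_{Z}}a_{1}(a_{2}\xi)$, I would use the standard fact that the outer rectangle of two horizontally stacked pushout squares is again a pushout; comparing the two constructions then gives the required isomorphism of short exact sequences by the universal property. For additivity of the action, namely $0_{X}\xi\tensor[_{X}]{\iso}{_{Z}}\xi_{0}$, $(a_{1}+a_{2})\xi\tensor[_{X}]{\iso}{_{Z}}a_{1}\xi+a_{2}\xi$ and $a(\xi+\xi')\tensor[_{X}]{\iso}{_{Z}}a\xi+a\xi'$, I would follow the MacLane manipulations (\cite[pp.~70--71]{MacLane-homology}) using the codiagonal/diagonal identities $a\nabla_{X}=\nabla_{X}(a\oplus a)$ and $\Delta_{X}a=(a\oplus a)\Delta_{X}$, combined with the naturality formulas $(a\oplus a')(\xi\oplus\xi')\tensor[]{\iso}{}(a\xi)\oplus(a'\xi')$ and the associativity isomorphism of Theorem \ref{thm:C80 Thm 2 stable exact implies associativity ie bimodule structure for Ext} to slide the pushout across the Baer-sum construction $\nabla_{X}(\xi\oplus\xi')\Delta_{Z}$.

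The right $\End_{\CA}Z$-module axioms are proved by the dual argument, replacing pushouts by pullbacks, the codiagonal by the diagonal on $Z$, and using the other halves of Lemma \ref{lem:Mac p71 0xi iso split and 1xi is xi and dual statements on other side} and Theorem \ref{thm:C80 Thm 2 stable exact implies associativity ie bimodule structure for Ext}. Finally, the bimodule compatibility condition $(a\cdot[\xi])\cdot b=a\cdot([\xi]\cdot b)$, i.e.\ $(a\xi)b\tensor[_{X'}]{\iso}{_{Z'}}a(\xi b)$ for every stable $\xi$, is precisely Theorem \ref{thm:C80 Thm 2 stable exact implies associativity ie bimodule structure for Ext}, where the stability hypothesis is used in an essential way.

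The main obstacle will be the distributivity of the left action over the Baer sum, $a(\xi+\xi')\tensor[_{X}]{\iso}{_{Z}}a\xi+a\xi'$ and $(a_{1}+a_{2})\xi\tensor[_{X}]{\iso}{_{Z}}a_{1}\xi+a_{2}\xi$; abelian-categorically these are routine, but in the preabelian setting each commutation between pushout, direct sum, $\nabla_{X}$ and $\Delta_{Z}$ has to be justified by a fresh universal-property argument, and one must repeatedly invoke stability (via Theorem \ref{thm:C80 Thm 2 stable exact implies associativity ie bimodule structure for Ext}) to rearrange pushouts past pullbacks. Once these identities are established, the bimodule structure follows formally.
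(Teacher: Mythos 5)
Your proposal is correct and takes essentially the same route as the paper: the paper's proof simply cites Theorem \ref{thm:Ext^1 in preabelian cat is an abelian group} for the group structure, Lemma \ref{lem:Mac p71 0xi iso split and 1xi is xi and dual statements on other side} for the unit laws, Theorem \ref{thm:C80 Thm 2 stable exact implies associativity ie bimodule structure for Ext} for the compatibility $(a\xi)b\tensor[]{\iso}{}a(\xi b)$, and the Richman--Walker/MacLane machinery for the remaining module axioms, which is exactly the outline you give in expanded form. The only point worth flagging is that closure of the actions (i.e.\ that $a\xi$ and $\xi b$ are again \emph{stable} exact, not merely exact) is what the paper delegates to Richman--Walker's Theorem~8 rather than to an ad hoc pushout-of-pushout argument, but your sketch correctly identifies where stability is needed.
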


\begin{proof}
This follows from Theorem \ref{thm:Ext^1 in preabelian cat is an abelian group},
\cite[Thm. 4]{RichmanWalker-ext-in-preabelian-cats}, Lemma
\ref{lem:Mac p71 0xi iso split and 1xi is xi and dual statements on other side}
and Theorem \ref{thm:C80 Thm 2 stable exact implies associativity ie bimodule structure for Ext}.
\end{proof}

\section{\label{sec:Auslander-Reiten-theory-in-quasi-abelian-categories}Auslander-Reiten
theory in quasi-abelian categories}

In this section, after recalling the definitions of a semi-abelian
and a quasi-abelian category, we will explore some Auslander-Reiten
theory type results in connection with these categories. We remark
here that quasi-abelian categories carry a canonical exact structure:
a quasi-abelian category $\CA$ endowed with the class of all short
exact sequences forms an exact category in the sense of Quillen \cite{Quillen-higher-algebraic-k-theory-I}
(see \cite[Rem. 1.1.11]{Schneiders-quasi-abelian-cats-and-sheaves}).
Some Auslander-Reiten theory for exact categories was developed in
\cite{GabrielRoiter-reps-of-finite-dimensional-algebras}, but
our results are different in nature: we explore properties of the
morphisms involved in Auslander-Reiten sequences, whereas \cite{GabrielRoiter-reps-of-finite-dimensional-algebras}
focuses more on the existence and construction of such sequences.
See \cite{IyamaNakaokaPalu-Auslander-Reiten-theory-in-extriangulated-categories}
also.
\begin{defn}
\label{def:semi-abelian}\cite[p. 167]{Rump-almost-abelian-cats}
Let $\CA$ be a preabelian category. We call $\CA$ \emph{left semi-abelian}
if each morphism $f\colon A\to B$ factorises as $f=ip$ for some
monomorphism $i$ and cokernel $p$. Similarly, $\CA$ is said to
be \emph{right semi-abelian} if instead each morphism $f$ decomposes
as $f=ip$ with $i$ a kernel and $p$ some epimorphism. If $\CA$
is both left and right semi-abelian, then it is called \emph{semi-abelian}.
\end{defn}

Note that a preabelian category is semi-abelian if and only if, for
every morphism $f$, the parallel $\widetilde{f}$ (see \S\ref{subsec:Preabelian-categories})
of $f$ is \emph{regular}, i.e. both monic and epic (see \cite[pp. 167--168]{Rump-almost-abelian-cats}).
\begin{defn}
\label{def:quasi-abelian category}\cite[p. 168]{Rump-almost-abelian-cats}
Let $\CA$ be a preabelian category. We call $\CA$ \emph{left quasi-abelian}
if cokernels are stable under pullback (see Definition \ref{def:stable under PB (PO)}) in $\CA$.
If kernels are stable under pushout in $\CA$, then we call $\CA$ \emph{right
quasi-abelian}. If $\CA$ is left and right quasi-abelian, then $\CA$
is simply called \emph{quasi-abelian}.
\end{defn}

\begin{example}
Any abelian category is quasi-abelian.
\end{example}

\begin{example}
The category of \emph{Banach spaces}, i.e. complete normed vector
spaces, over $\BR$ or $\BC$ is quasi-abelian, but not abelian; see \cite[p. 214]{Rump-almost-abelian-cats}.
\end{example}

Quasi-abelian categories, as we define them here, were called `almost
abelian' categories in \cite{Rump-almost-abelian-cats},
but the terminology we adopt is the more widely accepted one. See
the `Historical remark' in \cite{Rump-counterexample-to-Raikov}
for more details.
\begin{rem}
\label{rem:quasi-abelian-is-semi-abelian-and-decomposition-of-morphism}Rump
shows that every left (respectively, right) quasi-abelian category
is left (respectively, right) semi-abelian (see \cite[p.129, Cor. 1]{Rump-almost-abelian-cats}).
Furthermore, in a left (respectively, right) semi-abelian category,
if a morphism $f$ factorises as $f=ip$ with $i$ monic and $p$
a cokernel (respectively, $i$ a kernel and $p$ epic), then $p=\coim f$
(respectively, $i=\image f$) up to unique isomorphism.
\end{rem}

Auslander and Reiten showed that irreducible and (minimal) left/right
almost split morphisms (introduced in \cite{AuslanderReiten-Rep-theory-of-Artin-algebras-IV})
play a large role in the study of almost split sequences (defined
in \cite{AuslanderReiten-Rep-theory-of-Artin-algebras-III})
in abelian categories. The same is true in the generality we consider
in this article, and we begin by recalling the definition of an irreducible
morphism.
\begin{defn}
\label{def:irreducible morphism}\cite[\S 2]{AuslanderReiten-Rep-theory-of-Artin-algebras-IV}
A morphism $f\colon X\to Y$ of an arbitrary category is \emph{irreducible}
if the following conditions are satisfied:\begin{enumerate}[(i)]
\item $f$ is not a section;
\item $f$ is not a retraction; and
\item if $f=hg$, for some $g\colon X\to Z$ and $h\colon Z\to Y$, then either $h$ is a retraction or $g$ is a section.
\end{enumerate}
\end{defn}

For the results presented here that are analogues of those in known
work, we omit the proofs that carry over or that are easy generalisations.
Instead, we focus on those arguments that need significant modification
or that have been omitted in previous work. Furthermore, many of the
results in the remainder of the article have duals, which we state
but do not prove.

The next proposition is a version of \cite[Prop. 2.6 (a)]{AuslanderReiten-Rep-theory-of-Artin-algebras-IV}
for the semi-abelian setting. Recall that a monomorphism (respectively,
epimorphism) that is not an isomorphism is called a \emph{proper monomorphism} (respectively, \emph{proper epimorphism}).
\begin{prop}
\label{prop:Left or right semi-abelian implies irred map is either mono or epi}Suppose
a category $\CA$ is left or right semi-abelian. If $f\colon X\to Y$
is irreducible in $\CA$, then it is a proper monomorphism or a proper
epimorphism.
\end{prop}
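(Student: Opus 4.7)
The plan is to exploit the semi-abelian factorisation of $f$ together with clause (iii) of the definition of irreducibility. First I would note that an irreducible morphism is, by clauses (i) and (ii), neither a section nor a retraction, and hence in particular not an isomorphism; therefore it suffices to show that $f$ is a monomorphism or an epimorphism, after which properness is automatic.

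Suppose first that $\CA$ is left semi-abelian, so that $f$ admits a decomposition $f=ip$ with $i$ a monomorphism and $p$ a cokernel. Applying clause (iii) of irreducibility to this factorisation yields two cases. If $p$ is a section, then $p$ is both monic (as a section) and a cokernel, hence an isomorphism by Lemma \ref{lem:epic kernel or monic cokernel is an isomorphism}; consequently $f=ip$ inherits the monomorphism property of $i$. If instead $i$ is a retraction, then both factors are epimorphisms (the former as a retraction, the latter as a cokernel), so the composite $f$ is an epimorphism.

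The right semi-abelian case is entirely dual: one factors $f=ip$ with $i$ a kernel and $p$ an epimorphism, and clause (iii) gives either that $i$ is a retraction, in which case $i$ is an epic kernel and so an isomorphism by Lemma \ref{lem:epic kernel or monic cokernel is an isomorphism} (whence $f$ is epic because $p$ is), or that $p$ is a section, in which case both factors are monic and so $f$ is monic. In every subcase $f$ is a monomorphism or an epimorphism, and since $f$ cannot be an isomorphism the desired conclusion follows. There is no serious obstacle here; the main thing to get right is to match each semi-abelian factorisation to the correct half of Lemma \ref{lem:epic kernel or monic cokernel is an isomorphism}, so that the non-essential factor is forced to be invertible and the property of the other factor propagates to $f$.
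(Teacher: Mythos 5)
Your proof is correct and follows essentially the same route as the paper's: factor $f$ via the semi-abelian decomposition, apply clause (iii) of irreducibility to that factorisation, and propagate monicity or epicity to $f$, with properness following because an irreducible morphism is never an isomorphism. The only difference is cosmetic: you invoke Lemma \ref{lem:epic kernel or monic cokernel is an isomorphism} to upgrade the section (resp.\ retraction) factor to an isomorphism, whereas the paper only needs the weaker facts that a section is monic and a retraction is epic, so that extra step is harmless but unnecessary.
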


\begin{proof}
Suppose $f\colon X\to Y$ is an irreducible morphism with coimage
$\coim f\colon X\onto M.$ Note that $f$ cannot be an isomorphism
since it is not, for example, a section.

Suppose now that $\CA$ is left semi-abelian. Then we have a factorisation
$f=i\circ\coim f$ where $i$ is monic (see Remark \ref{rem:quasi-abelian-is-semi-abelian-and-decomposition-of-morphism}).
If $f$ is a proper monomorphism then we are done, so suppose not.
Then $f=i\circ\coim f$ is irreducible implies $i$ is a retraction
or $\coim f$ is a section. The latter implies $\coim f$ is monic
and this in turn yields that $f$ is monic, which is contrary to our
assumption that $f$ is not a proper monomorphism. Thus, $i$ must
be a retraction and hence an epimorphism. Then $f$ is the composition
of two epimorphisms and is thus epic itself, i.e. $f$ is a proper
epimorphism.

The case when $\CA$ is right semi-abelian is proved similarly.
\end{proof}
In an abelian category, we have that a morphism $f$ is an isomorphism
if and only if it is regular. Therefore, in an abelian setting an
irreducible morphism is either a proper monomorphism or a proper epimorphism,
but never both simultaneously. However, this need not hold in an arbitrary
category. In particular, we will see in Example \ref{exa:Q is A_3 R is P_1 oplus P_2}
irreducible morphisms (so non-isomorphisms) that are regular.

The following two results give a version of \cite[Prop. 2.7]{AuslanderReiten-Rep-theory-of-Artin-algebras-IV}
in a more general setting. For the first proposition, we only really
need that the category $\CA$ is right semi-abelian and left quasi-abelian,
but by \cite[Prop. 3]{Rump-almost-abelian-cats} this
is equivalent to $\CA$ being (left and right) quasi-abelian because
left quasi-abelian implies left semi-abelian. Dually, for the second
result we only require that $\CA$ is left semi-abelian and right
quasi-abelian. The proof we give is inspired by that of Auslander
and Reiten; however, since regular morphisms may not be isomorphisms
or, for example, monomorphisms may not be kernels in the categories
we are dealing with, we must consider some different short exact sequences
in the proof.
\begin{prop}
\label{prop:ASS Lem IV.1.7(a) or AR IV Prop 2.7 (a) for quasi-abelian category - criterion for irreducible if f is a kernel}Suppose
$\CA$ is a quasi-abelian category and that $f\colon X\to Y$ is a
morphism in $\CA$ with cokernel $c\colon Y\to C$. If $f$ is irreducible,
then for all $v\colon V\to C$ either there exists $v_{1}\colon V\to Y$
such that $cv_{1}=v$ or there exists $v_{2}\colon Y\to V$ such that
$c=vv_{2}$. Furthermore, if $X\overset{f}{\to}Y\overset{c}{\to}C$
is a non-split short exact sequence, then the converse also holds.
\end{prop}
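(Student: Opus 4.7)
My plan is to argue both directions using pullbacks and the canonical factorisation of a morphism through its image, exploiting the quasi-abelian stability axioms. For the forward direction, given $v\colon V\to C$, I would first reduce to the case $f=\ker c$: since $cf=0$, write $f=k\alpha$ with $k\deff\ker c$ and $\alpha\colon X\to\Ker c$ induced by the universal property of the kernel. Applying irreducibility to $f=k\alpha$ yields either $k$ a retraction---hence an isomorphism (being also monic), so $C\iso 0$ and $v_1\deff 0$ works trivially---or $\alpha$ a section; in the latter case, Lemma~\ref{lem:ker is ker of its cok - cok is cok of its ker} gives $k=\image f$, so $\alpha=\coim f$ is epic and, being both a section and epic, is an isomorphism. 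Hence $f\iso\ker c$. I would then form the pullback of $c$ along $v$, obtaining $v'\colon P\to Y$ and $c'\colon P\to V$; by the left quasi-abelian axiom $c'$ is a cokernel, and the induced map $\tilde f\colon X\to P$ with $v'\tilde f=f$ and $c'\tilde f=0$ coincides with $\ker c'$, so $X\overset{\tilde f}{\to}P\overset{c'}{\to}V$ is short exact. Irreducibility applied to $f=v'\tilde f$ then forces either $v'$ to be a retraction---in which case, for a section $s$ of $v'$, $v_2\deff c's$ satisfies $vv_2=cv's=c$---or $\tilde f$ to be a section, where the sequence splits by Proposition~\ref{prop:Splitting-Lemma in additive category}, yielding $P\iso X\oplus V$ with $v'$ of the form $\begin{psmallmatrix}f & w\end{psmallmatrix}$ for some $w\colon V\to Y$; the pullback identity $cv'=vc'$ read off on the $V$-component then gives $cw=v$, so $v_1\deff w$ works.

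For the backward direction, suppose $X\overset{f}{\to}Y\overset{c}{\to}C$ is a non-split short exact sequence (so $f=\ker c$) with the factorisation property. Lemma~\ref{lem:non-split short exact sequence implies neither map is section or retraction} supplies conditions (i) and (ii) of Definition~\ref{def:irreducible morphism}. For (iii), let $f=hk$ with $k\colon X\to W$ and $h\colon W\to Y$. I would apply the factorisation property to $v=ch\colon W\to C$: if $c=chv_2$ for some $v_2\colon Y\to W$, then $c(1_Y-hv_2)=0$ and $f=\ker c$ yield $1_Y=h(v_2+ku')$ for some $u'\colon Y\to X$, so $h$ is a retraction. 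Otherwise $ch=cv_1$ for some $v_1\colon W\to Y$, and I would then invoke the factorisation property on the induced morphism $\bar v\colon\Cok k\to C$ satisfying $\bar vp=ch$ (where $p\deff\cok k$, which exists since $chk=cf=0$): if $\bar v=cv_1'$ for some $v_1'\colon\Cok k\to Y$, then $v_1'p-h=fu$ for some $u\colon W\to X$, so $hk=-fuk$ (using $pk=0$); combined with $hk=f$, monicity of $f$ forces $uk=-1_X$, and $-u$ is a retraction of $k$.

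The hard part will be the remaining combined case---where both $ch=cv_1$ and $c=\bar v v_2'$ hold simultaneously---in which neither application of the factorisation property directly delivers the required section or retraction. To handle it, I would exploit both relations in tandem: the identities $\bar vv_2'f=0$ and $\bar v(v_2'h-p)=0$ express the failure of $\bar v$ to be monic, while the epicness of $c=\bar vv_2'$ forces $\bar v$ to be epic. Using Lemma~\ref{lem:epic kernel or monic cokernel is an isomorphism} together with the preabelian structure and the fact that $v_1=h+fu$ factors explicitly through $h$ modulo $f$, I would aim to show that $\bar v$ is in fact an isomorphism, identifying $\Cok k$ with $C$ and thereby reducing the situation to the previously handled sub-case in which $\bar v$ factors through $c$. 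Making this gluing precise in the quasi-abelian setting---where monomorphisms need not be kernels and regular morphisms need not be isomorphisms---is where the main technical effort lies.
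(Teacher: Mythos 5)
Your forward direction is correct and is essentially the paper's own argument: both pull back $c$ along $v$, use stability of cokernels under pullback to make the second projection of the pullback a cokernel, and apply irreducibility of $f$ to its factorisation through the pullback. Your preliminary reduction to $f=\ker c$ is harmless, though the induced map $\alpha$ is $\tilde f\circ\coim f$ rather than $\coim f$ itself; it is epic because parallels are regular in a semi-abelian category, which is all you need. In the converse, your treatment of the sub-cases where $c=(ch)v_2$ and where $\bar v=cv_1'$ is also correct, and the direct computations there (using only that $f=\ker c$ and that $f$ is monic) are a pleasant elementary substitute for part of the paper's argument.

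The gap is exactly the case you flag, $c=\bar v v_2'$ with $v_2'\colon Y\to\Cok k$, and your proposed repair cannot work: $\bar v$ is epic (since $c$ is), but it is in general not monic, so it is not an isomorphism. For instance, take $W=X\oplus Y$, $k=\begin{psmallmatrix}1_X\\0\end{psmallmatrix}$ and $h=(\,f\;\,1_Y\,)$; then $\cok k$ identifies $\Cok k$ with $Y$ and $\bar v$ with $c$, which is monic only when $X=0$, yet this configuration lies squarely in your problematic case (with $v_2'=1_Y$). What is actually needed---and what the paper uses---is that the commutative square with horizontal arrows $\cok k\colon W\to\Cok k$ and $c\colon Y\to C$ and vertical arrows $h$ and $\bar v$ is a \emph{pullback}. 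This rests on two facts about quasi-abelian categories: since $hk=f=\ker c$ is a kernel, $k$ is itself a kernel by \cite[Prop. 2]{Rump-almost-abelian-cats} (right semi-abelianness), so $X\overset{k}{\to}W\overset{\cok k}{\to}\Cok k$ is short exact; and then \cite[Prop. 5]{Rump-almost-abelian-cats} shows the square is exact, in particular a pullback. Applying the pullback property to the pair $(1_Y,v_2')$, which satisfies $c\circ 1_Y=\bar v\circ v_2'$, produces $b\colon Y\to W$ with $hb=1_Y$, so $h$ is a retraction. (The same pullback applied to $(v_1',1_{\Cok k})$ also disposes of the sub-case where $\bar v$ factors through $c$, giving a section of $\cok k$ and hence, by the Splitting Lemma, that $k$ is a section; so once the pullback is available, your two-stage application of the hypothesis and the separate treatment of $c=(ch)v_2$ become unnecessary.)
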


\begin{proof}
First, suppose that $f\colon X\to Y$ is irreducible and that $v\colon V\to C$
is arbitrary. Since $\CA$ is quasi-abelian we may consider the following
commutative diagram$$\begin{tikzcd} L \arrow[equal]{d} \arrow{r}{\ker a} & E \arrow{r}{a} \arrow{d}{u} \PB{dr}& V \arrow{d}{v} \\
L \arrow{r}{\im f} & Y \arrow{r}{c} & C \\
X \arrow[two heads]{u}{p} \arrow{ur}{f}& & \end{tikzcd}$$where $E$ is the pullback of $c$ along $v$, $f=(\im f)\circ p$ with
$p$ epic, and $a$ is a cokernel since cokernels are stable under
pullback in (left) quasi-abelian categories. Then $f=(\im f)\circ p=u\circ((\ker a)p)$,
so either $(\ker a)p$ is a section or $u$ is a retraction as $f$
is irreducible.

If $(\ker a)p$ is a section then there exists $v_{1}'\colon E\to X$
such that $v_{1}'(\ker a)p=1_{X}$. Then $pv_{1}'(\ker a)p=p=1_{L}p$,
so $pv_{1}'\ker a=1_{L}$ as $p$ is epic. Since $a$ is a cokernel,
we have $a=\cok(\ker a)$ by Lemma \ref{lem:ker is ker of its cok - cok is cok of its ker}.
Therefore, $\ker a$ is a section implies $a$ is a retraction by
the Splitting Lemma (Proposition \ref{prop:Splitting-Lemma in additive category}).
That is, there exists $v_{1}''\colon V\to E$ such that $av_{1}''=1_{V}$.
Now define $v_{1}\deff uv_{1}''$ and note that $cv_{1}=cuv_{1}''=vav_{1}''=v$.
Otherwise, $u$ is a retraction and so there exists $v_{2}'\colon Y\to E$
with $uv_{2}'=1_{Y}$. Setting $v_{2}\deff av_{2}'$ we see that $vv_{2}=vav_{2}'=cuv_{2}'=c$.
This concludes the proof of the first statement.

For the converse, we assume that $X\overset{f}{\to}Y\overset{c}{\to}C$
is a non-split short exact sequence and, further, that for all $v\colon V\to C$
either there exists $v_{1}\colon V\to Y$ such that $cv_{1}=v$ or
there exists $v_{2}\colon Y\to V$ such that $c=vv_{2}$. Then $f$
is not a section or a retraction, by Lemma \ref{lem:non-split short exact sequence implies neither map is section or retraction},
as $X\overset{f}{\to}Y\overset{c}{\to}C$ is non-split. It remains
to show part (iii) of Definition \ref{def:irreducible morphism}.
To this end, suppose $f=hg$ for some $g\colon X\to U$ and $h\colon U\to Y$.
Since $hg=f=\ker c$ is a kernel, $g$ is also a kernel by \cite[Prop. 2]{Rump-almost-abelian-cats}
as $\CA$ is quasi-abelian and so, in particular, right semi-abelian.
Thus, $g=\ker(\cok g)$ (by Lemma \ref{lem:ker is ker of its cok - cok is cok of its ker})
and $\begin{tikzcd}[column sep=1.1cm]X\arrow{r}{g}& U\arrow{r}{\cok g}& V\end{tikzcd}$
is short exact. Consider the commutative diagram$$\begin{tikzcd}[column sep=1.3cm] X \arrow[equal]{d} \arrow{r}{g} & U \arrow{r}{\cok g} \arrow{d}{h} & V \arrow{d}{v} \\
X \arrow{r}{f} & Y \arrow{r}{c} & C \end{tikzcd}$$where $v$ exists since $(ch)g=cf=0$. Since $1_{X}$ is an isomorphism,
$\cok g$ and $c$ are cokernels, and $\CA$ is quasi-abelian, by
\cite[Prop. 5]{Rump-almost-abelian-cats} we have that
the right square is \emph{exact}, i.e. simultaneously a pullback and
a pushout. Then, by assumption, either there exists $v_{1}\colon V\to Y$
such that $cv_{1}=v$ or there exists $v_{2}\colon Y\to V$ such that
$c=vv_{2}$. In the first case, we have the following situation$$\begin{tikzcd}[column sep=1.3cm] V\arrow[bend right]{ddr}[swap]{v_1} \arrow[bend left]{drr}{1_V}\arrow[dotted]{dr}{\exists !a}&&\\
&U \arrow{r}{\cok g} \arrow{d}[swap]{h} \exact{dr}& V \arrow{d}{v}\\
&Y \arrow{r}[swap]{c} & C \end{tikzcd}$$since $cv_{1}=v=v1_{V}$, and hence there exists (a unique) $a\colon V\to U$
such that $(\cok g)\circ a=1_{V}$ (and $v_{1}=ha$). Thus, $\cok g$
is a retraction and by the Splitting Lemma (Proposition \ref{prop:Splitting-Lemma in additive category})
we have that $g$ is a section. Otherwise, in the case where $v_{2}$
exists, we have that there is (a unique) $b\colon Y\to U$ such that
$hb=1_{Y}$ (and $(\cok g)\circ b=v_{2}$), in which case $h$ is
seen to be a retraction. The following diagram summarises this case.$$\begin{tikzcd}[column sep=1.3cm] Y\arrow[bend right]{ddr}[swap]{1_Y} \arrow[bend left]{drr}{v_2}\arrow[dotted]{dr}{\exists !b}&&\\
&U \arrow{r}{\cok g} \arrow{d}[swap]{h} \exact{dr}& V \arrow{d}{v}\\
&Y \arrow{r}[swap]{c} & C \end{tikzcd}$$Therefore, $f$ is irreducible and the proof is complete.
\end{proof}
The dual statement is as follows.
\begin{prop}
\label{prop:ASS Lem IV.1.7(b) or AR IV Prop 2.7 (b) for quasi-abelian category - criterion for irreducible if f is a cokernel}Suppose
$\CA$ is a quasi-abelian category. Suppose $f\colon X\to Y$ is a
morphism in $\CA$ with kernel $\ker f\colon \Ker f\to X$. If $f$ is irreducible,
then for all $u\colon \Ker f\to U$ either there exists $u_{1}\colon X\to U$
such that $u_{1}\ker f=u$ or there exists $u_{2}\colon U\to X$ such that
$\ker f=u_{2}u$. Furthermore, if $\begin{tikzcd}\Ker f \arrow{r}{\ker f}& X \arrow{r}{f}& Y\end{tikzcd}$ is a non-split short exact sequence, then the converse also holds.
\end{prop}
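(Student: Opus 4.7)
The plan is to dualise the proof of Proposition~\ref{prop:ASS Lem IV.1.7(a) or AR IV Prop 2.7 (a) for quasi-abelian category - criterion for irreducible if f is a kernel} term by term: pushouts will replace pullbacks, the right quasi-abelian hypothesis (kernels stable under pushout) will replace the left quasi-abelian hypothesis, and the left semi-abelian factorisation $f=i\circ\coim f$ with $i$ monic will replace the right semi-abelian factorisation $f=(\image f)\circ p$ with $p$ epic; the Splitting Lemma (Proposition~\ref{prop:Splitting-Lemma in additive category}) is self-dual.

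For the forward direction, given an arbitrary $u\colon\Ker f\to U$, I would form the pushout of $\ker f$ along $u$, producing morphisms $u'\colon X\to P$ and $k'\colon U\to P$, where $k'$ is a kernel because $\CA$ is right quasi-abelian. The pushout universal property applied to $f$ and $0\colon U\to Y$ then yields a unique $h\colon P\to Y$ with $hu'=f$ and $hk'=0$. Since cokernels are preserved under pushout, one has $\Cok k'\iso\Cok(\ker f)=\Coim f$, and combining this identification with the left semi-abelian factorisation gives $f=i\circ(\cok k')\circ u'$ with $i$ monic. Irreducibility of $f=(i\circ\cok k')\circ u'$ then supplies two cases. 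If $u'$ is a section, any retraction $\alpha\colon P\to X$ of $u'$ yields $u_{2}\deff\alpha\circ k'$, for which $u_{2}\circ u=\alpha\circ k'\circ u=\alpha\circ u'\circ\ker f=\ker f$, using $k'\circ u=u'\circ\ker f$ from the pushout square. If instead $i\circ\cok k'$ is a retraction, then $i$ is a monic retraction, hence an isomorphism, so $\cok k'$ is also a retraction; since $k'=\ker(\cok k')$ by Lemma~\ref{lem:ker is ker of its cok - cok is cok of its ker}, the Splitting Lemma forces $k'$ to be a section, and any retraction $\delta$ of $k'$ delivers $u_{1}\deff\delta\circ u'$ satisfying $u_{1}\circ\ker f=\delta\circ k'\circ u=u$.

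For the converse, non-splitness together with Lemma~\ref{lem:non-split short exact sequence implies neither map is section or retraction} gives that $f$ is neither a section nor a retraction, so it remains only to verify condition~(iii) of Definition~\ref{def:irreducible morphism}. Given any factorisation $f=hg$ with $g\colon X\to U$ and $h\colon U\to Y$, the fact that $f=\cok(\ker f)$ is a cokernel allows us to apply the dual of \cite[Prop.~2]{Rump-almost-abelian-cats} to conclude that $h$ is a cokernel; in particular, $\Ker h\overset{\ker h}{\to}U\overset{h}{\to}Y$ is short exact. The morphism $w\colon\Ker f\to\Ker h$ induced from $h\circ g\circ\ker f=f\circ\ker f=0$ then fits into a ladder whose left square is exact (both a pullback and a pushout) by the dual of \cite[Prop.~5]{Rump-almost-abelian-cats}. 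Applying the hypothesis to $w$ yields either $u_{1}\colon X\to\Ker h$ with $u_{1}\circ\ker f=w$, in which case the pushout property applied to $u_{1}$ and $1_{\Ker h}$ produces a retraction of $\ker h$ (so $h$ is a retraction by the Splitting Lemma), or $u_{2}\colon\Ker h\to X$ with $u_{2}\circ w=\ker f$, in which case the pushout property applied to $1_{X}$ and $u_{2}$ produces a retraction of $g$ (so $g$ is a section).

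The most delicate bookkeeping will be confirming the identification $\Cok k'\iso\Coim f$ used to set up the factorisation in the forward direction and invoking the correct duals of the structural results from \cite{Rump-almost-abelian-cats}; once these are in place, the argument mirrors that of Proposition~\ref{prop:ASS Lem IV.1.7(a) or AR IV Prop 2.7 (a) for quasi-abelian category - criterion for irreducible if f is a kernel} essentially verbatim.
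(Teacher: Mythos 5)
Your proposal is correct: the paper itself offers no separate argument for this proposition, stating it only as the dual of Proposition \ref{prop:ASS Lem IV.1.7(a) or AR IV Prop 2.7 (a) for quasi-abelian category - criterion for irreducible if f is a kernel}, and your term-by-term dualisation (pushouts for pullbacks, kernels stable under pushout, the left semi-abelian factorisation $f=i\circ\coim f$, and the identification $\Cok k'\iso\Coim f$) carries that proof over faithfully. The case analysis in both directions lands on the correct alternatives, so this is essentially the same approach made explicit.
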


Let $\ring$ be a commutative (unital) ring and suppose $\CA$ is a \emph{$\ring$-category}, i.e. an additive category in which the set of morphisms
between any two objects is a $\ring$-module and composition of morphisms
is $\ring$-bilinear. The \emph{radical} $\rad_{\CA}(-,-)$ of a $\ring$-category
is the (two-sided) ideal of $\CA$ defined by 
\[
\rad_{\CA}(X,Y)\deff\{f\in\Hom_{\CA}(X,Y)\mid1_{X}-gf\text{ is invertible for all }g\colon Y\to X\}
\]
for any two objects $X,Y\in\CA$. By a \emph{radical} morphism $f\colon X\to Y$,
we mean an element of $\rad_{\CA}(X,Y)$. Furthermore, $\rad_{\CA}(X,X)\subseteq\End_{\CA}X$
coincides with the Jacobson radical $J(\End_{\CA}X)$ of the ring $\End_{\CA}X$. For $n\in\BZ_{>0}$, $\rad_{\CA}^{n}(X,Y)$ denotes
the $\ring$-submodule of $\Hom_{\CA}(X,Y)$ generated by morphisms that
are a composition of $n$ radical morphisms. See \cite{Kelly-radical-of-category}
or \cite[\S 2]{Krause-KS-cats-and-projective-covers} for more details.

The next two propositions together give a combined version of \cite[Lem. 3.8]{AuslanderSmalo-preprojective-modules}
and \cite[Lem. IV.1.9]{AssemSimsonSkowronski-Vol1} valid for any $\ring$-category.
We provide a full proof as the proof of the corresponding result for
module categories is omitted in \cite{AuslanderSmalo-preprojective-modules}.
We also note that the equivalence of \eqref{itm:f non-section} and
\eqref{itm:Im(Hom(f,X) contained in radEndX} in each statement below
has already appeared in the proof of \cite[Thm. 2.4]{AuslanderReiten-Rep-theory-of-Artin-algebras-IV}
in the setting of an additive category with split idempotents.
\begin{prop}
\label{lem:AuSm80 Lem 3.8 for additive cat - EndX local, TFAE for f X to Y (i) f non-section (ii) f radical (iii) Im(Hom(f,X)) contained in radEndX=00003Drad(X,X) (iv) for all Z Im(Hom(f,Z)) contained in rad(X,Z)}Let
$f\colon X\to Y$ be a morphism in a $\ring$-category $\CA$, where $\End_{\CA}X$
is a local ring. Then the following are equivalent: 
\emph{\begin{enumerate}[(i)]
\item\label{itm:f non-section} $f$ \emph{is not a section;}
\item $f\in\rad_\CA(X,Y)$\emph{;}
\item\label{itm:Im(Hom(f,X) contained in radEndX} \emph{$\Image(\Hom_\CA(f,X))\subseteq J(\End_\CA X)=\rad_\CA(X,X)$; and}
\item \emph{for all $Z\in\CA$, the image $\Image(\Hom_\CA(f,Z))$ of the map $\Hom_\CA(f,Z)\colon\Hom_\CA(Y,Z)\to \Hom_\CA(X,Z)$ is contained in $\rad_\CA(X,Z)$.}
\end{enumerate}}
\end{prop}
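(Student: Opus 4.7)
The plan is to establish the equivalences by proving the cyclic chain of implications (i) $\Rightarrow$ (ii) $\Rightarrow$ (iv) $\Rightarrow$ (iii) $\Rightarrow$ (i), exploiting the fact that $\End_\CA X$ being local means its Jacobson radical $J(\End_\CA X)$ coincides with the set of non-units, and that $\rad_\CA(-,-)$ is a two-sided ideal of $\CA$.

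For (i) $\Rightarrow$ (ii), I would take an arbitrary $g\colon Y\to X$ and show that $gf$ lies in $J(\End_\CA X)$. Since $\End_\CA X$ is local, it suffices to check that $gf$ is not a unit. But if $gf$ were invertible in $\End_\CA X$ with inverse $h$, then $(hg)f=1_X$ would witness $f$ as a section, contradicting (i). Hence $gf\in J(\End_\CA X)$, so $1_X-gf$ is invertible, and thus $f\in\rad_\CA(X,Y)$ by definition of the radical.

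For (ii) $\Rightarrow$ (iv), I would invoke that the radical is a two-sided ideal in $\CA$ (as recalled just above the statement): for any $h\colon Y\to Z$, the element $\Hom_\CA(f,Z)(h)=hf$ is then the composite of a radical morphism with an arbitrary morphism, hence radical. The implication (iv) $\Rightarrow$ (iii) is then immediate by taking $Z=X$ and using $\rad_\CA(X,X)=J(\End_\CA X)$. Finally, for (iii) $\Rightarrow$ (i), I would argue by contrapositive: if $f$ were a section, some $g\colon Y\to X$ would satisfy $gf=1_X$, giving $1_X=\Hom_\CA(f,X)(g)\in J(\End_\CA X)$; but $J(\End_\CA X)$ is a proper ideal of the (local, hence non-trivial) ring $\End_\CA X$, so this is a contradiction.

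There is no real obstacle here: the argument is a routine use of the local-ring hypothesis and the ideal property of $\rad_\CA$. The only place to be slightly careful is in (i) $\Rightarrow$ (ii), where one must remember that $\Hom_\CA(f,X)$ sends $g\mapsto gf$ (not $fg$), so that an inverse to $gf$ in $\End_\CA X$ genuinely produces a left inverse of $f$; otherwise the chain is mechanical.
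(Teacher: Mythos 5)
Your proposal is correct and follows essentially the same route as the paper: the same cycle of implications, with (i) $\Rightarrow$ (ii) resting on the fact that in the local ring $\End_{\CA}X$ the non-invertible elements form the Jacobson radical, and (ii) $\Rightarrow$ (iv) resting on $\rad_{\CA}$ being a two-sided ideal. The only cosmetic difference is that the paper first disposes of the case $f=0$ separately, which your uniform argument renders unnecessary.
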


\begin{proof}
First, assume $f=0$. If $f$ were a section then we would have $1_{X}=gf=0$
for some $g\colon Y\to X$, but this is impossible as $\End_{\CA}X$
is local so $X\neq0$. Thus, $f$ is not a section and (i) holds true.
Furthermore, for any $Z\in\CA$, we have $\Image(\Hom_{\CA}(f,Z))=0$
if $f=0$, and so is contained in $\rad_{\CA}(X,Z)$. That is, (iii)
and (iv) are satisfied. Since $\rad_{\CA}$ is an ideal of $\CA$,
$f=0\in\rad_{\CA}(X,Y)$ and (ii) also holds in this case.

Therefore, we may now assume $f\neq0$. It is clear that (iv) implies
(iii). 

(iii) $\Rightarrow$ (i). If $f$ is a section, then there exists
$g\colon Y\to X$ with $\Hom_{\CA}(f,X)(g)=gf=1_{X}\in\Image(\Hom_{\CA}(f,X))\backslash J(\End_{\CA}X)$,
so $\Image(\Hom_{\CA}(f,X))\nsubseteq J(\End_{\CA}X)$.

(i) $\Rightarrow$ (ii). Suppose $f$ is not a section. Since $\End_{\CA}X$
is local, $\rad_{\CA}(X,X)=J(\End_{\CA}X)$ is the set of all non-left
invertible elements of $\End_{\CA}X$. Let $g\colon Y\to X$ be arbitrary
and consider $gf\colon X\to X$. Notice that $gf$ cannot have a left
inverse because we are assuming $f$ is not a section. Therefore,
$gf\in\rad_{\CA}(X,X)$ and $1_{X}-gf$ is invertible. This is precisely
the requirement for $f$ to be radical.

(ii) $\Rightarrow$ (iv). Suppose $f\in\rad_{\CA}(X,Y)$, and let
$Z\in\CA$ and $g\colon Y\to Z$ be arbitrary. Since $\rad_{\CA}$
is an ideal of $\CA$, we immediately see that $\Hom_{\CA}(f,Z)(g)=gf\in\rad_{\CA}(X,Z)$
and so $\Image(\Hom_{\CA}(f,Z))\subseteq\rad_{\CA}(X,Z)$.
\end{proof}
\begin{prop}
\label{lem:AuSm80 Lem 3.8 for additive cat - EndY local, TFAE for f X to Y (i) f non-retraction (ii) f radical (iii) Im(Hom(Y,f)) contained in radEndY=00003Drad(Y,Y) (iv) for all Z Im(Hom(Z,f)) contained in rad(Z,Y)}Let
$f\colon X\to Y$ be a morphism in a $\ring$-category $\CA$, where $\End_{\CA}Y$
is a local ring. Then the following are equivalent:\emph{\begin{enumerate}[(i)]
\item $f$ \emph{is not a retraction;}
\item $f\in\rad_\CA(X,Y)$\emph{;}
\item $\Image(\Hom_\CA(Y,f))\subseteq J(\End_\CA Y)=\rad_\CA(Y,Y)$\emph{; and}
\item \emph{for all $Z\in\CA$, the image $\Image(\Hom_\CA(Z,f))$ of the map $\Hom_\CA(Z,f)\colon\Hom_\CA(Z,X)\to \Hom_\CA(Z,Y)$ is contained in $\rad_\CA(Z,Y)$.}
\end{enumerate}}
\end{prop}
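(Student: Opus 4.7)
The plan is to prove this proposition by dualising the argument for Proposition \ref{lem:AuSm80 Lem 3.8 for additive cat - EndX local, TFAE for f X to Y (i) f non-section (ii) f radical (iii) Im(Hom(f,X)) contained in radEndX=00003Drad(X,X) (iv) for all Z Im(Hom(f,Z)) contained in rad(X,Z)}, since the roles of source and target (and of section and retraction) are perfectly symmetric once one recalls that $\rad_{\CA}(X,Y)$ admits a ``right-handed'' characterisation as well (i.e.\ $f\in\rad_{\CA}(X,Y)$ iff $1_{Y}-fg$ is invertible for all $g\colon Y\to X$). I would carry this out as follows.

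First I would dispose of the case $f=0$: since $\End_{\CA}Y$ is local we have $Y\neq0$, so $f$ cannot be a retraction; and $f=0\in\rad_{\CA}(X,Y)$ because $\rad_{\CA}$ is an ideal of $\CA$, while $\Image(\Hom_{\CA}(Z,f))=0\subseteq\rad_{\CA}(Z,Y)$ for every $Z$. Hence (i)--(iv) all hold in this case, and we may henceforth assume $f\neq0$.

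Next I would prove the circle (iv) $\Rightarrow$ (iii) $\Rightarrow$ (i) $\Rightarrow$ (ii) $\Rightarrow$ (iv). The implication (iv) $\Rightarrow$ (iii) is immediate on taking $Z=Y$. For (iii) $\Rightarrow$ (i), I would argue contrapositively: if $f$ were a retraction then some $g\colon Y\to X$ would satisfy $\Hom_{\CA}(Y,f)(g)=fg=1_{Y}$, so $1_{Y}\in\Image(\Hom_{\CA}(Y,f))\setminus J(\End_{\CA}Y)$. For (i) $\Rightarrow$ (ii), I would use that $\End_{\CA}Y$ is local so that $J(\End_{\CA}Y)=\rad_{\CA}(Y,Y)$ coincides with the set of non-right-invertible elements of $\End_{\CA}Y$; given any $g\colon Y\to X$ the composite $fg$ cannot have a right inverse (else $f$ would be a retraction), so $fg\in J(\End_{\CA}Y)$ and hence $1_{Y}-fg$ is invertible. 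This places $f$ in the radical, using the right-handed characterisation. Finally, for (ii) $\Rightarrow$ (iv), if $f\in\rad_{\CA}(X,Y)$ and $g\colon Z\to X$ is arbitrary, then $\Hom_{\CA}(Z,f)(g)=fg\in\rad_{\CA}(Z,Y)$ because $\rad_{\CA}$ is a two-sided ideal of $\CA$, so $\Image(\Hom_{\CA}(Z,f))\subseteq\rad_{\CA}(Z,Y)$.

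The only subtlety, and hence the step I would watch most carefully, is the step (i) $\Rightarrow$ (ii): the definition of $\rad_{\CA}(X,Y)$ stated in the article uses invertibility of $1_{X}-gf$, whereas the local-ring argument above naturally yields invertibility of $1_{Y}-fg$. I would therefore make explicit the standard equivalence ``$1_{X}-gf$ is invertible $\Leftrightarrow$ $1_{Y}-fg$ is invertible'' (whose inverses can be written down explicitly), with a reference to \cite{Kelly-radical-of-category} or \cite{Krause-KS-cats-and-projective-covers}. Everything else is a straightforward mirror of the previous proposition's proof.
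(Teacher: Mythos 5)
Your proof is correct and is exactly the argument the paper intends: the paper omits this proof as the formal dual of the preceding proposition, and your write-up is precisely that dualisation, including the one genuine subtlety (reconciling the ``left-handed'' definition of $\rad_{\CA}(X,Y)$ via $1_{X}-gf$ with the ``right-handed'' condition $1_{Y}-fg$ that the local-ring argument naturally produces), which you handle correctly via the standard equivalence.
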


Immediately from the above two results, we have
\begin{cor}
\label{cor:additive cat f X to Y EndX or End Y local and f non-section non-retraction then f radical}Let
$\CA$ be a $\ring$-category and $f\colon X\to Y$ a morphism in $\CA$,
and suppose $\End_{\CA}X$ is local or $\End_{\CA}Y$ is local. If
$f$ is neither a section nor a retraction, then $f\in\rad_{\CA}(X,Y)$.
\end{cor}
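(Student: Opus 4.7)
The plan is to deduce the corollary immediately from the two preceding propositions by a case split on which endomorphism ring is assumed to be local. Since $f$ is neither a section nor a retraction, both of the relevant hypotheses (i) in Propositions \ref{lem:AuSm80 Lem 3.8 for additive cat - EndX local, TFAE for f X to Y (i) f non-section (ii) f radical (iii) Im(Hom(f,X)) contained in radEndX=00003Drad(X,X) (iv) for all Z Im(Hom(f,Z)) contained in rad(X,Z)} and \ref{lem:AuSm80 Lem 3.8 for additive cat - EndY local, TFAE for f X to Y (i) f non-retraction (ii) f radical (iii) Im(Hom(Y,f)) contained in radEndY=00003Drad(Y,Y) (iv) for all Z Im(Hom(Z,f)) contained in rad(Z,Y)} are available to us; only one is needed to conclude, so the argument will be extremely short.

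First I would assume $\End_{\CA}X$ is local. Then, since $f$ is not a section, the implication (i) $\Rightarrow$ (ii) of Proposition \ref{lem:AuSm80 Lem 3.8 for additive cat - EndX local, TFAE for f X to Y (i) f non-section (ii) f radical (iii) Im(Hom(f,X)) contained in radEndX=00003Drad(X,X) (iv) for all Z Im(Hom(f,Z)) contained in rad(X,Z)} directly yields $f\in\rad_{\CA}(X,Y)$. Otherwise, $\End_{\CA}Y$ is local, and since $f$ is not a retraction, the implication (i) $\Rightarrow$ (ii) of Proposition \ref{lem:AuSm80 Lem 3.8 for additive cat - EndY local, TFAE for f X to Y (i) f non-retraction (ii) f radical (iii) Im(Hom(Y,f)) contained in radEndY=00003Drad(Y,Y) (iv) for all Z Im(Hom(Z,f)) contained in rad(Z,Y)} again yields $f\in\rad_{\CA}(X,Y)$.

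There is essentially no obstacle here: the real content has already been established in the two TFAE propositions, and the corollary is just the conjunction of the two (i) $\Rightarrow$ (ii) directions packaged under a weaker hypothesis. The only thing to note is that the hypothesis ``$\End_{\CA}X$ is local or $\End_{\CA}Y$ is local'' is genuinely a disjunction, so the proof must branch on the two cases, but both branches are one-line invocations. I would therefore present the proof as a single short paragraph with an ``if $\ldots$ then $\ldots$; otherwise $\ldots$'' structure, ending the case analysis without further calculation.
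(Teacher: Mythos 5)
Your argument is correct and is exactly what the paper intends: the corollary is stated as following ``immediately from the above two results,'' i.e.\ by applying the (i) $\Rightarrow$ (ii) direction of Proposition \ref{lem:AuSm80 Lem 3.8 for additive cat - EndX local, TFAE for f X to Y (i) f non-section (ii) f radical (iii) Im(Hom(f,X)) contained in radEndX=00003Drad(X,X) (iv) for all Z Im(Hom(f,Z)) contained in rad(X,Z)} or of Proposition \ref{lem:AuSm80 Lem 3.8 for additive cat - EndY local, TFAE for f X to Y (i) f non-retraction (ii) f radical (iii) Im(Hom(Y,f)) contained in radEndY=00003Drad(Y,Y) (iv) for all Z Im(Hom(Z,f)) contained in rad(Z,Y)} according to which endomorphism ring is local. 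Your case split matches the paper's (implicit) reasoning exactly.
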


So far we have only studied irreducible morphisms and, as mentioned
earlier, we will also be concerned with (minimal) left/right almost
split morphisms.
\begin{defn}
\label{def:(min) left right almost split}\cite[\S 2]{AuslanderReiten-Rep-theory-of-Artin-algebras-IV}
Let $f\colon X\to Y$ be a morphism in an arbitrary category. We call
$f$ \emph{right almost split} if \begin{enumerate}[(i)]
\item $f$ is not a retraction; and
\item for any non-retraction $u\colon U\to Y$ there exists $\hat{u}\colon U\to X$ such that $f\hat{u}=u$.
\end{enumerate}

If $fg=f$ implies $g$ is an automorphism for any $g\colon X\to X$,
then $f$ is said to be \emph{right minimal}. If $f$ is both right
minimal and right almost split, then $f$ is called \emph{minimal
right almost split}.

Dually, one can define the notions of \emph{left almost split}, \emph{left
minimal} and \emph{minimal left almost split}.
\end{defn}

We recall that in an additive category if $f\colon X\to Y$ is right
almost split (respectively, left almost split), then $Y$ (respectively,
$X$) has local endomorphism ring; see \cite[Lem. 2.3]{AuslanderReiten-Rep-theory-of-Artin-algebras-IV}.

\begin{prop}
\label{prop:left (right) minimal almost split with nonzero target (domain) is irreducible}Let
$\CA$ be an additive category with split idempotents. \emph{\begin{enumerate}[(i)]
\item \emph{If $f\colon X \to Y$ is minimal left almost split and $Y\neq0$, then $f$ is irreducible.}
\item \emph{If $f\colon X \to Y$ is minimal right almost split and $X\neq 0$, then $f$ is irreducible.}
\end{enumerate}}
\end{prop}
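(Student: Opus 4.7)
The plan is to verify, for part (i), each of the three defining conditions of an irreducible morphism from Definition \ref{def:irreducible morphism}; part (ii) will then follow by a formally dual argument.

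So suppose $f\colon X\to Y$ is minimal left almost split with $Y\neq 0$. Condition (i) of Definition \ref{def:irreducible morphism} is immediate from the definition of left almost split (the dual of Definition \ref{def:(min) left right almost split}): $f$ is not a section. For condition (ii), I would use that, as recalled just before the proposition, $f$ being left almost split forces $\End_{\CA}X$ to be local, whence $X$ is indecomposable. Then Lemma \ref{lem:add cat with split idems, X indecomp and Y neq 0, then f X to Y retraction implies f isomorphism - and dual statement}(i), applied with $X$ indecomposable and $Y\neq 0$, says that if $f$ were a retraction then $f$ would be an isomorphism, hence a section, contradicting condition (i). Therefore $f$ is not a retraction.

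For condition (iii), suppose $f=hg$ for some $g\colon X\to Z$ and $h\colon Z\to Y$. If $g$ is a section, we are done, so assume it is not. Applying the lifting property of the left almost split morphism $f$ to the non-section $g\colon X\to Z$ yields $\hat{g}\colon Y\to Z$ such that $\hat{g}f=g$. Substituting back, $f=hg=(h\hat{g})f$. Left minimality of $f$ then forces $h\hat{g}$ to be an automorphism of $Y$; writing $\alpha$ for its inverse, the identity $h(\hat{g}\alpha)=1_{Y}$ exhibits $h$ as a retraction, as required.

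The main obstacle is essentially expository: the only non-formal step is checking that $f$ is not a retraction, where one must combine the locality of $\End_{\CA}X$ with Lemma \ref{lem:add cat with split idems, X indecomp and Y neq 0, then f X to Y retraction implies f isomorphism - and dual statement}; the rest of the argument is a clean interplay between the lifting property of left almost split morphisms and left minimality. Part (ii) follows by dualising each step: for $f$ minimal right almost split with $X\neq 0$, Lemma \ref{lem:add cat with split idems, X indecomp and Y neq 0, then f X to Y retraction implies f isomorphism - and dual statement}(ii) shows $f$ is not a section, and in any factorisation $f=hg$ with $h$ a non-retraction one lifts $h$ via right almost split and invokes right minimality to conclude $g$ is a section.
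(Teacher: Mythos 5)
Your proposal is correct: each of the three conditions in Definition \ref{def:irreducible morphism} is verified exactly as you describe, with the hypothesis $Y\neq 0$ entering only in the non-retraction step via locality of $\End_{\CA}X$ and Lemma \ref{lem:add cat with split idems, X indecomp and Y neq 0, then f X to Y retraction implies f isomorphism - and dual statement}, and the lifting-plus-left-minimality argument for the factorisation condition is sound. The paper itself proves this by a one-line appeal to the criterion of Auslander--Reiten \cite[Thm. 2.4 (b)]{AuslanderReiten-Rep-theory-of-Artin-algebras-IV}; your argument is precisely the standard direct verification underlying that citation, so it is essentially the same approach, merely written out in full.
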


\begin{proof}
For (i), notice that $f$ satisfies the criterion in \cite[Thm. 2.4 (b)]{AuslanderReiten-Rep-theory-of-Artin-algebras-IV}.
Statement (ii) is dual. $\phantom{blah}$
\end{proof}
The next proposition is an observation that we may generalise \cite[Prop. 2.18]{BautistaSalorio-irred-morphs-in-bdd-derived-cat}
to a category with split idempotents that is not necessarily Krull-Schmidt,
e.g. the category of all left $R$-modules for a ring $R$, or the
category of all Banach spaces (over $\mathbb{R},$ for example). This
result generalises \cite[Cor. 2.5]{AuslanderReiten-Rep-theory-of-Artin-algebras-IV}
since an irreducible morphism with a domain or codomain that has local
endomorphism ring is radical by Corollary \ref{cor:additive cat f X to Y EndX or End Y local and f non-section non-retraction then f radical}.
We omit the proof as the one given in \cite{BautistaSalorio-irred-morphs-in-bdd-derived-cat}
holds in our generality using \cite[Rem. 7.4]{Buhler-exact-cats}.
See \cite[Prop. 3.2]{Bautista-irred-morphs-and-radical-of-a-cat} also.
\begin{prop}
\label{prop:BaSo11 Prop. 2.18 for add cat with split idems}Let $\CA$
be an additive category with split idempotents. Suppose $f\colon X\to Y$
is a radical irreducible morphism in $\CA$.\emph{\begin{enumerate}[(i)]
\item \emph{If $0\neq g\colon W\to X$ is a section, then $fg\colon W\to Y$ is irreducible.}
\item \emph{If $0\neq h\colon Y\to Z$ is a retraction, then $hf\colon X\to Z$ is irreducible.}
\end{enumerate}}
\end{prop}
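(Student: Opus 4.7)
The plan is to verify the three defining properties of an irreducible morphism (Definition~\ref{def:irreducible morphism}) for $fg$, which establishes part (i); part (ii) follows by a dual argument. Since $\CA$ has split idempotents and $g\colon W\to X$ is a nonzero section with some retraction $r\colon X\to W$, \cite[Rem. 7.4]{Buhler-exact-cats} yields a decomposition $X\iso W\oplus W'$ together with an inclusion $\iota\colon W'\to X$ and projection $\pi\colon X\to W'$ satisfying $gr+\iota\pi=1_X$. This direct sum description is the main technical device.

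First I would show $fg$ is neither a retraction nor a section. If $fg$ admitted a right inverse $s\colon Y\to W$, then $f\circ(gs)=1_Y$ would make $f$ itself a retraction, contradicting its irreducibility. If $fg$ admitted a left inverse $s\colon Y\to W$, then $(1_X-gsf)g=g-g(sfg)=g-g=0$; because $f\in\rad_{\CA}(X,Y)$ and $\rad_{\CA}$ is an ideal, $gsf\in\rad_{\CA}(X,X)=J(\End_{\CA} X)$, so $1_X-gsf$ is invertible. This forces $g=0$, contradicting $g\neq 0$.

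For the factorisation axiom, suppose $fg=\beta\alpha$ with $\alpha\colon W\to V$ and $\beta\colon V\to Y$. I would lift this to a factorisation of $f$ through $V\oplus W'$ by setting $\widetilde{\alpha}\colon X\to V\oplus W'$ equal to $\alpha\oplus 1_{W'}$ (after identifying $X$ with $W\oplus W'$) and $\widetilde{\beta}\colon V\oplus W'\to Y$ equal to the morphism with components $\beta$ and $f\iota$. A short block-matrix computation using $\beta\alpha=fg$ and $gr+\iota\pi=1_X$ verifies $\widetilde{\beta}\widetilde{\alpha}=f$, so the irreducibility of $f$ forces $\widetilde{\alpha}$ to be a section or $\widetilde{\beta}$ a retraction. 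In the former case, the $V\to W$ block of any retraction of $\widetilde{\alpha}$ is seen to retract $\alpha$. In the latter, a section of $\widetilde{\beta}$ has components $\tau_1\colon Y\to V$ and $\tau_2\colon Y\to W'$ satisfying $\beta\tau_1+f\iota\tau_2=1_Y$; since $f\iota\tau_2\in\rad_{\CA}(Y,Y)=J(\End_{\CA} Y)$, the element $\beta\tau_1=1_Y-f\iota\tau_2$ is invertible, so $\beta$ is a retraction.

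The delicate point is the essential use of the radical hypothesis in both the ``section'' step and the $\widetilde{\beta}$-retraction case: without $f$ being radical, the correction terms $gsf$ and $f\iota\tau_2$ need not lie in the respective Jacobson radicals, and the key invertibilities break down. In essence, the main content of the argument is tracking how the ideal property of $\rad_{\CA}(-,-)$ converts the factorisations of $f$ inherited from irreducibility into the desired factorisations of $fg$, up to correction by units in the relevant endomorphism rings.
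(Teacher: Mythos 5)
Your proof is correct, and it follows exactly the route the paper indicates: the paper omits the argument, citing that the proof of \cite[Prop. 2.18]{BautistaSalorio-irred-morphs-in-bdd-derived-cat} goes through using the direct sum decomposition $X\iso W\oplus W'$ supplied by \cite[Rem. 7.4]{Buhler-exact-cats}, which is precisely the device you use, together with the standard radical/Jacobson-radical invertibility tricks. In effect you have written out in full the proof the paper chose to leave to the reader.
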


The following is a version of \cite[Prop. 2.10]{AuslanderReiten-Rep-theory-of-Artin-algebras-IV}
for the quasi-abelian setting. We will see that the idea behind the
proof is the same, but we have to negotiate around the fact that the
class of kernels (respectively, cokernels) does not necessarily coincide
with the class of monomorphisms (respectively, epimorphisms) in the
category.
\begin{prop}
\label{prop:A quasi-abelian f irred mono indecomp target then any irred map to coker of f is epic - and dual}Suppose
$\CA$ is a quasi-abelian category.\emph{\begin{enumerate}[(i)]
\item \emph{If $f\colon X\to Y$ is an irreducible monomorphism, $Y$ is indecomposable and $v\colon V\to\Cok f$ is any irreducible morphism, then $v$ is epic.}
\item \emph{If $f\colon X\to Y$ is an irreducible epimorphism, $X$ is indecomposable and $u\colon\Ker f\to X$ is any irreducible morphism, then $u$ is monic.}
\end{enumerate}}
\end{prop}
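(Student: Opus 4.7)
The plan is to apply Proposition \ref{prop:ASS Lem IV.1.7(a) or AR IV Prop 2.7 (a) for quasi-abelian category - criterion for irreducible if f is a kernel} to the irreducible morphism $f$ with its cokernel $c\deff\cok f\colon Y\to C$, where $C\deff\Cok f$. For the given $v\colon V\to C$, this criterion yields one of two alternatives: either (A) there exists $v_1\colon V\to Y$ with $cv_1=v$, or (B) there exists $v_2\colon Y\to V$ with $c=vv_2$. Alternative (B) finishes the argument immediately, since $c$ is epic (being a cokernel) and so its left factor $v$ must be epic.

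The substantive work lies in alternative (A). Applying the irreducibility of $v$ to the factorisation $v=cv_1$, either $c$ is a retraction or $v_1$ is a section. I would rule out the first possibility as follows: by Lemma \ref{lem:ker is ker of its cok - cok is cok of its ker} the sequence $\Ker c\to Y\overset{c}{\to}C$ is short exact, so if $c$ splits then the Splitting Lemma (Proposition \ref{prop:Splitting-Lemma in additive category}) gives $Y\iso\Ker c\oplus C$. Indecomposability of $Y$ then forces either $C=0$, in which case $v\colon V\to 0$ is automatically a retraction, contradicting the irreducibility of $v$; or $c$ to be an isomorphism, in which case $cf=0$ forces $f=0$, whence $X=0$ (as $f$ is monic), making $f\colon 0\to Y$ a section, contradicting the irreducibility of $f$. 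Hence $v_1$ must be a section, and irreducibility of $v$ also forces $V\neq 0$ (otherwise $v\colon 0\to C$ is itself a section). Applying Lemma \ref{lem:add cat with split idems, X indecomp and Y neq 0, then f X to Y retraction implies f isomorphism - and dual statement}(ii) to $v_1\colon V\to Y$ with $Y$ indecomposable and $V\neq 0$ then forces $v_1$ to be an isomorphism, so that $v=cv_1$ is the composite of an isomorphism with an epimorphism, hence epic, as required. Statement (ii) is dual and follows by the mirror argument using Proposition \ref{prop:ASS Lem IV.1.7(b) or AR IV Prop 2.7 (b) for quasi-abelian category - criterion for irreducible if f is a cokernel} and Lemma \ref{lem:add cat with split idems, X indecomp and Y neq 0, then f X to Y retraction implies f isomorphism - and dual statement}(i).

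The main obstacle, and the point at which the argument departs from its abelian analogue \cite[Prop.~2.10]{AuslanderReiten-Rep-theory-of-Artin-algebras-IV}, is that in a quasi-abelian category a monomorphism need not be a kernel. One therefore cannot simply treat $X\overset{f}{\to}Y\overset{c}{\to}C$ as short exact and read off splitting information about $f$ from that of $c$. Instead, the automatically short exact sequence $\Ker c\to Y\overset{c}{\to}C$ combined with the indecomposability of $Y$ is what resolves the awkward subcase where $c$ is a retraction; the indecomposability of $Y$ also does the key work in the subcase where $v_1$ is a section, by promoting it to an isomorphism.
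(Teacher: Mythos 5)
Your proof is correct and follows essentially the same route as the paper's: apply Proposition \ref{prop:ASS Lem IV.1.7(a) or AR IV Prop 2.7 (a) for quasi-abelian category - criterion for irreducible if f is a kernel} to obtain the two alternatives, dispose of the $c=vv_2$ case immediately, and in the other case use the irreducibility of $v$ together with the indecomposability of $Y$ to rule out $c$ being a retraction (reaching the same contradiction $X=0$, $f$ a section) and to promote the section $v_1$ to an isomorphism. The only cosmetic difference is that you exclude the retraction subcase via the Splitting Lemma applied to $\Ker c\to Y\to C$, whereas the paper disposes of $C=0$ up front and then invokes Lemma \ref{lem:add cat with split idems, X indecomp and Y neq 0, then f X to Y retraction implies f isomorphism - and dual statement} directly.
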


\begin{proof}
We only prove (i) as (ii) is dual. Suppose $f\colon X\to Y$ is an
irreducible monomorphism, and that $Y$ is an indecomposable object.
First, if $C\deff\Cok f=0$ then any morphism $v\colon V\to C$ is
trivially epic as $\CA$ is additive, so we may suppose $C\neq0$.

Consider the (not necessarily short exact) sequence $\begin{tikzcd}[column sep=1.5cm]X \arrow{r}{f}& Y\arrow[two heads]{r}{c\deff\cok f} & C.\end{tikzcd}$
Let $v\colon V\to C$ be an irreducible morphism in $\CA$. By Proposition
\ref{prop:ASS Lem IV.1.7(a) or AR IV Prop 2.7 (a) for quasi-abelian category - criterion for irreducible if f is a kernel}
either there exists $v_{1}\colon V\to Y$ such that $cv_{1}=v$ or
there exists $v_{2}\colon Y\to V$ such that $c=vv_{2}$. In the latter
case, as $c$ is epic, $v$ would also be epic and we would be done.
Thus, suppose no such $v_{2}$ exists. Then there exists $v_{1}\colon V\to Y$
with $cv_{1}=v$. But now $v$ is irreducible, and so either $c$
is a retraction or $v_{1}$ is a section. If $c\colon Y\to C$ is
a retraction, then by Lemma \ref{lem:add cat with split idems, X indecomp and Y neq 0, then f X to Y retraction implies f isomorphism - and dual statement}
we have that $c$ is an isomorphism since $Y$ is indecomposable and
$C\neq0$. However, this implies $f=c^{-1}cf=0$ and in turn yields
$1_{X}=0$, since $f\circ1_{X}=f=0$ and $f$ is monic. Thus, we would
have $X=0$ and $f$ is in fact a section, which contradicts that
$f$ is irreducible. Hence, $c$ cannot be a retraction and so $v_{1}\colon V\to Y$
must be a section.

If $V=0$ then $v\colon0=V\to C$ is a section, which is impossible
as $v$ is assumed to be irreducible. Therefore, $V\neq0$ and hence,
by Lemma \ref{lem:add cat with split idems, X indecomp and Y neq 0, then f X to Y retraction implies f isomorphism - and dual statement}
again, $v_{1}\colon V\to Y$ must be an isomorphism and, in particular,
an epimorphism. Finally, we observe that $v=cv_{1}$ is the composition
of two epimorphisms and hence an epimorphism itself.
\end{proof}
\begin{defn}
\label{def:semi-stable co-kernel}\cite[p. 522]{RichmanWalker-ext-in-preabelian-cats}
Let $\CA$ be an additive category. A kernel (respectively, cokernel)
is called \emph{semi-stable} if every pushout (respectively, pullback)
of it is again a kernel (respectively, a cokernel).
\end{defn}

\begin{example}
Consider a stable exact sequence $\xi\colon X\overset{f}{\to}Y\overset{g}{\to}Z$.
Let $a\colon X\to X'$ be a morphism, and form the sequence $a\xi$
as in Definition \ref{def:cocomplex a xi and xi c for a cocomplex xi}.
Since $\xi$ is stable, the sequence $a\xi$ is short exact and hence
the pushout of $f$ along $a$ is again a kernel. Thus, $f$ is a
semi-stable kernel. Dually, $g$ is a semi-stable cokernel.
\end{example}

\begin{rem}
\label{rem:kernels-semi-stable in right q-ab cat and dual - all seqs stable in q-abelian cat}All
kernels are semi-stable in a right quasi-abelian category and all
cokernels are semi-stable in a left quasi-abelian category. In particular,
all short exact sequences are stable in a quasi-abelian category.
\end{rem}

\begin{lem}
\label{lem:A preabelian morphism 1 v 1 xi to xi is an isomorphism if xi has semi-stable kernel or cokernel}Let
$\CA$ be a preabelian category. Suppose $\xi\colon X\overset{f}{\to}Y\overset{g}{\to}Z$
is short exact. If $f$ or $g$ is semi-stable, then any morphism
$(1_{X},v,1_{Z})\colon\xi\to\xi$ is an isomorphism.
\end{lem}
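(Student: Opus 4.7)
The plan is to realise $v$ as a unipotent endomorphism of $Y$, for which an explicit inverse can be written down directly.

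First I would extract from the commutative diagram the two identities $vf = f$ and $gv = g$, whence $(1_Y - v)f = 0$ and $g(1_Y - v) = 0$. Using that $g = \cok f$, the first identity yields a unique morphism $h \colon Z \to Y$ with $hg = 1_Y - v$. Substituting back into the second identity gives $(gh)g = 0$, and since $g$ is epic (cokernels being epimorphisms in any additive category with cokernels), we get $gh = 0$. Using $f = \ker g$, there is then a unique $k \colon Z \to X$ with $fk = h$, so that $v = 1_Y - fkg$. The final computation is to verify that $fkg$ is square-zero: $(fkg)^2 = fk(gf)kg = 0$, using $gf = 0$. Hence $v$ is unipotent with inverse $1_Y + fkg$, as the telescoping calculation $(1_Y - fkg)(1_Y + fkg) = 1_Y - (fkg)^2 = 1_Y$ (and its mirror) confirms.

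No step here is a serious obstacle; the whole argument rests only on the universal properties of kernel and cokernel together with $gf = 0$. One curious point to flag is that the semi-stability of $f$ or $g$ plays no role in this proof, so the conclusion in fact holds for every short exact sequence in any preabelian category---the hypothesis in the lemma is stronger than what this approach requires. The author may have a more structural proof in mind, going through pushouts and pullbacks of $\xi$ and invoking the stability theorems of the previous section, in which case the semi-stability would genuinely be used; but the direct nilpotent-element approach above bypasses that machinery entirely.
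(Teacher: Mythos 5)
Your proof is correct, and it takes a genuinely different route from the paper's. The paper disposes of this lemma by citing \cite[Thm. 6]{RichmanWalker-ext-in-preabelian-cats} (noting that the dual holds for endomorphisms), so the semi-stability hypothesis enters only through that citation. You instead give a direct, self-contained argument: from $vf=f$ and $gv=g$ the universal properties of $g=\cok f$ and $f=\ker g$ produce a factorisation $1_{Y}-v=fkg$, and then $(fkg)^{2}=fk(gf)kg=0$ exhibits $v=1_{Y}-fkg$ as unipotent with explicit inverse $1_{Y}+fkg$. Every step is valid in any additive category in which $f=\ker g$ and $g=\cok f$, so you are right that semi-stability is superfluous for the statement as literally written; your argument is both more elementary and more general. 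The one caveat worth recording is that the lemma is later invoked (in Proposition \ref{prop:ASS Lem IV.1.12 in Hom-finite cat endo u v w of non-split short exact seq X f Y g Z with EndX (EndZ) local and w (u) iso implies u (w) iso and if f or g semi-stable then v iso too}) for a morphism $(1_{X},v,1_{Z})$ from the short exact sequence $X\overset{fu^{-1}}{\to}Y\overset{wg}{\to}Z$ to $\xi$, i.e.\ between two \emph{different} short exact sequences sharing end-terms. There your computation only gives $g(1_{Y}-v)=g-wg$, which need not vanish, so the unipotent trick does not apply and the Richman--Walker comparison theorem (where semi-stability does real work) is still needed. Your proof therefore establishes the stated lemma cleanly, but it is not a drop-in replacement for the cited result in all of its downstream uses.
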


\begin{proof}
This follows from \cite[Thm. 6]{RichmanWalker-ext-in-preabelian-cats},
noting that the dual of this result by Richman and Walker holds when
the morphism $(1_{X},v,1_{Z})$ of short exact sequences is an endomorphism. \phantom{spacefiller}
\end{proof}
Our main theorem in \S\ref{sec:Auslander-Reiten-Theory-in-Krull-Schmidt-Categories}
generalises \cite[Thm. IV.1.13]{AssemSimsonSkowronski-Vol1}, and part
of the proof uses tools to detect when an endomorphism $(u,v,w)$ of
a short exact sequence is in fact an \emph{isomorphism}, i.e. $u,v,w$
are all isomorphisms. We present generalisations of these tools now,
and we will see the work of \S\ref{subsec:Ext-in-preabelian-cat}
used below. We will assume for simplicity that a preabelian
category $\CA$ is skeletally small whenever our proofs require the
use of an extension group. However, we only really need that the first
extension group is a set in the relevant arguments (see Remark \ref{rem:need Ext(Z X) to be a set to be a group}). 
\begin{defn}
\label{def:Hom-finite}\cite[p. 547]{Krause-KS-cats-and-projective-covers}
An additive category $\CA$ is called \emph{$\Hom$-finite} if $\CA$
is a $\ring$-category, for some commutative ring $\ring$, and $\Hom_{\CA}(X,Y)$
is a finite length $\ring$-module for any $X,Y\in\CA$.
\end{defn}

\begin{prop}
\label{prop:ASS Lem IV.1.12 in Hom-finite cat endo u v w of non-split short exact seq X f Y g Z with EndX (EndZ) local and w (u) iso implies u (w) iso and if f or g semi-stable then v iso too}Let
$\CA$ be a $\Hom$-finite category. Suppose we have a commutative
diagram$$\begin{tikzcd}X\arrow{r}{f}\arrow{d}{u}&Y\arrow{r}{g}\arrow{d}{v}&Z\arrow{d}{w} \\ X\arrow{r}[swap]{f}&Y\arrow{r}[swap]{g}&Z\end{tikzcd}$$in
$\CA$ with non-split short exact rows. If $\End_{\CA}X$ (respectively,
$\End_{\CA}Z$) is local and $w$ (respectively, $u$) is an automorphism,
then $u$ (respectively, $w$) is an automorphism.

Further, if $\CA$ is also preabelian and if $f$ or $g$ is semi-stable,
then $v$ is also an automorphism in this case.
\end{prop}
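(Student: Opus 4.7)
The plan is to handle the two assertions in sequence, leveraging the non-splitness of $\xi$ via the Splitting Lemma (Proposition \ref{prop:Splitting-Lemma in additive category}) in each one. For the first assertion only $\Hom$-finiteness is required: a local endomorphism ring in a $\Hom$-finite category has finite $\ring$-length, hence is Artinian, so its Jacobson radical is nilpotent and each element is either a unit or nilpotent. This Fitting-type dichotomy is the central tool.

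I will argue the case in which $\End_\CA Z$ is local and $u$ is an automorphism, the dual case being analogous. Suppose for contradiction that $w$ is not an automorphism; by the dichotomy above, $w^n = 0$ for some $n \geq 1$. Composing the given endomorphism with itself $n$ times produces $(u^n, v^n, w^n) \colon \xi \to \xi$, and commutativity of the right square gives $g v^n = w^n g = 0$. Since $f = \ker g$, there is a unique $\alpha \colon Y \to X$ with $v^n = f\alpha$; substituting into the left square and using that $f$ is monic forces $\alpha f = u^n$. As $u^n$ is an automorphism, $f$ becomes a section, contradicting the non-splitness of $\xi$. The dual case (with $\End_\CA X$ local and $u^n = 0$) proceeds symmetrically by factoring $v^n$ through $g = \cok f$ and exhibiting $g$ as a retraction.

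For the second assertion, the first gives both $u$ and $w$ as automorphisms. The strategy is to construct an accompanying morphism $(u^{-1}, v', w^{-1}) \colon \xi \to \xi$ and then apply Lemma \ref{lem:A preabelian morphism 1 v 1 xi to xi is an isomorphism if xi has semi-stable kernel or cokernel} to the composites. To produce $v'$ I pass to the extension group: following the paper's convention I may assume $\CA$ is skeletally small so that $\Ext_\CA^1(Z, X)$ is a set. The existence of $(u, v, w) \colon \xi \to \xi$ then translates to the equality $u \xi = \xi w$ in $\Ext_\CA^1(Z, X)$, and by the bimodule structure of Theorem \ref{thm:Ext^1 is a left-right EndX-EndZ bimodule in preabelian category}---in particular the associativity of Theorem \ref{thm:C80 Thm 2 stable exact implies associativity ie bimodule structure for Ext} together with the identities of Lemma \ref{lem:Mac p71 0xi iso split and 1xi is xi and dual statements on other side}---acting by $u^{-1}$ on the left and $w^{-1}$ on the right yields $u^{-1}\xi = \xi w^{-1}$. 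Composing the canonical pushout morphism $\xi \to u^{-1}\xi$, the isomorphism $u^{-1}\xi \tensor[_X]{\iso}{_Z} \xi w^{-1}$ representing this equality, and the canonical pullback morphism $\xi w^{-1} \to \xi$ then delivers the desired morphism $(u^{-1}, v', w^{-1}) \colon \xi \to \xi$.

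With $v'$ in hand, the composites $(u^{-1}, v', w^{-1}) \circ (u, v, w) = (1_X, v'v, 1_Z)$ and $(u, v, w) \circ (u^{-1}, v', w^{-1}) = (1_X, vv', 1_Z)$ are endomorphisms of $\xi$ whose outer entries are identities. Since $f$ or $g$ is semi-stable, Lemma \ref{lem:A preabelian morphism 1 v 1 xi to xi is an isomorphism if xi has semi-stable kernel or cokernel} forces both $v'v$ and $vv'$ to be isomorphisms, so $v$ admits a left inverse and a right inverse---which must coincide---and is therefore an automorphism. The principal obstacle is the Ext-theoretic construction of $v'$, which rests on the full (corrected) bimodule machinery developed in \S\ref{subsec:Ext-in-preabelian-cat}; a direct pushout/pullback manipulation is possible in principle but would essentially reproduce that machinery by hand.
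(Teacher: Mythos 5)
Your first assertion is handled correctly and by essentially the argument the paper outsources to \cite[Lem.\ IV.1.12]{AssemSimsonSkowronski-Vol1}: $\Hom$-finiteness makes the local ring $\End_{\CA}Z$ Artinian, so a non-unit $w$ satisfies $w^{n}=0$; then $gv^{n}=w^{n}g=0$ forces $v^{n}$ to factor through $f=\ker g$, and monicity of $f$ turns $f$ into a section, contradicting non-splitness via the Splitting Lemma. That part is fine and matches the paper's intended proof.

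The second assertion is where there is a genuine gap. Your construction of $(u^{-1},v',w^{-1})$ passes through $\Ext^{1}_{\CA}(Z,X)$, Theorem \ref{thm:C80 Thm 2 stable exact implies associativity ie bimodule structure for Ext} and Lemma \ref{lem:Mac p71 0xi iso split and 1xi is xi and dual statements on other side}, all of which are stated for \emph{stable} exact sequences: by Definition \ref{def:Ext^1 in preabelian category} the elements of $\Ext^{1}_{\CA}(Z,X)$ are the stable sequences, and stability forces \emph{both} $f$ to be a semi-stable kernel \emph{and} $g$ to be a semi-stable cokernel. The proposition assumes only that \emph{one} of $f$, $g$ is semi-stable, and says nothing about skeletal smallness, so $\xi$ need not lie in $\Ext^{1}_{\CA}(Z,X)$ and the manipulation $u^{-1}\xi=\xi w^{-1}$ is not licensed under the stated hypotheses. (This is precisely the distinction the paper draws between this proposition and Proposition \ref{prop:in preabelian cat endo u v w of non-split stable short exact seq X to Y to Z with EndX (EndZ) local and w (u) iso implies u (w) iso and v iso too}, where stability and skeletal smallness are assumed and the Ext argument is legitimate.) The detour is also unnecessary: once $u$ and $w$ are known to be automorphisms, $X\overset{fu^{-1}}{\longrightarrow}Y\overset{wg}{\longrightarrow}Z$ is again short exact, its kernel (respectively cokernel) map is semi-stable whenever $f$ (respectively $g$) is, and $(1_{X},v,1_{Z})$ is a morphism from this sequence to $\xi$; a single application of Lemma \ref{lem:A preabelian morphism 1 v 1 xi to xi is an isomorphism if xi has semi-stable kernel or cokernel} (really \cite[Thm.\ 6]{RichmanWalker-ext-in-preabelian-cats}, which applies to morphisms between two sequences with identical end-terms, not only to endomorphisms) gives that $v$ is an automorphism. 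Your argument could be repaired by noting that every pushout and pullback you need is taken along an isomorphism, so the required identities can be verified directly without stability---but as written the key step is not justified.
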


\begin{proof}
Suppose that $\End_{\CA}X$ is local and $w$ is an automorphism of
$Z$. Showing that $u$ is an automorphism in this case is the same
as in \cite[Lem. IV.1.12]{AssemSimsonSkowronski-Vol1}.

Now assume $\CA$ is also preabelian, and that $f$ is a semi-stable
kernel or $g$ is a semi-stable cokernel. Consider the commutative
diagram $$\begin{tikzcd}X\arrow{r}{fu^{-1}}\arrow[equal]{d}&Y\arrow{r}{wg}\arrow{d}{v}&Z\arrow[equal]{d} \\ X\arrow{r}[swap]{f}&Y\arrow{r}[swap]{g}&Z\end{tikzcd}$$that
has short exact rows. Then $v$ is an automorphism by Lemma \ref{lem:A preabelian morphism 1 v 1 xi to xi is an isomorphism if xi has semi-stable kernel or cokernel}.
\end{proof}
The following corollary is a generalisation of \cite[Lem. IV.1.12]{AssemSimsonSkowronski-Vol1}
to a quasi-abelian $\Hom$-finite setting, and it follows quickly
from Proposition \ref{prop:ASS Lem IV.1.12 in Hom-finite cat endo u v w of non-split short exact seq X f Y g Z with EndX (EndZ) local and w (u) iso implies u (w) iso and if f or g semi-stable then v iso too}
in light of Remark \ref{rem:kernels-semi-stable in right q-ab cat and dual - all seqs stable in q-abelian cat}.
\begin{cor}
\label{cor:ASS Lem IV.1.12 in Hom-finite cat endo u v w of non-split short exact seq X f Y g Z with EndX (EndZ) local and w (u) iso implies u (w) iso and if cat left or right quasi-abelian then v iso too}Let
$\CA$ be a $\Hom$-finite category, which is left or right quasi-abelian.
Suppose we have a commutative diagram$$\begin{tikzcd}X\arrow{r}{f}\arrow{d}{u}&Y\arrow{r}{g}\arrow{d}{v}&Z\arrow{d}{w} \\ X\arrow{r}[swap]{f}&Y\arrow{r}[swap]{g}&Z\end{tikzcd}$$in
$\CA$ with non-split short exact rows. If $\End_{\CA}X$ (respectively,
$\End_{\CA}Z$) is local and $w$ (respectively, $u$) is an automorphism,
then $u$ (respectively, $w$) is an automorphism. Furthermore, $v$
is also an automorphism.
\end{cor}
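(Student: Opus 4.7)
The plan is to derive this corollary as an almost immediate consequence of Proposition \ref{prop:ASS Lem IV.1.12 in Hom-finite cat endo u v w of non-split short exact seq X f Y g Z with EndX (EndZ) local and w (u) iso implies u (w) iso and if f or g semi-stable then v iso too}, with the quasi-abelian hypothesis serving only to upgrade ``semi-stability of $f$ or $g$'' from an assumption to a free consequence of the setting.

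First I would observe that any left or right quasi-abelian category is, by definition, preabelian, so all hypotheses of Proposition \ref{prop:ASS Lem IV.1.12 in Hom-finite cat endo u v w of non-split short exact seq X f Y g Z with EndX (EndZ) local and w (u) iso implies u (w) iso and if f or g semi-stable then v iso too} are in place as soon as I can supply semi-stability of one of the outer maps of the given non-split short exact sequence. Applying the first half of the Proposition (which does not require semi-stability) directly gives the first assertion: locality of $\End_\CA X$ together with $w$ being an automorphism forces $u$ to be an automorphism, and the respectively-statement is symmetric.

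For the ``furthermore'' clause, I would split into two cases according to whether $\CA$ is left or right quasi-abelian, and invoke Remark \ref{rem:kernels-semi-stable in right q-ab cat and dual - all seqs stable in q-abelian cat}. If $\CA$ is right quasi-abelian, then every kernel is semi-stable; since $f = \ker g$ by short exactness of the row, the map $f$ is a semi-stable kernel. If instead $\CA$ is left quasi-abelian, then every cokernel is semi-stable, and since $g = \cok f$, the map $g$ is a semi-stable cokernel. In either case the hypothesis of the second half of Proposition \ref{prop:ASS Lem IV.1.12 in Hom-finite cat endo u v w of non-split short exact seq X f Y g Z with EndX (EndZ) local and w (u) iso implies u (w) iso and if f or g semi-stable then v iso too} is met, and applying it yields that $v$ is also an automorphism.

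There is really no obstacle to overcome: the entire content of the corollary has been packaged into the preceding Proposition and Remark, and the proof amounts to recording the two implications ``right quasi-abelian $\Rightarrow$ $f$ semi-stable'' and ``left quasi-abelian $\Rightarrow$ $g$ semi-stable''. The only minor point worth verifying is that the respectively-clauses line up correctly on both sides of the invocation, but this is automatic from the symmetric formulation of the Proposition.
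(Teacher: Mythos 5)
Your proposal is correct and matches the paper's intended argument exactly: the paper derives the corollary from Proposition \ref{prop:ASS Lem IV.1.12 in Hom-finite cat endo u v w of non-split short exact seq X f Y g Z with EndX (EndZ) local and w (u) iso implies u (w) iso and if f or g semi-stable then v iso too} together with Remark \ref{rem:kernels-semi-stable in right q-ab cat and dual - all seqs stable in q-abelian cat}, which is precisely the case split you record (right quasi-abelian gives $f=\ker g$ semi-stable, left quasi-abelian gives $g=\cok f$ semi-stable). No gaps.
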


The next proposition is a generalisation of \cite[Lem. 2.13]{AuslanderReiten-Rep-theory-of-Artin-algebras-IV}
for preabelian categories. However, note that we need to assume the
short exact sequence in question is stable.
\begin{prop}
\label{prop:in preabelian cat endo u v w of non-split stable short exact seq X to Y to Z with EndX (EndZ) local and w (u) iso implies u (w) iso and v iso too}Let
$\CA$ be a skeletally small, preabelian category. Suppose we have
a commutative diagram$$\begin{tikzcd}X\arrow{r}{f}\arrow{d}{u}&Y\arrow{r}{g}\arrow{d}{v}&Z\arrow{d}{w} \\ X\arrow{r}[swap]{f}&Y\arrow{r}[swap]{g}&Z\end{tikzcd}$$in
$\CA$, where $\xi\colon X\overset{f}{\to}Y\overset{g}{\to}Z$ is
a non-split stable exact sequence. If $\End_{\CA}X$ (respectively,
$\End_{\CA}Z$) is local and $w$ (respectively, $u$) is an automorphism,
then $u$ (respectively, $w$) and hence $v$ are automorphisms.
\end{prop}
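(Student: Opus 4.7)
The plan is to lift the problem to the bimodule $\Ext_{\CA}^{1}(Z,X)$. Since $\CA$ is skeletally small and preabelian, Remark~\ref{rem:need Ext(Z X) to be a set to be a group} guarantees that $\Ext_{\CA}^{1}(Z,X)$ is a set, and since $\xi$ is stable, Theorems \ref{thm:Ext^1 in preabelian cat is an abelian group} and \ref{thm:Ext^1 is a left-right EndX-EndZ bimodule in preabelian category} give it the structure of an $(\End_{\CA}X,\End_{\CA}Z)$-bimodule. The commuting squares $fu=vf$ and $gv=wg$ translate, via the universal properties used to define $u\xi$ and $\xi w$ in Definition \ref{def:cocomplex a xi and xi c for a cocomplex xi}, into the bimodule identity $u\xi=\xi w$. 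By symmetry it suffices to treat the case $\End_{\CA}X$ local and $w\in\Aut_{\CA}Z$.

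Suppose for contradiction that $u\notin\Aut_{\CA}X$. Since $\End_{\CA}X$ is local, $u\in J(\End_{\CA}X)=\rad_{\CA}(X,X)$, so $1_{X}-u$ is a unit in $\End_{\CA}X$. Subtracting the identity endomorphism $(1_{X},1_{Y},1_{Z})$ of $\xi$ from $(u,v,w)$ yields another morphism $(1_{X}-u,1_{Y}-v,1_{Z}-w)\colon\xi\to\xi$ of sequences, and hence the identity $(1_{X}-u)\xi=\xi(1_{Z}-w)$ in the bimodule. Using the invertibility of $1_{X}-u$ and the identity $u\xi=\xi w$ (together with Lemma \ref{lem:Mac p71 0xi iso split and 1xi is xi and dual statements on other side} to handle the scalar actions by $1_{X}$ and $1_{Z}$), I would aim to show $\xi\tensor[_{X}]{\iso}{_{Z}}\xi_{0}$, contradicting the non-split hypothesis. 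This is the main obstacle: in the $\Hom$-finite setting (Proposition \ref{prop:ASS Lem IV.1.12 in Hom-finite cat endo u v w of non-split short exact seq X f Y g Z with EndX (EndZ) local and w (u) iso implies u (w) iso and if f or g semi-stable then v iso too}) one exploits nilpotency of $J(\End_{\CA}X)$ via $u^{n}\xi=\xi w^{n}$, and without $\Hom$-finiteness the substitute must come from the bimodule relations alone.

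To conclude that $v$ is then an automorphism, I would use the factorisation of $(u,v,w)$ through the pushout and pullback:
\[\xi\xrightarrow{(u,a,1_{Z})}u\xi\xrightarrow{(1_{X},\phi,1_{Z})}\xi w\xrightarrow{(1_{X},c,w)}\xi.\]
The middle arrow is an isomorphism of sequences, because $u\xi=\xi w$ produces an endomorphism $(1_{X},\phi,1_{Z})$ of the extension class, which is an isomorphism by Lemma \ref{lem:A preabelian morphism 1 v 1 xi to xi is an isomorphism if xi has semi-stable kernel or cokernel} (the kernel in $\xi$ is semi-stable because $\xi$ is stable). Since pushouts along the automorphism $u$ and pullbacks along the automorphism $w$ preserve the middle term up to canonical isomorphism, $a$ and $c$ are also isomorphisms. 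Therefore $v=c\circ\phi\circ a$ is a composite of isomorphisms. The dual statement (with $\End_{\CA}Z$ local and $u$ an automorphism) follows by a mirror-image argument using the right action of $\End_{\CA}Z$ on $\Ext_{\CA}^{1}(Z,X)$ and the semi-stability of the cokernel $g$.
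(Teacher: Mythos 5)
Your framework is the right one and matches the paper's: skeletal smallness makes $\Ext^{1}_{\CA}(Z,X)$ a set, stability of $\xi$ gives it the $(\End_{\CA}X,\End_{\CA}Z)$-bimodule structure via Theorem \ref{thm:Ext^1 is a left-right EndX-EndZ bimodule in preabelian category}, the commuting squares yield $u\xi=\xi w$, and your factorisation of $(u,v,w)$ through $u\xi$ and $\xi w$ together with Lemma \ref{lem:A preabelian morphism 1 v 1 xi to xi is an isomorphism if xi has semi-stable kernel or cokernel} is essentially how the paper deduces that $v$ is an automorphism. However, the heart of the proposition --- that $u$ is an automorphism --- is precisely the step you do not carry out: ``I would aim to show $\xi\tensor[_{X}]{\iso}{_{Z}}\xi_{0}$ \ldots\ this is the main obstacle'' names the problem rather than solving it, and the relation you do derive, $(1_{X}-u)\xi=\xi(1_{Z}-w)$, cannot close the argument on its own, because nothing forces its right-hand side to vanish. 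This is a genuine gap, not a presentational one.

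The missing idea --- the content of \cite[Lem.\ 2.13]{AuslanderReiten-Rep-theory-of-Artin-algebras-IV}, which the paper's proof imports --- is that locality of $\End_{\CA}X$ enters through the invertibility of $1_{X}-u$, not through any substitute for nilpotency, and that one must first arrange for the scalar acting on the right to be $1_{Z}$. The paper records the needed input as $u\xi\tensor[_{X}]{\iso}{_{Z}}\xi 1_{Z}=\xi$, obtained from \cite[Cor.\ 7]{RichmanWalker-ext-in-preabelian-cats}; this is exactly the configuration in which the proposition is invoked in the proof of Theorem \ref{thm: ASS IV.1.13 for KS quasi-abelian cat - xi X to Y to Z exact seq AR iff EndX local and g right almost split iff EndZ local and f right almost split iff f min left almost split iff g min right almost split iff EndX,EndZ local and f,g irreducible}, where one of the two outer vertical maps is an identity. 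Once one has $u\xi=\xi$ in $\Ext^{1}_{\CA}(Z,X)$, the argument is two lines: if $u$ were not an automorphism then $u\in J(\End_{\CA}X)$ and $1_{X}-u$ would be a unit, while the left module structure and Lemma \ref{lem:Mac p71 0xi iso split and 1xi is xi and dual statements on other side} give $(1_{X}-u)\xi=1_{X}\xi-u\xi=\xi-\xi=0$; acting by $(1_{X}-u)^{-1}$ then forces $\xi\tensor[_{X}]{\iso}{_{Z}}\xi_{0}$, contradicting non-splitness. You assembled every ingredient of this computation --- the invertibility of $1_{X}-u$, the module axioms, Lemma \ref{lem:Mac p71 0xi iso split and 1xi is xi and dual statements on other side} --- but applied them to the wrong identity, which is why the contradiction never materialised.
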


\begin{proof}
The proof is that of \cite{AuslanderReiten-Rep-theory-of-Artin-algebras-IV}
with the following adjustments. In order to show $u$ is an automorphism,
one needs that $u\xi\iso\xi1_{Z}=\xi$ and that $\Ext_{\CA}^{1}(Z,X)$
is a left $\End_{\CA}X$-module, which follow from \cite[Cor. 7]{RichmanWalker-ext-in-preabelian-cats}
and Theorem \ref{thm:Ext^1 is a left-right EndX-EndZ bimodule in preabelian category},
respectively. Lastly, an application of Lemma \ref{lem:A preabelian morphism 1 v 1 xi to xi is an isomorphism if xi has semi-stable kernel or cokernel}
yields that $v$ is also an automorphism.
\end{proof}
Since all short exact sequences are stable in a quasi-abelian category
(see Remark \ref{rem:kernels-semi-stable in right q-ab cat and dual - all seqs stable in q-abelian cat}),
we obtain a direct generalisation of \cite[Lem. 2.13]{AuslanderReiten-Rep-theory-of-Artin-algebras-IV}
as follows.
\begin{cor}
\label{cor:in quasi-abelian category endo u v w of non-split short exact seq X to Y to Z with EndX (EndZ) local and w (u) iso implies u (w) iso and v iso too}Let
$\CA$ be a skeletally small, quasi-abelian category. Suppose we have
a commutative diagram$$\begin{tikzcd}X\arrow{r}{f}\arrow{d}{u}&Y\arrow{r}{g}\arrow{d}{v}&Z\arrow{d}{w} \\ X\arrow{r}[swap]{f}&Y\arrow{r}[swap]{g}&Z\end{tikzcd}$$in
$\CA$, where $X\overset{f}{\to}Y\overset{g}{\to}Z$ is a non-split
short exact sequence. If $\End_{\CA}X$ (respectively, $\End_{\CA}Z$)
is local and $w$ (respectively, $u$) is an automorphism, then $u$
(respectively, $w$) and hence $v$ are automorphisms.
\end{cor}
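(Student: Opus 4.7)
The plan is to deduce this corollary immediately from Proposition \ref{prop:in preabelian cat endo u v w of non-split stable short exact seq X to Y to Z with EndX (EndZ) local and w (u) iso implies u (w) iso and v iso too}. The hypotheses of that proposition match those of the corollary verbatim, except that the proposition additionally requires the short exact sequence $X \overset{f}{\to} Y \overset{g}{\to} Z$ to be \emph{stable}. So the one thing to verify is that stability is automatic in the quasi-abelian setting.

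For this, I would appeal directly to Definition \ref{def:quasi-abelian category}: a quasi-abelian category is both left and right quasi-abelian, meaning kernels are stable under pushout and cokernels are stable under pullback. In the language of Definition \ref{def:semi-stable co-kernel}, this says precisely that every kernel in $\CA$ is semi-stable and every cokernel in $\CA$ is semi-stable. Given any short exact sequence $\xi\colon X \overset{f}{\to} Y \overset{g}{\to} Z$, both $f$ and $g$ are therefore semi-stable, so for arbitrary $a\colon X \to X'$ and $b\colon Z' \to Z$ the sequences $a\xi$ and $\xi b$ from Definition \ref{def:cocomplex a xi and xi c for a cocomplex xi} are again short exact. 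By Definition \ref{def:stable exact sequence}, $\xi$ is thus stable. This is precisely the content of Remark \ref{rem:kernels-semi-stable in right q-ab cat and dual - all seqs stable in q-abelian cat}.

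With stability established, the plan is to invoke Proposition \ref{prop:in preabelian cat endo u v w of non-split stable short exact seq X to Y to Z with EndX (EndZ) local and w (u) iso implies u (w) iso and v iso too} for the preabelian category $\CA$ (a quasi-abelian category is in particular preabelian, and the skeletally small assumption guarantees that $\Ext_{\CA}^{1}(Z,X)$ is a set by Remark \ref{rem:need Ext(Z X) to be a set to be a group}, so the bimodule structure from Theorem \ref{thm:Ext^1 is a left-right EndX-EndZ bimodule in preabelian category} used in that proof is available). Applied to our commutative diagram, the proposition returns that $u$ (respectively, $w$) and hence $v$ are all automorphisms. Since the argument is a direct specialisation, there is no substantive obstacle to navigate: the whole proof reduces to citing the previous proposition together with the observation that the quasi-abelian hypothesis absorbs the stability hypothesis.
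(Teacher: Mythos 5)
Your proposal is correct and matches the paper's own argument: the corollary is obtained by combining Proposition \ref{prop:in preabelian cat endo u v w of non-split stable short exact seq X to Y to Z with EndX (EndZ) local and w (u) iso implies u (w) iso and v iso too} with the observation (Remark \ref{rem:kernels-semi-stable in right q-ab cat and dual - all seqs stable in q-abelian cat}) that every short exact sequence in a quasi-abelian category is stable. Your unpacking of why stability is automatic is a faithful elaboration of that remark, so there is nothing to add.
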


The following definition has been given by Auslander for abelian
categories (see \cite[p. 292]{Auslander-Rep-theory-of-Artin-algebras-I-II}),
but we may make the same definition for additive categories and are
able to derive some of the same consequences.
\begin{defn}
\label{def:minimal element of F(X) where X in A additive and F additive covariant functor from A to cat of abelian groups, and dual definition for contravariant functor}\cite[p. 292]{Auslander-Rep-theory-of-Artin-algebras-I-II}
Let $\CA$ be an additive category with an object $X$. Suppose $\SF\colon\CA\to\Ab$
is a covariant additive functor to the category $\Ab$ of all abelian
groups. An element $x\in\SF(X)$ is said to be \emph{minimal} if $x\neq0$
(where $0$ is the identity element of the abelian group $\SF(X)$),
and if for all proper epimorphisms $f\colon X\to Y$ in $\CA$, we
have that $\SF(f)\colon\SF(X)\to\SF(Y)$ satisfies $\SF(f)(x)=0$.

A definition of minimal can be made for a contravariant functor $\SG\colon\CA\to\Ab$
by considering $\SG$ as a covariant functor $\CA^{\op}\to\Ab$. 
\end{defn}

An immediate result is a version of \cite[p. 292, Lem. 3.2 (a)]{Auslander-Rep-theory-of-Artin-algebras-I-II}
for additive categories:
\begin{prop}
\label{prop:additive cat A, F functor A to Ab, F(X) has minimal element implies X indecomposable}Let
$\CA$ be an additive category with an object $X$. Suppose $\SF\colon\CA\to\Ab$
is a covariant additive functor. If $\SF(X)$ has a minimal element,
then $X$ is indecomposable in $\CA$.
\end{prop}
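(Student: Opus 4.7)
The plan is to argue by contrapositive: assume $X$ decomposes non-trivially as $X\iso X_{1}\oplus X_{2}$ with $X_{1},X_{2}\neq 0$, and deduce that no element of $\SF(X)$ can be minimal. Indeed, we should first dispense with the trivial case $X=0$: since $\SF$ is additive we have $\SF(0)=0$, so $\SF(X)$ contains no nonzero element and in particular no minimal element, consistent with the convention that $0$ is not regarded as indecomposable.

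Next, suppose $X\iso X_1\oplus X_2$ with both summands nonzero, and let $\iota_i\colon X_i\to X$ and $\pi_i\colon X\to X_i$ ($i=1,2$) be the canonical inclusions and projections, so that $\iota_1\pi_1+\iota_2\pi_2=1_X$. Each $\pi_i$ is a retraction, and since $X_{3-i}\neq 0$, neither $\pi_i$ is an isomorphism; hence each $\pi_i$ is a \emph{proper} epimorphism in the sense required by Definition \ref{def:minimal element of F(X) where X in A additive and F additive covariant functor from A to cat of abelian groups, and dual definition for contravariant functor}. The main step is then to take an arbitrary $x\in\SF(X)$ and apply the additivity of $\SF$ to the identity $\iota_1\pi_1+\iota_2\pi_2=1_X$, which gives
\[
x=\SF(1_X)(x)=\SF(\iota_1)\bigl(\SF(\pi_1)(x)\bigr)+\SF(\iota_2)\bigl(\SF(\pi_2)(x)\bigr).
\]
If $x$ were minimal, then by Definition \ref{def:minimal element of F(X) where X in A additive and F additive covariant functor from A to cat of abelian groups, and dual definition for contravariant functor} we would have $\SF(\pi_1)(x)=0=\SF(\pi_2)(x)$, forcing $x=0$, which contradicts the nonvanishing requirement of minimality.

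There is no serious obstacle here; the argument is essentially the same as in \cite[p.~292, Lem.~3.2(a)]{Auslander-Rep-theory-of-Artin-algebras-I-II}. The only point that needs attention is verifying that in an arbitrary additive category a non-trivial split projection qualifies as a \emph{proper} epimorphism, i.e. is a non-isomorphism, which follows from the assumption that both summands are nonzero (equivalently from Lemma \ref{lem:add cat with split idems, X indecomp and Y neq 0, then f X to Y retraction implies f isomorphism - and dual statement} applied in the contrapositive direction). Once this is observed, the argument is immediate from additivity of $\SF$ and the definition of minimality, and no further hypotheses on $\CA$ (such as split idempotents or the existence of (co)kernels) are required.
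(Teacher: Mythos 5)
Your argument is correct and is essentially identical to the paper's own proof: both decompose $X=X_1\oplus X_2$ with nonzero summands, observe that the projections $\pi_i$ are proper epimorphisms, and apply additivity of $\SF$ to $\iota_1\pi_1+\iota_2\pi_2=1_X$ to force a minimal element to vanish. The only (harmless) addition is your explicit treatment of the case $X=0$.
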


\begin{proof}
Assume $x\in\SF(X)$ is minimal, and that $X=X_{1}\oplus X_{2}$ with
$X_{1},X_{2}$ both non-zero. Let $\iota_{i}\colon X_{i}\into X$
and $\pi_{i}\colon X\onto X_{i}$ be the canonical inclusion and projection morphisms, respectively, for $i=1,2$. Note that $\pi_{i}$
is a proper epimorphism for $i=1,2$ since $X_{1}$ and $X_{2}$ are
non-zero. Therefore, $\SF(\pi_{i})(x)=0$ for $i=1,2$ as $x$ is
minimal. However, this implies 
\[
\begin{array}{rcll}
x & = & 1_{\SF(X)}(x)=\SF(1_{X})(x) & \text{as }\SF\text{ is a functor}\\
 & = & \SF(\iota_{1}\pi_{1}+\iota_{2}\pi_{2})(x) & \text{as }X=X_{1}\oplus X_{2}\\
 & = & \SF(\iota_{1}\pi_{1})(x)+\SF(\iota_{2}\pi_{2})(x) & \text{as }\SF\text{ is additive}\\
 & = & \SF(\iota_{1})(\SF(\pi_{1})(x))+\SF(\iota_{2})(\SF(\pi_{2})(x)) & \text{as }\SF\text{ is covariant}\\
 & = & 0 & \text{since }\SF(\pi_{i})(x)=0.
\end{array}
\]
This is a contradiction because $x\neq0$ since it is minimal. Hence,
$X$ must be indecomposable.
\end{proof}
The next proposition generalises \cite[Prop. 2.6 (b)]{AuslanderReiten-Rep-theory-of-Artin-algebras-IV}
to a semi-abelian setting. The strategy in the proof is the same,
but we need a technical result from \cite{Rump-almost-abelian-cats}
in order to work in a category with less structure. Note that if $\CA$ skeletally small, then 
$\Ext_{\CA}^{1}(-,-)$ is an additive bifunctor (see \cite[\S4]{RichmanWalker-ext-in-preabelian-cats}, or \cite[p. 267]{Cooper-stable-seqs-in-preabelian-cats}).
\begin{prop}
\label{prop:A semi-abelian xi X f Y g Z short exact - f irred implies xi min and Z indecomp - and dually for g}Let
$\CA$ be a skeletally small, semi-abelian category with objects $X,Z$.
Consider the covariant additive functor $\Ext_{\CA}^{1}(Z,-)\colon\CA\to\Ab$
and the contravariant additive functor $\Ext_{\CA}^{1}(-,X)\colon\CA\to\Ab$.
Suppose $\xi\colon X\overset{f}{\to}Y\overset{g}{\to}Z$ is an element
of $\Ext_{\CA}^{1}(Z,X)$.\emph{\begin{enumerate}[(i)]
\item \emph{If $f$ is irreducible, then $\xi\in\Ext_{\CA}^{1}(-,X)(Z)$ is minimal, and hence $Z$ is indecomposable.}
\item \emph{If $g$ is irreducible, then $\xi\in\Ext_{\CA}^{1}(Z,-)(X)$ is minimal, and hence $X$ is indecomposable.}
\end{enumerate}}
\end{prop}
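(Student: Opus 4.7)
The plan is to establish part (i); part (ii) is completely dual. In either case, once minimality of $\xi$ as an element of the relevant $\Ext_{\CA}^{1}$-group is proved, indecomposability of the respective object follows from Proposition \ref{prop:additive cat A, F functor A to Ab, F(X) has minimal element implies X indecomposable} (applied in case (i) to $\Ext_{\CA}^{1}(-,X)$ regarded as a covariant functor $\CA^{\op}\to\Ab$). The easy half of minimality is $\xi\neq 0$: since $f$ is irreducible it is not a section, so the Splitting Lemma (Proposition \ref{prop:Splitting-Lemma in additive category}) shows that $\xi$ is non-split, and hence represents a non-zero class in $\Ext_{\CA}^{1}(Z,X)$.

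For the substantive half, let $h\colon W\to Z$ be an arbitrary proper monomorphism (equivalently, a proper epimorphism in $\CA^{\op}$) and form the pullback diagram defining $\xi h$ (cf.\ Definition \ref{def:cocomplex a xi and xi c for a cocomplex xi}):
$$\begin{tikzcd}[column sep=1.3cm]
X \arrow{r}{f'}\arrow[equal]{d} & Y' \arrow{r}{g'}\arrow{d}{h'} \PB{dr} & W \arrow{d}{h} \\
X \arrow{r}{f} & Y \arrow{r}{g} & Z
\end{tikzcd}$$
with $h'f' = f$ and $g'f' = 0$. Since $\xi$ is stable, $\xi h\colon X\overset{f'}{\to} Y' \overset{g'}{\to} W$ is short exact and so $g' = \cok f'$. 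Applying irreducibility of $f$ to the factorisation $f = h'f'$ yields two cases: either $f'$ is a section (in which case $\xi h$ splits by the Splitting Lemma and we are done) or $h'$ is a retraction (in which case a contradiction must be extracted from the properness of $h$).

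I expect this last case to be the main obstacle: in a semi-abelian category, a morphism that is simultaneously monic and epic need not be an isomorphism, so the naive abelian-style argument does not close. The trick is to upgrade $h$ from being merely epic to being a retraction. Monomorphisms are stable under pullback, so $h'$ is monic; a monic retraction is an isomorphism (as used inside the proof of Lemma \ref{lem:epic kernel or monic cokernel is an isomorphism}), hence $h'$ is in fact an isomorphism. Setting $\sigma\deff g'\circ (h')^{-1}\colon Y\to W$, the pullback commutativity $hg' = gh'$ gives $h\sigma = g$; in particular $h\sigma f = gf = 0$, and since $h$ is monic we deduce $\sigma f = 0$. Using $g = \cok f$, the map $\sigma$ factors uniquely as $\sigma = \tilde\sigma g$ for some $\tilde\sigma\colon Z\to W$; then $h\tilde\sigma g = h\sigma = g$, and cancelling the epimorphism $g$ yields $h\tilde\sigma = 1_Z$. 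Thus $h$ is a monic retraction, hence an isomorphism, contradicting properness. This establishes minimality and hence (i).

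Part (ii) runs dually: one considers the pushout of a proper epimorphism $h\colon X\to W$ along $f$, uses that pushouts preserve epimorphisms together with the dual fact that an epic section is an isomorphism, and factors through $\ker g = f$ in place of $\cok f = g$ to conclude that $h$ is a section, and hence again an isomorphism, producing the required contradiction.
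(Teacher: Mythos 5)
Your proof is correct, and in the decisive step it takes a genuinely different route from the paper's. The paper argues part (ii) (the dual of the part you write out): after forming the pushout of $\xi$ along a proper epimorphism $a\colon X\to X_{1}$ and applying irreducibility of $g$ to $g=g_{1}b$, it disposes of the bad case ($b$ a section, with retraction $r$) by observing that $(rf_{1})a=f=\ker g$ and invoking Rump's result that in a right semi-abelian category the first factor of a kernel is again a kernel---this is exactly where the semi-abelian hypothesis enters---so that $a$ is an epic kernel and hence an isomorphism by Lemma \ref{lem:epic kernel or monic cokernel is an isomorphism}, a contradiction. You instead handle the corresponding bad case ($h'$ a retraction) by noting that $h'$ is monic because monomorphisms are stable under pullback, hence $h'$ is an isomorphism, and then you manufacture an explicit splitting: factoring $\sigma=g'(h')^{-1}$ through $\cok f=g$ gives $\tilde{\sigma}$ with $h\tilde{\sigma}=1_{Z}$, so $h$ is a monic retraction and therefore an isomorphism, contradicting properness. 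Every step of your argument uses only that $\CA$ is preabelian and that $\xi$ is stable (pullbacks preserve monomorphisms, pushouts preserve epimorphisms, and the (co)kernel property of the original sequence), so it in fact proves the proposition for any skeletally small preabelian category; the paper's version trades that extra generality for brevity by leaning on Rump's structural results for semi-abelian categories. Your dualisation sketch for part (ii) is also the correct one.
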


\begin{proof}
We prove (ii); the proof for (i) is similar. Suppose $g$ is irreducible
in the stable exact sequence $\xi\colon X\overset{f}{\to}Y\overset{g}{\to}Z$.
Since $g$ is not a retraction, by the Splitting Lemma (Proposition
\ref{prop:Splitting-Lemma in additive category}) we know $\xi$ is
not split and hence $\xi\neq0$ in $\Ext_{\CA}^{1}(Z,X)$. Suppose
$a\colon X\to X_{1}$ is a proper epimorphism. We will show that $\Ext_{\CA}^{1}(Z,a)(\xi)=a\xi=0$,
i.e. the short exact sequence $a\xi$ is split. By definition, $a\xi$
comes with some commutative diagram$$\begin{tikzcd} \xi \colon & X \arrow{r}{f}\arrow{d}{a}& Y\arrow{r}{g} \arrow{d}{b}& Z \arrow[equals]{d} \\
a\xi\colon & X_1 \arrow{r}{f_1}& Y_1\arrow{r}{g_1}\PO{ul} & Z\end{tikzcd}$$where the left square is a pushout square. Thus, $g=g_{1}b$ and so
$g_{1}$ is a retraction or $b$ is a section as $g$ is assumed to
be irreducible.

Assume, for contradiction, that $b$ is a section. Then there exists
$r\colon Y_{1}\to Y$ such that $rb=1_{Y}$. This yields $(rf_{1})a=rbf=f=\ker g$.
As $\CA$ is (right) semi-abelian, we have that $a$ is also a kernel
by \cite[Prop. 2]{Rump-almost-abelian-cats}. Therefore,
$a$ is an epic kernel and hence an isomorphism by Lemma \ref{lem:epic kernel or monic cokernel is an isomorphism},
which contradicts that $a$ is a proper epimorphism. Hence, $b$ cannot
be a section.

Thus, $g_{1}$ must be a retraction, whence $a\xi\colon X_{1}\overset{f_{1}}{\longrightarrow}Y_{1}\overset{g_{1}}{\longrightarrow}Z$
is split (Proposition \ref{prop:Splitting-Lemma in additive category}
again) and $a\xi=0$ in $\Ext_{\CA}^{1}(Z,-)(X_{1})$. Since $a\colon X\to X_{1}$
was an arbitrary proper epimorphism, we see that $\xi$ is a minimal
element of $\Ext_{\CA}^{1}(Z,-)(X)$ and that $X$ is indecomposable
by Proposition \ref{prop:additive cat A, F functor A to Ab, F(X) has minimal element implies X indecomposable}.
\end{proof}
We also observe that \cite[Prop. 2.11]{AuslanderReiten-Rep-theory-of-Artin-algebras-IV}
remains valid in a more general situation:
\begin{prop}
\label{prop:A preabelian, f X to Y irred, A arb object then (i) Hom(Y,A) is 0 implies 0 to Ext(A,X) to Ext(A,Y) exact (ii) Hom(A,X) is 0 implies 0 to Ext(Y,A) to Ext(X,A)}Let
$\CA$ be a skeletally small, preabelian category. Suppose $f\colon X\to Y$
is an irreducible morphism in $\CA$ and let $Z$ be an object of
$\CA$.\emph{\begin{enumerate}[(i)]
\item \emph{If $\Hom_{\CA}(Y,Z)=0$, then $\begin{tikzcd}[column sep=1.6cm, ampersand replacement = \&] 0 \arrow{r} \& \Ext_{\CA}^{1}(Z,X) \arrow{r}{\Ext_{\CA}^{1}(Z,f)} \& \Ext_{\CA}^{1}(Z,Y) \end{tikzcd}$ is exact.}
\item \emph{If $\Hom_{\CA}(Z,X)=0$, then $\begin{tikzcd}[column sep=1.6cm, ampersand replacement = \&] 0 \arrow{r} \& \Ext_{\CA}^{1}(Y,Z) \arrow{r}{\Ext_{\CA}^{1}(f,Z)} \& \Ext_{\CA}^{1}(X,Z) \end{tikzcd}$ is exact.}
\end{enumerate}}
\end{prop}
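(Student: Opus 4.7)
The plan is to prove (i) directly and remark that (ii) follows by a dual argument. Part (i) asserts that the induced map $\Ext_{\CA}^{1}(Z,f)$ is injective, so the goal is: if $\xi \in \Ext_{\CA}^{1}(Z,X)$ is represented by a stable exact sequence $\xi\colon X\overset{a}{\to}M\overset{b}{\to}Z$ and $f\xi$ is the split class, then $\xi$ itself must be split.

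First, I would unpack the definition of $f\xi$ from Definition \ref{def:cocomplex a xi and xi c for a cocomplex xi}. Forming the pushout of $f$ along $a$ produces morphisms $\alpha\colon Y\to Y\bincoprod_{X}M$ and $\beta\colon M\to Y\bincoprod_{X}M$ with $\alpha f=\beta a$, and a unique $\gamma\colon Y\bincoprod_X M\to Z$ with $\gamma\alpha=0$ and $\gamma\beta=b$. Thus $f\xi$ is the sequence $Y\overset{\alpha}{\to}Y\bincoprod_{X}M\overset{\gamma}{\to}Z$; since $\xi$ is stable, this is short exact. Now the hypothesis $f\xi=0$ in $\Ext_{\CA}^{1}(Z,Y)$ together with Proposition \ref{prop:Splitting-Lemma in additive category} yields a retraction $r\colon Y\bincoprod_{X}M\to Y$ with $r\alpha=1_{Y}$.

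The key trick is to consider the composite $r\beta\colon M\to Y$: we have $(r\beta)a=r\alpha f = f$, i.e. $f$ factors through $a$. Because $f$ is irreducible (Definition \ref{def:irreducible morphism}(iii)), either $a$ is a section---in which case the Splitting Lemma gives $\xi=0$, finishing the argument---or $r\beta$ is a retraction. I would then rule out the second alternative as follows: pick $t\colon Y\to M$ with $(r\beta)t=1_{Y}$ and consider $bt\colon Y\to Z$. The assumption $\Hom_{\CA}(Y,Z)=0$ forces $bt=0$, and since $\xi$ is short exact we have $a=\ker b$, so $t=at'$ for some $t'\colon Y\to X$. Substituting gives $1_Y=r\beta a t'=ft'$, making $f$ a section---contradicting irreducibility.

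For (ii), the dual argument starts with $\xi\colon Z\overset{a}{\to}M\overset{b}{\to}Y$, uses the pullback of $b$ along $f$ to describe $\xi f$, obtains a section of the resulting sequence to get a map $\sigma\colon X\to M$ with $b\sigma=f$, and invokes irreducibility of $f$: the alternative ``$b$ is a retraction'' gives $\xi=0$ by the Splitting Lemma, while ``$\sigma$ is a section'' combined with $\Hom_{\CA}(Z,X)=0$ and $b=\cok a$ forces $f$ to be a retraction, a contradiction. The main obstacle---which is really the only nontrivial step---is spotting that the split retraction of $f\xi$ (respectively, section of $\xi f$) produces the factorization of $f$ needed to apply Definition \ref{def:irreducible morphism}(iii); everything else is bookkeeping on (co)kernels and the universal properties of pushouts/pullbacks in the preabelian setting.
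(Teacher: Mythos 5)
Your argument is correct and is essentially the arrow-theoretic translation of Auslander--Reiten's original proof that the paper itself invokes for this proposition: push out (respectively pull back) along $f$, use the splitting of $f\xi$ (respectively $\xi f$) to factor $f$ through the structural morphism of $\xi$, apply irreducibility, and use the $\Hom$-vanishing hypothesis together with $a=\ker b$ (respectively $b=\cok a$) to rule out the remaining branch. The only blemish is a terminology swap in the final contradictions: in (i) the identity $ft'=1_Y$ makes $f$ a \emph{retraction} (not a section), and dually in (ii) one obtains that $f$ is a \emph{section} (not a retraction) --- either way the conclusion contradicts irreducibility, so the proof stands.
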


\begin{proof}
This is an arrow-theoretic translation of the proof from \cite{AuslanderReiten-Rep-theory-of-Artin-algebras-IV}.
\end{proof}
The next result is an analogue of \cite[Prop. 2.8]{AuslanderReiten-Rep-theory-of-Artin-algebras-IV}
for which we need the theory of subobjects in an abelian category.
Recall that two monomorphisms $i_{1}\colon X_{1}\to X$ and $i_{2}\colon X_{2}\to X$
in an abelian category are said to be \emph{equivalent} if there is
an isomorphism $f\colon X_{1}\overset{\iso}{\longrightarrow}X_{2}$
such that $i_{1}=i_{2}\circ f$. Then a \emph{subobject} of an object
$X$ in an abelian category is an equivalence class of monomorphisms
into $X$. Furthermore, if $i\colon V\to X$ and $j\colon W\to X$
are representatives of subobjects of $X$, then we write $V\subseteq W$
if there exists a morphism $g\colon V\to W$ such that $i=j\circ g$.
See \cite{Krause-KS-cats-and-projective-covers} or \cite{Freyd-abelian-cats}
for more details.
\begin{prop}
\label{prop:AR IV Prop. 2.8 f X to Y, g Y to Z then (i) f irred kernel iff for all M subfunctor of (-,Z) M contains or contained in Im(-,g) and (ii) dual statement}Let
$\CA$ be a skeletally small, quasi-abelian category and suppose $X\overset{f}{\to}Y\overset{g}{\to}Z$
is a non-split short exact sequence in $\CA$.\emph{\begin{enumerate}[(i)]
\item \emph{The morphism} $f$ \emph{is irreducible if and only if, for any subobject} $\SF$ \emph{of} $\Hom_{\CA}(-,Z)$\emph{, we have either} $\SF$ \emph{contains or is contained in} $\Image(\Hom_{\CA}(-,g))$\emph{, the image of the natural transformation} $\Hom_{\CA}(-,g)\colon \Hom_{\CA}(-,Y)\to \Hom_{\CA}(-,Z)$\emph{.}
\item \emph{The morphism} $g$ \emph{is irreducible if and only if, for any subobject} $\SF$ \emph{of} $\Hom_{\CA}(X,-)$\emph{, we have either} $\SF$ \emph{contains or is contained in} $\Image(\Hom_{\CA}(f,-))$\emph{, the image of the natural transformation} $\Hom_{\CA}(f,-)\colon \Hom_{\CA}(Y,-)\to \Hom_{\CA}(X,-)$\emph{.}
\end{enumerate}}
\end{prop}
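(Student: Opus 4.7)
The plan is to prove only part (i), as part (ii) is entirely dual. The critical input is Proposition \ref{prop:ASS Lem IV.1.7(a) or AR IV Prop 2.7 (a) for quasi-abelian category - criterion for irreducible if f is a kernel}: since $g = \cok f$ in the short exact sequence $X \overset{f}{\to} Y \overset{g}{\to} Z$ and the sequence is non-split, this result characterises irreducibility of $f$ via the following dichotomy on morphisms $v \colon V \to Z$: either $v$ factors through $g$ (as $v = g v_1$), or $g$ factors through $v$ (as $g = v v_2$). The task is to translate this pointwise dichotomy into the functorial dichotomy about subfunctors of $\Hom_\CA(-,Z)$.

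For the forward direction, I would assume $f$ is irreducible and let $\SF$ be a subfunctor of $\Hom_\CA(-,Z)$. Suppose $\SF \not\subseteq \Image(\Hom_\CA(-,g))$. Then there exist $V \in \CA$ and $v \in \SF(V)$ such that $v$ does not factor through $g$; by Proposition \ref{prop:ASS Lem IV.1.7(a) or AR IV Prop 2.7 (a) for quasi-abelian category - criterion for irreducible if f is a kernel} this forces the existence of $v_2 \colon Y \to V$ with $g = v v_2$. Then for any $W$ and any $u \in \Image(\Hom_\CA(-,g))(W)$, write $u = g u'$ for some $u' \colon W \to Y$ and compute $u = v v_2 u' = \SF(v_2 u')(v)$, which lies in $\SF(W)$ because $\SF$ is a subfunctor of $\Hom_\CA(-,Z)$ and $v \in \SF(V)$. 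Hence $\Image(\Hom_\CA(-,g)) \subseteq \SF$, giving the required dichotomy.

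For the converse, suppose the subfunctor dichotomy holds. By Lemma \ref{lem:non-split short exact sequence implies neither map is section or retraction}, $f$ is neither a section nor a retraction, so to conclude irreducibility via Proposition \ref{prop:ASS Lem IV.1.7(a) or AR IV Prop 2.7 (a) for quasi-abelian category - criterion for irreducible if f is a kernel} it remains to verify the factoring dichotomy for an arbitrary $v \colon V \to Z$. The plan is to feed the hypothesis the principal subfunctor $\SF_v \deff \Image(\Hom_\CA(-,v))$, whose value at $W$ is $\{v h \mid h \colon W \to V\}$. If $\SF_v \subseteq \Image(\Hom_\CA(-,g))$, then $v = v \circ 1_V \in \SF_v(V)$ lies in $\Image(\Hom_\CA(-,g))(V)$, so $v$ factors through $g$. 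If instead $\Image(\Hom_\CA(-,g)) \subseteq \SF_v$, then $g = g \circ 1_Y \in \Image(\Hom_\CA(-,g))(Y) \subseteq \SF_v(Y)$, producing $v_2 \colon Y \to V$ with $g = v v_2$. Either alternative matches exactly one side of the required criterion, so $f$ is irreducible.

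The argument is not technically demanding once Proposition \ref{prop:ASS Lem IV.1.7(a) or AR IV Prop 2.7 (a) for quasi-abelian category - criterion for irreducible if f is a kernel} is available. The main conceptual step is identifying the correct test subfunctor in the converse: the principal subfunctor $\Image(\Hom_\CA(-,v))$ generated by a single morphism $v$ is the functorial avatar of that morphism, and Yoneda-style evaluations at $V$ and $Y$ extract the two horns of the dichotomy directly. The hypothesis that $\CA$ is quasi-abelian is used only through the invoked proposition, so no further delicacy regarding kernels or cokernels enters this argument.
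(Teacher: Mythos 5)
Your proof is correct and follows essentially the same route as the paper, which simply defers to the original Auslander--Reiten argument: both directions hinge on the factorisation dichotomy of Proposition \ref{prop:ASS Lem IV.1.7(a) or AR IV Prop 2.7 (a) for quasi-abelian category - criterion for irreducible if f is a kernel}, with the principal subfunctor $\Image(\Hom_{\CA}(-,v))$ as the test object in the converse. The only point worth flagging is that skeletal smallness (not quasi-abelianness) is what guarantees the functor category is abelian, so that subobjects of $\Hom_{\CA}(-,Z)$ are legitimately identified with subfunctors and images exist; your argument uses this implicitly.
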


\begin{proof}
Note that the category of functors $\CA\to\Ab$ is abelian as $\CA$
is skeletally small (see \cite[Thm. 10.1.3]{Prest-purity-spectra-localisation}).
The proof is identical to that for \cite[Prop. 2.8]{AuslanderReiten-Rep-theory-of-Artin-algebras-IV},
noting that we may use \cite[Prop. 10.1.13]{Prest-purity-spectra-localisation},
and Propositions \ref{prop:ASS Lem IV.1.7(a) or AR IV Prop 2.7 (a) for quasi-abelian category - criterion for irreducible if f is a kernel}
and \ref{prop:ASS Lem IV.1.7(b) or AR IV Prop 2.7 (b) for quasi-abelian category - criterion for irreducible if f is a cokernel}.
\end{proof}
Proposition \ref{prop:AR IV Cor 2.9 (a) for quasi-abelian category}
and its dual, Proposition \ref{prop:AR IV Cor 2.9 (b) for quasi-abelian category},
below give analogues of one direction of parts (a) and (b) of \cite[Cor. 2.9]{AuslanderReiten-Rep-theory-of-Artin-algebras-IV}
for quasi-abelian categories. There is an obvious method to prove
(ii) in light of Proposition \ref{prop:ASS Lem IV.1.7(a) or AR IV Prop 2.7 (a) for quasi-abelian category - criterion for irreducible if f is a kernel},
but the details are omitted in \cite{AuslanderReiten-Rep-theory-of-Artin-algebras-IV}
so we include them here for completeness.
\begin{prop}
\label{prop:AR IV Cor 2.9 (a) for quasi-abelian category}Let $\CA$
be a quasi-abelian category and suppose $\xi\colon X\overset{f}{\to}Y\overset{g}{\to}Z$
is a non-split short exact sequence in $\CA$. Suppose $f$ is irreducible.
Then the following statements hold.\emph{\begin{enumerate}[(i)]
\item \emph{For any proper subobject} $\iota \colon Y'\into Y$ \emph{such that} $\Image f\subseteq Y'$ \emph{given by a monomorphism} $s\colon \Image f \into Y'$\emph{, we have that} $s$ \emph{is a section.}
\item \emph{For any short exact sequence} $\xi '\colon A\overset{a}{\to} B\overset{b}{\to} Z$\emph{, either there exists} $j\colon A\to X$ \emph{such that} $j\xi '=\xi$ \emph{or there exists} $i\colon X\to A$ \emph{such that} $\xi i=\xi '$\emph{.}
\end{enumerate}}
\end{prop}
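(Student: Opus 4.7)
The plan is to handle (i) directly from irreducibility of $f$ together with the fact that $f$, being a kernel, coincides with its image up to unique isomorphism; and to handle (ii) by applying Proposition \ref{prop:AR IV Prop. 2.8 f X to Y, g Y to Z then (i) f irred kernel iff for all M subfunctor of (-,Z) M contains or contained in Im(-,g) and (ii) dual statement}(i) to an appropriately chosen subfunctor of $\Hom_{\CA}(-,Z)$.

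For (i), since $f=\ker g$, Lemma \ref{lem:ker is ker of its cok - cok is cok of its ker}(i) allows us to identify $\image f$ with $f$ (so $\Image f$ with $X$), and the hypothesis becomes $f=\iota\circ s$ with $\iota\colon Y'\to Y$ a proper monomorphism and $s\colon X\to Y'$ monic. Irreducibility of $f$ then forces either $s$ to be a section or $\iota$ to be a retraction; but a morphism that is both a monomorphism and a retraction must be an isomorphism (if $\iota r=1_{Y}$, then from $\iota r\iota=\iota=\iota 1_{Y'}$ we deduce $r\iota=1_{Y'}$ by left-cancelling $\iota$), contradicting $\iota$ being proper. Hence $s$ is a section.

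For (ii), I would set $\SF\deff\Image(\Hom_{\CA}(-,b))$, a subfunctor of $\Hom_{\CA}(-,Z)$. By Proposition \ref{prop:AR IV Prop. 2.8 f X to Y, g Y to Z then (i) f irred kernel iff for all M subfunctor of (-,Z) M contains or contained in Im(-,g) and (ii) dual statement}(i), either $\SF\subseteq\Image(\Hom_{\CA}(-,g))$ or $\SF\supseteq\Image(\Hom_{\CA}(-,g))$. In the first subcase, evaluating at $B$ on $1_{B}$ shows that $b=g\phi$ for some $\phi\colon B\to Y$; then $g\phi a=ba=0$ together with $f=\ker g$ yields a unique $j\colon A\to X$ with $fj=\phi a$, producing a morphism $(j,\phi,1_{Z})\colon\xi'\to\xi$ of short exact sequences. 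In the second subcase, evaluating at $Y$ on $1_{Y}$ gives $g=b\psi$ for some $\psi\colon Y\to B$; then $b\psi f=gf=0$ together with $a=\ker b$ yields a unique $i\colon X\to A$ with $ai=\psi f$, producing $(i,\psi,1_{Z})\colon\xi\to\xi'$.

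The final step, and the main technical obstacle, is to upgrade these morphisms of short exact sequences to the extension-group equalities $j\xi'=\xi$ and $\xi i=\xi'$; this is where the quasi-abelian hypothesis on $\CA$ plays its role. Concretely, the universal property of the pushout $j\xi'$ factors $(j,\phi,1_{Z})$ through $j\xi'$, producing a morphism of the form $(1_{X},-,1_{Z})\colon j\xi'\to\xi$; by Remark \ref{rem:kernels-semi-stable in right q-ab cat and dual - all seqs stable in q-abelian cat}, every kernel in $\CA$ is semi-stable, so Lemma \ref{lem:A preabelian morphism 1 v 1 xi to xi is an isomorphism if xi has semi-stable kernel or cokernel} promotes this morphism to an isomorphism, yielding $j\xi'=\xi$. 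The second case is handled dually using that every cokernel in $\CA$ is semi-stable.
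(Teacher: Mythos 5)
Your proof of part (i) is essentially the paper's own: identify $\image f$ with $f$ via Lemma \ref{lem:ker is ker of its cok - cok is cok of its ker}, apply irreducibility to the factorisation $f=\iota s$, and exclude the case that $\iota$ is a retraction because a monic retraction is an isomorphism.

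For part (ii) you take a genuinely different route, and it is correct in substance but needs two repairs. The dichotomy you extract from Proposition \ref{prop:AR IV Prop. 2.8 f X to Y, g Y to Z then (i) f irred kernel iff for all M subfunctor of (-,Z) M contains or contained in Im(-,g) and (ii) dual statement} --- either $b$ lifts through $g$ or $g$ factors through $b$ --- is exactly the conclusion of Proposition \ref{prop:ASS Lem IV.1.7(a) or AR IV Prop 2.7 (a) for quasi-abelian category - criterion for irreducible if f is a kernel} applied to $v=b\colon B\to Z$, and that is what the paper uses. Routing through the subfunctor criterion instead silently imports a skeletal smallness hypothesis (needed there so that the functor category is abelian), which is not present in the statement you are proving; citing the pointwise criterion directly removes it. The second divergence is at the final step: the paper notes that in the morphism $(j,\phi,1_{Z})\colon\xi'\to\xi$ the horizontal maps $a,f$ are kernels and the right-hand vertical map is an isomorphism, so the dual of Rump's Proposition~5 makes the left-hand square a pushout outright, giving $j\xi'=\xi$ with no comparison map to invert. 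Your alternative --- factor $(j,\phi,1_{Z})$ through the pushout and show the induced $(1_{X},-,1_{Z})\colon j\xi'\to\xi$ is an isomorphism --- also works, but Lemma \ref{lem:A preabelian morphism 1 v 1 xi to xi is an isomorphism if xi has semi-stable kernel or cokernel} as stated in the paper covers only endomorphisms of a single sequence; for a comparison map between two distinct sequences with the same end-terms you should invoke Richman--Walker's Theorem~6 itself (which applies here, all sequences being stable in a quasi-abelian category). With those two citations corrected, your argument goes through; the paper's version is shorter because the pushout is recognised directly rather than reconstructed and compared.
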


\begin{proof}
Suppose $f$ is irreducible. To show (i) holds, we assume $\iota\colon Y'\to Y$
is a proper monomorphism and $s\colon\Image f\to Y'$ is a monomorphism
such that $\im f=\iota s$. Since $\CA$ is preabelian and $\xi$
is short exact, we have that $f=\ker g=\ker(\cok f)=\image f$ by
Lemma \ref{lem:ker is ker of its cok - cok is cok of its ker}. Therefore,
$f=\iota s$ and hence either $\iota$ is a retraction or $s$ is
a section. If $\iota$ is a retraction then it would be a monic retraction,
and hence an isomorphism by \cite[Thm. I.1.5]{McLarty-elementary-categories-elementary-toposes}.
But this contradicts our assumption on $\iota$, so we must have that
$s$ is a section.

For (ii), if $A\overset{a}{\to}B\overset{b}{\to}Z$ is a short exact
sequence, then by Proposition \ref{prop:ASS Lem IV.1.7(a) or AR IV Prop 2.7 (a) for quasi-abelian category - criterion for irreducible if f is a kernel}
either there exists $v_{1}\colon B\to Y$ with $gv_{1}=b$ or there
exists $v_{2}\colon Y\to B$ with $g=bv_{2}$. This will yield one
of the two morphisms of short exact sequences indicated in the following
diagram.$$\begin{tikzcd}[row sep=1.2cm, column sep=2cm]A \arrow{r}{a}\arrow[dotted, bend right]{d}[swap]{\exists ! i_1}\arrow[mysymbol]{dr}[description]{\text{I}}& B \arrow{r}{b}\arrow[bend right]{d}[swap]{v_1}& Z\arrow[equals]{d}{1_Z} \\
X\arrow{r}{f}\arrow[dotted, bend right]{u}[swap]{\exists ! i_2} & Y\arrow{r}{g} \arrow[bend right]{u}[swap]{v_2}& Z
\end{tikzcd}$$Therefore, we need only show that the left square \rom{1} is a pushout
in either case. However, this follows immediately from the dual of
\cite[Prop. 5]{Rump-almost-abelian-cats} since $a,f$
are kernels and $1_{Z}$ is an isomorphism.
\end{proof}
\begin{prop}
\label{prop:AR IV Cor 2.9 (b) for quasi-abelian category}Let $\CA$
be a quasi-abelian category and suppose $\xi\colon X\overset{f}{\to}Y\overset{g}{\to}Z$
is a non-split short exact sequence in $\CA$. Suppose $g$ is irreducible.
Then the following statements hold.\emph{\begin{enumerate}[(i)]
\item \emph{For any non-zero subobject} $X' \overset{\iota}{\into} X$\emph{, the induced morphism} $r\colon Y/X'=\Cok(f\iota) \to Z$ \emph{is a retraction.}
\item \emph{For any short exact sequence} $\xi '\colon X\overset{b}{\to} B\overset{c}{\to} C$\emph{, either there exists} $j\colon Z\to C$ \emph{such that} $\xi 'j=\xi$ \emph{or there exists} $i\colon C\to Z$ \emph{such that} $i\xi=\xi '$\emph{.}
\end{enumerate}}
\end{prop}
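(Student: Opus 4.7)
\emph{Proof sketch.} My plan is to dualise the argument for Proposition \ref{prop:AR IV Cor 2.9 (a) for quasi-abelian category}. For (i) I will use the irreducibility of $g$ together with Lemma \ref{lem:epic kernel or monic cokernel is an isomorphism}, and for (ii) I will combine the irreducibility criterion in Proposition \ref{prop:ASS Lem IV.1.7(b) or AR IV Prop 2.7 (b) for quasi-abelian category - criterion for irreducible if f is a cokernel} with \cite[Prop. 5]{Rump-almost-abelian-cats}.

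For (i), I would set $p\deff\cok(f\iota)\colon Y\to Y/X'$. Since $g(f\iota)=(gf)\iota=0$, the universal property of the cokernel produces a unique $r\colon Y/X'\to Z$ with $rp=g$. The factorisation $g=rp$ together with the irreducibility of $g$ forces either $p$ to be a section or $r$ to be a retraction. The crucial step is to rule out the first possibility: if $p$ were a section then it would be monic, but $p$ is already a cokernel by construction, so by Lemma \ref{lem:epic kernel or monic cokernel is an isomorphism} it would be an isomorphism. Composing $p(f\iota)=0$ on the left with $p^{-1}$ then forces $f\iota=0$; since $f$ is a kernel and hence monic, this gives $\iota=0$, contradicting the non-zeroness of $X'$ (in an additive category a zero monomorphism must have zero domain). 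Therefore $r$ is a retraction.

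For (ii), I would feed the morphism $u\deff b\colon X\to B$ into Proposition \ref{prop:ASS Lem IV.1.7(b) or AR IV Prop 2.7 (b) for quasi-abelian category - criterion for irreducible if f is a cokernel} applied to the irreducible morphism $g$ (whose kernel is $f$). This yields two cases: either there exists $u_1\colon Y\to B$ with $u_1 f=b$, or there exists $u_2\colon B\to Y$ with $u_2 b=f$. In the first case, $cu_1 f=cb=0$ together with $g=\cok f$ produces a unique $j\colon Z\to C$ satisfying $jg=cu_1$, and this assembles into a morphism $(1_X,u_1,j)\colon\xi\to\xi'$ of short exact sequences. Dually, in the second case, $gu_2 b=gf=0$ together with $c=\cok b$ produces a unique $i\colon C\to Z$ satisfying $ic=gu_2$, assembling into $(1_X,u_2,i)\colon\xi'\to\xi$.

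In each case the left column of the resulting morphism is the identity $1_X$, so by \cite[Prop. 5]{Rump-almost-abelian-cats} the right square is exact, and in particular a pullback. Unwinding the definition of the pullback of a short exact sequence along its right endpoint, this translates to $\xi'j=\xi$ in the first case and $\xi i=\xi'$ in the second, finishing the proof. The main obstacle I anticipate is purely notational: one must keep straight the direction of each induced morphism of short exact sequences and match it to the correct pullback interpretation, as the arrow conventions can easily become inverted. The underlying mathematics, however, is entirely parallel to the proof of Proposition \ref{prop:AR IV Cor 2.9 (a) for quasi-abelian category}.
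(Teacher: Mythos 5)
Your proposal is correct and takes essentially the same route as the paper: the paper states Proposition \ref{prop:AR IV Cor 2.9 (b) for quasi-abelian category} without proof as the dual of Proposition \ref{prop:AR IV Cor 2.9 (a) for quasi-abelian category}, and your argument is exactly that dualisation carried out correctly (using Lemma \ref{lem:epic kernel or monic cokernel is an isomorphism} to rule out $p=\cok(f\iota)$ being a section in (i), and Proposition \ref{prop:ASS Lem IV.1.7(b) or AR IV Prop 2.7 (b) for quasi-abelian category - criterion for irreducible if f is a cokernel} together with \cite[Prop. 5]{Rump-almost-abelian-cats} to produce the pullback squares in (ii)).
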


\section{\label{sec:Auslander-Reiten-Theory-in-Krull-Schmidt-Categories}Auslander-Reiten
theory in Krull-Schmidt categories}

Let $\CA$ denote a $\ring$-category, for some commutative ring $\ring$,
and let $\CI$ denote a (two-sided) ideal of $\CA$. For a morphism
$f\colon X\to Y$ in $\CA$, we will denote by $\overline{f}$ the
morphism $f+\CI(X,Y)$ in the additive quotient $\ring$-category $\CA/\CI$.
For most of this section we will study Krull-Schmidt categories that are not necessarily $\Hom$-finite. There are very interesting examples of $\Hom$-infinite generalised cluster categories (see \cite{Amiot-cluster-categories-for-algebras-of-global-dimension-2-and-quivers-with-potential}, \cite{Plamondon-cluster-algebras-cluster-categories-with-infinite-dimensional-morphism-spaces}, \cite{KellerYang-derived-equivalences-from-mutations-of-quivers-with-potential}) coming from quivers with potential. In these examples, a certain Krull-Schmidt category has been used in \cite{Plamondon-cluster-characters-for-cluster-categories-with-infinite-dimensional-morphism-spaces} to show the existence of cluster characters for $\Hom$-infinite cluster categories. We provide an example of this kind at the end of this section (see Example \ref{exa:Markov-quiver}).

Before we begin our study of Auslander-Reiten theory in Krull-Schmidt categories, we present a series of lemmas
inspired by \cite[Lem. 1.1]{AuslanderReiten-Rep-theory-of-Artin-algebras-V}.
The proofs are omitted since they are easy generalisations of those
in \cite{AuslanderReiten-Rep-theory-of-Artin-algebras-V}.
\begin{lem}
\label{lem:f in ideal I and 1_X,1_Y notin I then f not isomorphism}Suppose
$X,Y\in\CA$ and $f\in\CI(X,Y)$. If $1_{X}\notin\CI(X,X)$ or $1_{Y}\notin\CI(Y,Y)$,
then $f\colon X\to Y$ is not an isomorphism.
\end{lem}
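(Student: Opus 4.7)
The plan is to argue by contradiction: assume that $f\colon X\to Y$ is an isomorphism, and derive that both identities $1_X$ and $1_Y$ must lie in $\CI$, which contradicts the hypothesis that at least one of them does not.

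More precisely, suppose for contradiction that $f$ is an isomorphism, so there exists $g\colon Y\to X$ with $gf=1_X$ and $fg=1_Y$. Since $\CI$ is a two-sided ideal of $\CA$ and $f\in\CI(X,Y)$, the morphism $gf$ lies in $\CI(X,X)$ and the morphism $fg$ lies in $\CI(Y,Y)$. Hence $1_X=gf\in\CI(X,X)$ and $1_Y=fg\in\CI(Y,Y)$, which contradicts the assumption that $1_X\notin\CI(X,X)$ or $1_Y\notin\CI(Y,Y)$. Therefore $f$ cannot be an isomorphism.

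There is essentially no obstacle here; the only ingredient is that a two-sided ideal of a $\ring$-category is closed under pre- and post-composition by arbitrary morphisms, which is immediate from the definition of an ideal. The lemma is really a packaging statement that will be convenient in the sequel when one wants to deduce non-invertibility of a morphism in $\CA$ from the fact that it factors through the ideal $\CI$.
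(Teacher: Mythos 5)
Your proof is correct and is exactly the standard argument: if $f$ had a two-sided inverse $g$, then the ideal property would force $1_X=gf\in\CI(X,X)$ and $1_Y=fg\in\CI(Y,Y)$, contradicting the hypothesis. The paper omits the proof of this lemma (citing it as an easy generalisation of a result of Auslander--Reiten), and your argument is precisely the intended one.
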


\begin{lem}
\label{lem:1_Xi notin in ideal I, then endos of X factoring through I are radical}Suppose
$X=\bigoplus_{i=1}^{n}X_{i}$ in $\CA$, with $\End_{\CA}X_{i}$ local
and $1_{X_{i}}\notin\CI(X_{i},X_{i})$ for each $i=1,\ldots,n$. For
an endomorphism $f\colon X\to X$, if $f\in\CI(X,X)$ then $f\in\rad_{\CA}(X,X)$.
\end{lem}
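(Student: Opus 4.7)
The plan is to reduce the problem to statements about the ``matrix entries'' of $f$ with respect to the decomposition $X=\bigoplus_{i=1}^{n}X_{i}$. Let $\iota_{i}\colon X_{i}\to X$ and $\pi_{i}\colon X\to X_{i}$ be the canonical inclusion and projection morphisms of the biproduct, and define $f_{ij}\coloneqq\pi_{i}f\iota_{j}\colon X_{j}\to X_{i}$, so that $f=\sum_{i,j}\iota_{i}f_{ij}\pi_{j}$. Because $\CI$ is a two-sided ideal of $\CA$ and $f\in\CI(X,X)$, each component $f_{ij}$ lies in $\CI(X_{j},X_{i})$. The goal is then to show that every $f_{ij}$ belongs to $\rad_{\CA}(X_{j},X_{i})$; the conclusion follows immediately since $\rad_{\CA}$ is also a two-sided ideal, so each $\iota_{i}f_{ij}\pi_{j}$, and hence their finite sum $f$, lies in $\rad_{\CA}(X,X)$.

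The main step, which I expect to be the key obstacle, is upgrading the \emph{non-invertibility} of each $f_{ij}$ to \emph{radicalness}. By the hypothesis that $1_{X_{i}},1_{X_{j}}\notin\CI$ for all $i,j$, Lemma \ref{lem:f in ideal I and 1_X,1_Y notin I then f not isomorphism} gives at once that each $f_{ij}$ fails to be an isomorphism. Now fix $i,j$ and an arbitrary $g\colon X_{i}\to X_{j}$; one must verify that $1_{X_{j}}-gf_{ij}$ is invertible in $\End_{\CA}X_{j}$. Because $\End_{\CA}X_{j}$ is local, this is equivalent to showing $gf_{ij}\in J(\End_{\CA}X_{j})$, which (again by locality) amounts to showing that $gf_{ij}$ is not an automorphism of $X_{j}$. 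If it were, then $f_{ij}$ would be a section; but $X_{j}$ is indecomposable (locality of $\End_{\CA}X_{j}$ rules out any non-trivial idempotent) and $X_{i}\neq 0$ (also since $\End_{\CA}X_{i}$ is local), so Lemma \ref{lem:add cat with split idems, X indecomp and Y neq 0, then f X to Y retraction implies f isomorphism - and dual statement}(ii) would force $f_{ij}$ to be an isomorphism, contradicting the previous sentence.

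Hence every $f_{ij}\in\rad_{\CA}(X_{j},X_{i})$, and the desired conclusion $f\in\rad_{\CA}(X,X)$ follows formally from the ideal property of $\rad_{\CA}$. The real content of the argument is concentrated in the passage ``non-isomorphism of a matrix entry $\Rightarrow$ radical matrix entry'', for which the local endomorphism ring hypothesis on each $X_{i}$ is essential; the rest is routine manipulation of the biproduct decomposition.
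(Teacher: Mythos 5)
The paper omits the proof of this lemma (it is grouped among the ``easy generalisations'' of a lemma of Auslander--Reiten), so there is no argument to compare against line by line; your proof is the natural one and is essentially correct: decompose $f$ into components $f_{ij}=\pi_{i}f\iota_{j}\in\CI(X_{j},X_{i})$, show each is radical using locality of $\End_{\CA}X_{j}$, and reassemble via the ideal property of $\rad_{\CA}$. Two small points. First, your citation of Lemma \ref{lem:add cat with split idems, X indecomp and Y neq 0, then f X to Y retraction implies f isomorphism - and dual statement}(ii) has the hypotheses the wrong way round: for a section $f_{ij}\colon X_{j}\to X_{i}$ that lemma requires the \emph{codomain} $X_{i}$ to be indecomposable and the \emph{domain} $X_{j}$ to be nonzero. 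This is harmless here, since every $X_{k}$ has local endomorphism ring and is therefore both indecomposable and nonzero. Second, and more substantively, that lemma presupposes split idempotents, which is not among the standing hypotheses at this point of the paper ($\CA$ is only a $\ring$-category with an ideal $\CI$; the Krull--Schmidt assumption is imposed only later in the section). You can bypass the issue entirely: for any $g\colon X_{i}\to X_{j}$ the composite $gf_{ij}$ lies in $\CI(X_{j},X_{j})$ because $\CI$ is a two-sided ideal, so Lemma \ref{lem:f in ideal I and 1_X,1_Y notin I then f not isomorphism} (applied with source and target both $X_{j}$) shows $gf_{ij}$ is not an automorphism; by locality of $\End_{\CA}X_{j}$ it therefore lies in $J(\End_{\CA}X_{j})$, whence $1_{X_{j}}-gf_{ij}$ is invertible. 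This is shorter and needs no idempotent splitting. (Minor wording point: invertibility of $1_{X_{j}}-gf_{ij}$ is not \emph{equivalent} to $gf_{ij}\in J(\End_{\CA}X_{j})$, only implied by it --- but that is the direction you actually use.)
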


\begin{lem}
\label{lem:X cap I is zero then f X to Y in A is sect iff f mod I is sect, and Y cap I is zero then f is ret iff f mod I ret}Suppose
$X=\bigoplus_{i=1}^{n}X_{i}$ and $Y=\bigoplus_{j=1}^{m}Y_{j}$ in
$\CA$, with $\End_{\CA}X_{i}$ and $\End_{\CA}Y_{j}$ local for all
$i,j$. Let $f\colon X\to Y$ be a morphism in $\CA$.\emph{\begin{enumerate}[(i)]
\item \emph{If} $1_{X_{i}}\notin \CI(X_i,X_i) \;\forall 1\leq i\leq n$\emph{, then} $f$ \emph{is a section in} $\CA \iff \overline{f}$ \emph{is a section in} $\CA/\CI$\emph{.}
\item \emph{If} $1_{Y_{j}}\notin \CI(Y_j,Y_j) \;\forall 1\leq j\leq m$\emph{, then} $f$ \emph{is a retraction in} $\CA \iff \overline{f}$ \emph{is a retraction in} $\CA/\CI$\emph{.}
\item \emph{If} $1_{X_{i}}\notin \CI(X_i,X_i)$ \emph{and} $1_{Y_{j}}\notin \CI(Y_j,Y_j)$ \emph{for all} $i,j$\emph{, then} $f$ \emph{is an isomorphism in} $\CA \iff \overline{f}$ \emph{is an isomorphism in} $\CA/\CI$\emph{.}
\end{enumerate}}
\end{lem}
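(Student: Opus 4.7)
The plan is to prove (i), then (ii) by a dual argument, and then derive (iii) from the first two parts.

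For (i), the forward direction is immediate and requires no hypothesis: if $gf = 1_X$ in $\CA$ for some $g\colon Y\to X$, then applying the additive quotient functor gives $\overline{g}\,\overline{f} = \overline{1_X} = 1_X$ in $\CA/\CI$, so $\overline{f}$ is a section. The converse is where the hypothesis $1_{X_i}\notin\CI(X_i,X_i)$ is used. Suppose $\overline{f}$ is a section in $\CA/\CI$, so there is $g\colon Y\to X$ with $\overline{g}\,\overline{f} = \overline{1_X}$. This means $gf - 1_X\in \CI(X,X)$. Setting $h\deff gf - 1_X$, our hypothesis on the $1_{X_i}$ lets us invoke Lemma \ref{lem:1_Xi notin in ideal I, then endos of X factoring through I are radical} to conclude $h\in \rad_\CA(X,X) = J(\End_\CA X)$. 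Since elements of the Jacobson radical satisfy that $1_X + h$ is a unit of $\End_\CA X$ (taking $g' = -1_X$ in the defining property of $\rad_\CA$), we see that $gf = 1_X + h$ is invertible in $\End_\CA X$. Writing $u\deff (gf)^{-1}\in\End_\CA X$, the morphism $ug\colon Y\to X$ satisfies $(ug)f = 1_X$, so $f$ is a section in $\CA$.

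Part (ii) is obtained by dualising the argument for (i), using the assumption $1_{Y_j}\notin\CI(Y_j,Y_j)$ in place of the assumption on the $X_i$. In this case, from $\overline{f}\,\overline{h} = \overline{1_Y}$ we get $fh - 1_Y\in\CI(Y,Y)$, which lies in $\rad_\CA(Y,Y) = J(\End_\CA Y)$ by Lemma \ref{lem:1_Xi notin in ideal I, then endos of X factoring through I are radical} applied to $Y = \bigoplus_{j=1}^m Y_j$; hence $fh$ is a unit in $\End_\CA Y$, and $f(h(fh)^{-1}) = 1_Y$ shows $f$ is a retraction in $\CA$.

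For (iii), note that in any category a morphism is an isomorphism if and only if it is both a section and a retraction (if $gf = 1_X$ and $fh = 1_Y$, then $g = g(fh) = (gf)h = h$, so $f$ is two-sided invertible). Thus the forward implication is trivial by functoriality of the quotient. Conversely, if $\overline{f}$ is an isomorphism in $\CA/\CI$, then $\overline{f}$ is simultaneously a section and a retraction there, so parts (i) and (ii) give that $f$ is both a section and a retraction in $\CA$, hence an isomorphism.

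The only step that requires genuine care is the backward direction of (i) (and dually of (ii)), where the transition from "$gf - 1_X$ lies in $\CI$" to "$gf - 1_X$ is radical, and hence $gf$ is invertible" rests crucially on the preceding lemma; the rest of the argument is formal.
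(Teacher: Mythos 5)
Your proof is correct and follows exactly the route the paper intends: the backward directions reduce, via Lemma \ref{lem:1_Xi notin in ideal I, then endos of X factoring through I are radical}, to the fact that $gf-1_X$ (respectively $fh-1_Y$) is radical, hence $gf$ (respectively $fh$) is a unit, and (iii) follows formally from (i) and (ii). The paper omits this proof as an easy generalisation of the corresponding lemma of Auslander--Reiten, and your argument supplies precisely the expected details.
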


The forward direction of the next lemma can be found in \cite{LiuS-AR-theory-in-KS-cat}
just above \cite[Def. 1.6]{LiuS-AR-theory-in-KS-cat},
but it is a short argument so we include it here for completeness.
\begin{lem}
\label{lem:Let I be an ideal of additive A, X cap I is 0 implies EndX in A local iff EndX in A mod I local}Suppose
$X=\bigoplus_{i=1}^{n}X_{i}$ in $\CA$, with $\End_{\CA}X_{i}$ local
and $1_{X_{i}}\notin\CI(X_{i},X_{i})$ for each $i=1,\ldots,n$. Then
$\End_{\CA}X$ is local if and only if $\End_{\CA/\CI}X$ is local.
\end{lem}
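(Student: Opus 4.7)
The plan is to recognise that $\End_{\CA/\CI}X$ is the quotient ring $\End_{\CA}X\,/\,\CI(X,X)$ and then transfer locality across this quotient by using the preceding lemma on isomorphisms (Lemma \ref{lem:X cap I is zero then f X to Y in A is sect iff f mod I is sect, and Y cap I is zero then f is ret iff f mod I ret}(iii)).

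First I would verify that $\CI(X,X)$ is a \emph{proper} two-sided ideal of $\End_{\CA}X$. Writing $X=\bigoplus_{i=1}^{n}X_{i}$ with canonical injections $\iota_{i}\colon X_{i}\to X$ and projections $\pi_{i}\colon X\to X_{i}$, one has $1_{X_{i}}=\pi_{i}\circ 1_{X}\circ\iota_{i}$. If $1_{X}$ were in $\CI(X,X)$, then (since $\CI$ is an ideal of $\CA$) $1_{X_{i}}$ would lie in $\CI(X_{i},X_{i})$, contradicting the hypothesis. In particular $\End_{\CA}X\neq 0$ and the canonical surjection $\pi\colon\End_{\CA}X\to\End_{\CA/\CI}X$ has proper kernel $\CI(X,X)$.

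For the forward direction, if $\End_{\CA}X$ is local then, being a proper two-sided ideal, $\CI(X,X)$ is contained in the unique maximal ideal of $\End_{\CA}X$. The quotient of a local ring by any proper ideal is again local, so $\End_{\CA/\CI}X$ is local.

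For the backward direction I would use the standard characterisation: a (non-zero) ring $R$ is local if and only if, for every $r\in R$, at least one of $r$ or $1_{R}-r$ is a unit. Let $f\in\End_{\CA}X$. By the assumed locality of $\End_{\CA/\CI}X$, either $\overline{f}$ or $\overline{1_{X}-f}=1_{X}-\overline{f}$ is a unit in $\End_{\CA/\CI}X$, i.e.\ is an isomorphism in $\CA/\CI$. Applying Lemma \ref{lem:X cap I is zero then f X to Y in A is sect iff f mod I is sect, and Y cap I is zero then f is ret iff f mod I ret}(iii) (whose hypotheses are exactly the ones we have for $X$) transports this back to $\CA$: the corresponding element $f$ or $1_{X}-f$ is an isomorphism in $\CA$, hence a unit in $\End_{\CA}X$. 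This gives the local criterion for $\End_{\CA}X$. There is no real obstacle here; the only thing to be careful about is ensuring that Lemma \ref{lem:X cap I is zero then f X to Y in A is sect iff f mod I is sect, and Y cap I is zero then f is ret iff f mod I ret}(iii) applies to endomorphisms of $X$ (it does, since the same decomposition $X=\bigoplus X_{i}$ serves both as source and target), and noting that $\End_{\CA}X\neq 0$ so the usual local-ring characterisations are available.
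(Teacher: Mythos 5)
Your proof is correct, and while the forward direction coincides with the paper's (namely: $1_{X}\notin\CI(X,X)$ because $1_{X_{i}}=\pi_{i}1_{X}\iota_{i}$, so $\CI(X,X)$ is a proper two-sided ideal, hence sits inside the unique maximal ideal of the local ring $\End_{\CA}X$, and the quotient of a local ring by such an ideal is local), your backward direction takes a genuinely different route. The paper fixes a non-unit $u\in\End_{\CA}X$ and argues by hand: it first rules out $\overline{u}$ being a unit by lifting a hypothetical inverse via Lemma \ref{lem:1_Xi notin in ideal I, then endos of X factoring through I are radical} (endomorphisms of $X$ lying in $\CI(X,X)$ are radical), then uses locality of $\End_{\CA/\CI}X$ to make $\overline{1_{X}}-\overline{u}$ a unit, and finally lifts a left and a right inverse of $1_{X}-u$ separately by the same radical argument. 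You instead combine the standard (noncommutative) criterion that a nonzero ring $R$ is local if and only if for every $r$ either $r$ or $1_{R}-r$ is a unit with Lemma \ref{lem:X cap I is zero then f X to Y in A is sect iff f mod I is sect, and Y cap I is zero then f is ret iff f mod I ret}\,(iii), whose hypotheses are indeed met by taking $Y=X$ with the same decomposition. This is shorter and sidesteps the one-sided-inverse bookkeeping entirely, since part (iii) transfers genuine isomorphisms; the only thing you lean on that the paper does not is that local-ring criterion, which deserves a citation (it is one of the equivalent conditions in the usual characterisation of local rings) but is not a gap. Both arguments ultimately rest on the same lifting phenomenon, as part (iii) of that lemma is itself established by radical-type arguments in the style of the paper's proof.
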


\begin{proof}
$(\Rightarrow)$ If $\End_{\CA}X$ is local, then $\CI(X,X)$ is contained
in the Jacobson radical $J(\End_{\CA}X)$, which is the unique maximal ideal of $\End_{\CA}X$, since $1_{X}\notin\CI(X,X)$. Then $\End_{\CA/\CI}X$
has unique maximal ideal $J(\End_{\CA}X)/\CI(X,X)$.

$(\Leftarrow)$ Conversely, assume $\End_{\CA/\CI}X$ is local and
let $u\colon X\to X$ be a non-unit in $\End_{\CA}X$. We will show
that $1_{X}-u$ is a unit in $\End_{\CA}X$. If $\overline{u}$ is
a unit in $\End_{\CA/\CI}X$ then $\overline{1_{X}}=\overline{u}\overline{v}$
for some $v\colon X\to X$. Then $1_{X}-uv\in\CI(X,X)$, so $1_{X}-uv$
is radical by Lemma \ref{lem:1_Xi notin in ideal I, then endos of X factoring through I are radical}.
Then $uv=1_{X}-(1_{X}-uv)$ is a unit, so that $u$ has a right inverse.
A similar argument also shows $u$ has a left inverse and so $u$
is a unit, contrary to our assumption on $u$. Thus, $\overline{u}$
is not invertible and, as $\End_{\CA/\CI}X$ is local, $\overline{u}$
must be radical. Therefore, $\overline{1_{X}}-\overline{u}$ is a
unit in $\End_{\CA/\CI}X$. So for some $w\colon X\to X$ we have
$\overline{w}(\overline{1_{X}}-\overline{u})=\overline{1_{X}}$. This
shows that $1_{X}-w(1_{X}-u)\in\CI(X,X)$ must be radical using Lemma
\ref{lem:1_Xi notin in ideal I, then endos of X factoring through I are radical}
as before, so $w(1_{X}-u)=1_{X}-(1_{X}-w(1_{X}-u))$ is invertible
and $1_{X}-u$ has a left inverse. Again, a similar argument shows
$1_{X}-u$ has a right inverse, and hence a two-sided inverse.
\end{proof}

\begin{defn}
\label{def:weak-co-kernel}Suppose $f\colon X\to Y$ is a morphism
in an additive category $\CA$. A \emph{weak kernel} of $f$ is a morphism
$w\colon W\to X$ such that $f\circ w=0$,
with the following property: for every morphism $g\colon V\to X$
with $fg=0$, there exists $\widehat{g}\colon V\to W$ such
that $w\widehat{g}=g$. A \emph{weak cokernel} is defined
dually. We call a sequence $X\overset{f}{\to}Y\overset{g}{\to}Z$
\emph{short weak exact} if $f$ is a weak kernel of $g$ and $g$
is a weak cokernel of $f$.
\end{defn}

It is easy to show that a morphism is a pseudo-(co)kernel, in the
sense of \cite{LiuS-AR-theory-in-KS-cat}, if and only
if it is a weak (co)kernel. We adopt the terminology `weak' as it
seems more widely used.
\begin{defn}
\label{def:AR-sequence in an additive category}We call a sequence
$X\overset{f}{\to}Y\overset{g}{\to}Z$ in $\CA$ an \emph{Auslander-Reiten
sequence (in an additive category)} if the following conditions are
satisfied. \begin{enumerate}[(i)]
\item The sequence is short weak exact.
\item The morphism $f$ is minimal left almost split.
\item The morphism $g$ is minimal right almost split.
\end{enumerate}
\end{defn}

In an almost identical way, Liu defines an Auslander-Reiten sequence
for a $\Hom$-finite, Krull-Schmidt category in \cite{LiuS-AR-theory-in-KS-cat}.
However, we do not impose the condition that the middle term be non-zero,
because Auslander-Reiten sequences of the form $X\to0\to Z$ do appear,
for example, in the bounded derived category $\der^{b}(\lmod{k\BA_{1}})$
of the path algebra $k\BA_{1}$, where $k$ is an algebraically closed
field, and $\BA_{1}$ is the quiver with one vertex and no arrows.
As we will see now, the results of \cite[\S 1]{LiuS-AR-theory-in-KS-cat}
can be generalised to the not necessarily $\Hom$-finite setting.
First, we note that \cite[Lem. 1.1]{LiuS-AR-theory-in-KS-cat}
is still valid for an arbitrary additive category.

Next, we give a more general version of \cite[Prop. 1.5]{LiuS-AR-theory-in-KS-cat}:
\begin{prop}
\label{prop:A preabelian, AR sequence (weak exact) is in fact exact}Suppose
$\CA$ is a preabelian category, and let $\xi\colon X\overset{f}{\to}Y\overset{g}{\to}Z$
be an Auslander-Reiten sequence in $\CA$ with $Y\neq0$. Then $\xi$
is short exact.
\end{prop}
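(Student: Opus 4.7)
The plan is to prove short exactness of $\xi$ by establishing $f=\ker g$ and $g=\cok f$ separately. Two standard inputs will drive everything. First, by \cite[Lem.~2.3]{AuslanderReiten-Rep-theory-of-Artin-algebras-IV}, the source $X$ of the left almost split morphism $f$ and the target $Z$ of the right almost split morphism $g$ both inherit local endomorphism rings, and in particular are non-zero. Second, since a preabelian category has split idempotents (Remark~\ref{rem:preabelian cats have split idempotents}), Proposition~\ref{prop:left (right) minimal almost split with nonzero target (domain) is irreducible} ensures that both $f$ and $g$ are irreducible (the hypotheses $Y\neq 0$ and $X\neq 0$ being satisfied).

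To show $f=\ker g$, I would let $k=\ker g$, obtain $j\colon X\to K$ with $kj=f$ from the universal property of $k$ (using $gf=0$), and obtain $\hat k\colon K\to X$ with $f\hat k=k$ from the weak kernel property of $f$ applied to $k$. Cancelling the mono $k$ in $kj\hat k=f\hat k=k\cdot 1_K$ gives $j\hat k=1_K$. The key step is then to apply irreducibility of $f$ to the factorisation $f=kj$: either $k$ is a retraction or $j$ is a section. If $k$ were a retraction, being also monic it would be an isomorphism, forcing $g=(gk)k^{-1}=0$; I would rule this out by factoring $g$ through the zero object as $g=(0\colon 0\to Z)\circ(0\colon Y\to 0)$, which, since $Y,Z\neq 0$, exhibits a factorisation through a non-section followed by a non-retraction and contradicts irreducibility of $g$. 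Hence $j$ is a section, and together with $j\hat k=1_K$ (so $j$ is also a retraction) $j$ is an isomorphism, whence $f$ is a kernel of $g$.

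Dually, for $g=\cok f$, I would let $c=\cok f$, obtain $\bar g\colon C\to Z$ from the universal property of $c$, and $\bar c\colon Z\to C$ from the weak cokernel property of $g$ applied to $c$ (using $cf=0$). Cancelling the epi $c$ in $\bar c\bar g c=\bar c g=c$ gives $\bar c\bar g=1_C$. Irreducibility of $g=\bar g c$ forces $\bar g$ to be a retraction or $c$ to be a section; the latter would make the epi $c$ an isomorphism, whence $cf=0$ gives $f=0$, ruled out by the same zero-object factorisation trick applied to $f$ using that $X,Y\neq 0$. Hence $\bar g$ is a retraction and, combined with $\bar c\bar g=1_C$ (so $\bar g$ is also a section), is an isomorphism, so $g$ is a cokernel of $f$. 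The main obstacle is this mirrored pair of \emph{ruling-out} steps: it is irreducibility of \emph{both} $f$ and $g$ (not merely minimality) together with all three terms being non-zero that kills the degenerate alternatives $g=0$ and $f=0$; once they are ruled out, the rest is routine universal-property chasing together with the elementary fact that a monic retraction, respectively an epic section, is an isomorphism.
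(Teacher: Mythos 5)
Your proof is correct and is essentially the argument the paper intends: the paper omits the proof, presenting the result as an easy generalisation of Liu's Proposition 1.5, and that standard argument is exactly yours --- compare $f$ with $\ker g$ via the universal and weak-kernel properties to get $j\hat{k}=1_K$, use irreducibility of $f$ (available from minimal left almost split plus $Y\neq 0$ and split idempotents) to force $j$ to be a section and hence an isomorphism, and dualise for the cokernel. Your ruling-out steps ($g\neq 0$ and $f\neq 0$ via factoring a zero morphism through the zero object between non-zero objects, with $X,Z\neq 0$ coming from locality of their endomorphism rings) are also correct.
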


Recall that an additive category $\CA$ is \emph{Krull-Schmidt} if, for any object $X$ of $\CA$, there exists a finite direct
sum decomposition $X=X_{1}\oplus\cdots\oplus X_{n}$ where $\End_{\CA}(X_{i})$
is a local ring for $i=1,\ldots,n$ (see \cite[p. 544]{Krause-KS-cats-and-projective-covers}). 

Throughout the remainder of this section, we further assume that $\CA$
is a Krull-Schmidt category unless otherwise stated. In particular,
$\CA$ has split idempotents (see Remark \ref{rem:preabelian cats have split idempotents}), and an object $X\in\CA$ is indecomposable if and only if $\End_{\CA}X$
is local. We still assume $\CI$ is an ideal of $\CA$.

The following lemma is a generalisation of \cite[Lem. 1.2]{LiuS-AR-theory-in-KS-cat}
that will be needed to prove a uniqueness result about Auslander-Reiten
sequences in a Krull-Schmidt category (see Theorem \ref{thm:Liu thm 1.4}).
Part of the proof in \cite{LiuS-AR-theory-in-KS-cat}
uses heavily that the category is $\Hom$-finite, so the corresponding
part of the proof below is quite different in nature.
\begin{lem}
\label{lem:Liu lem 1.2}Suppose $$\begin{tikzcd} Y \arrow{r}{f}\arrow{d}[swap]{u}& Z\arrow{d}{v} \\ Y\arrow{r}{g} & Z \end{tikzcd}$$is
a commutative diagram in $\CA$, with $f,g$ both non-zero.\emph{\begin{enumerate}[(i)]
\item \emph{If} $f,g$ \emph{are minimal right almost split, then} $u\in\Aut_{\CA}Y$ $\iff$ $v\in\Aut_{\CA}Z$\emph{.}
\item \emph{If} $f,g$ \emph{are minimal left almost split, then} $u\in\Aut_{\CA}Y$ $\iff$ $v\in\Aut_{\CA}Z$\emph{.}
\end{enumerate}}
\end{lem}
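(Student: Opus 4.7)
I plan to prove (i) directly and deduce (ii) by duality in $\CA^{\op}$, which exchanges left and right almost split morphisms and transposes the square. The main inputs will be: the lifting property of right almost split morphisms, right minimality, the fact that $Z$ is indecomposable with local endomorphism ring (and non-zero since $f\neq 0$), and the following Krull-Schmidt feature: any retraction $\phi\colon Y\to Y$ is automatically an automorphism, because splitting $\phi$ gives $Y\cong Y\oplus\ker\phi$ and the uniqueness of Krull-Schmidt decompositions forces $\ker\phi\cong 0$.

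\noindent\textbf{Preparation.} Since $f,g$ are right almost split with the same target, neither is a retraction. Applying the lifting property of $f$ to $g$ and vice versa produces $\hat{f},\hat{g}\in\End_\CA Y$ with $g\hat{f}=f$ and $f\hat{g}=g$; substituting yields $f=f\hat{g}\hat{f}$ and $g=g\hat{f}\hat{g}$, so right minimality forces $\hat{f},\hat{g}\in\Aut_\CA Y$.

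\noindent\textbf{Backward direction of (i).} Assume $v\in\Aut_\CA Z$. Using $g=f\hat{g}$, the square $vf=gu$ rewrites as $vf=f(\hat{g}u)$. The morphism $v^{-1}f$ is not a retraction (because $f$ is not), so the right almost split property of $f$ yields $s\in\End_\CA Y$ with $fs=v^{-1}f$. Then $f=vfs=f(\hat{g}u\cdot s)$, and right minimality gives $\hat{g}u\cdot s\in\Aut_\CA Y$. Thus $\hat{g}u$ has a right inverse and so is a retraction $Y\to Y$; the Krull-Schmidt feature above promotes it to an automorphism, and since $\hat{g}\in\Aut_\CA Y$ we conclude $u\in\Aut_\CA Y$.

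\noindent\textbf{Forward direction of (i).} Assume $u\in\Aut_\CA Y$ and, for contradiction, that $v\notin\Aut_\CA Z$. Since $Z$ is indecomposable with local endomorphism ring and $Z\neq 0$, $v$ cannot be a retraction, so the right almost split property of $f$ applied to $v$ gives $s\colon Z\to Y$ with $fs=v$. From $vf=gu$ and $u$ invertible, $g=f\cdot m$ with $m\deff sfu^{-1}$. Using $g\hat{f}=f$, we get $f=f(m\hat{f})$, so right minimality yields $m\hat{f}\in\Aut_\CA Y$; since $\hat{f}\in\Aut_\CA Y$ by the preparation, $m$ is right invertible, and since $m=sfu^{-1}$ this right invertibility transfers immediately to $s$. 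Thus $s\colon Z\to Y$ is a retraction with $Z$ indecomposable and $Y\neq 0$, so Lemma \ref{lem:add cat with split idems, X indecomp and Y neq 0, then f X to Y retraction implies f isomorphism - and dual statement}(i) forces $s$ to be an isomorphism. Consequently $Y\cong Z$ is indecomposable, $\End_\CA Y$ is local, and $m\hat{f}\in\Aut_\CA Y$ together with $\hat{f}\in\Aut_\CA Y$ gives $m\in\Aut_\CA Y$. Finally $f=s^{-1}mu$ is a composition of automorphisms, so $f$ is an isomorphism --- contradicting the fact that $f$ is not a retraction. Part (ii) is obtained by applying (i) to $\CA^{\op}$. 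The main obstacle lies in this forward direction: unlike the backward direction, one cannot simply absorb $\hat{g}$ to reduce to the case $f=g$, and instead the two liftings (of $v$ through $f$ and of $f$ through $g$) must be combined so that right minimality of $f$ forces $s$ itself to be a retraction, after which Krull-Schmidt cancellation plays the role that $\Hom$-finiteness did in Liu's original argument.
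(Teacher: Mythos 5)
Your proof is correct and follows essentially the same route as the paper's: for the nontrivial direction you factor the non-retraction $v$ through a minimal right almost split morphism, use right minimality to force the factoring morphism $s\colon Z\to Y$ to be a retraction, and then indecomposability of $Z$ (via Lemma \ref{lem:add cat with split idems, X indecomp and Y neq 0, then f X to Y retraction implies f isomorphism - and dual statement}) makes $f$ an isomorphism, a contradiction — the paper factors through $g$ where you factor through $f$ and compensate with $\hat{f}$, which is only a cosmetic difference. The only stylistic remark is that in the backward direction the Krull--Schmidt cancellation is avoidable: from $f=f(tu)$ and $g=g(ut)$ (with $t$ a lift of $v^{-1}g$ through $f$) right minimality of $f$ and $g$ already gives $u$ a two-sided inverse.
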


\begin{proof}
We prove only (i) as the proof for (ii) is dual. Assume $f,g$ are
non-zero, minimal right almost split morphisms with $vf=gu$. Note
that the argument in \cite{LiuS-AR-theory-in-KS-cat}
that $u$ is an automorphism of $Y$ whenever $v$ is an automorphism
of $Z$ works here as well, so we only show the converse. We observe
for later use that $Y,Z$ are both non-zero since there exists a non-zero
morphism between them.

Therefore, suppose $u\in\Aut_{\CA}Y$ with inverse $u^{-1}$. Since
$f$ is right almost split, we have that $\End_{\CA}Z$ is local,
so $Z$ is indecomposable. Assume, for contradiction, that $v$ is
not a retraction. Then $v$ factors through the right almost split
morphism $g$ as, say, $v=ga$ for some $a\colon Z\to Y$. In particular,
we see that $g=guu^{-1}=vfu^{-1}=gafu^{-1}$ and hence $afu^{-1}$
is an automorphism of $Y$ as $g$ is right minimal. This means that
$af$ is also an automorphism of $Y$ and that $a$ is a retraction.
Then, by Lemma \ref{lem:add cat with split idems, X indecomp and Y neq 0, then f X to Y retraction implies f isomorphism - and dual statement},
we have that $a$ is an isomorphism because $Z$ is indecomposable
and $Y\neq0$. However, this yields that $f=a^{-1}af$ is an isomorphism,
and hence a retraction, which contradicts that $f$ is right almost
split. Hence, $v$ must be a retraction, and thus also an isomorphism
by Lemma \ref{lem:add cat with split idems, X indecomp and Y neq 0, then f X to Y retraction implies f isomorphism - and dual statement}. \phantom{spacefiller}
\end{proof}
Now we generalise \cite[Thm. 1.4]{LiuS-AR-theory-in-KS-cat}
to a not necessarily $\Hom$-finite (but still Krull-Schmidt) setting.
\begin{thm}
\label{thm:Liu thm 1.4}Let $\CA$ be a Krull-Schmidt category, and
suppose $X\overset{f}{\longrightarrow}Y\overset{g}{\longrightarrow}Z$
is an Auslander-Reiten sequence in $\CA$ with $Y\neq0$.\emph{\begin{enumerate}[(i)]
\item \emph{Up to isomorphism,} $X\overset{f}{\longrightarrow}Y\overset{g}{\longrightarrow}Z$ \emph{is the unique Auslander-Reiten sequence starting at} $X$ \emph{and the unique one ending at} $Z$\emph{.}
\item \emph{Any irreducible morphism} $f_1\colon X \to Y_1$ \emph{or} $g_1\colon Y_1 \to Z$ \emph{fits into an Auslander-Reiten sequence} $\begin{tikzcd}[column sep=1.2cm, ampersand replacement=\&] X \arrow{r}{\begin{psmallmatrix}f_1 \\ f_2\end{psmallmatrix}} \& Y_1 \oplus Y_2 \arrow{r}{\begin{psmallmatrix}g_1 & g_2\end{psmallmatrix}}\& Z.\end{tikzcd}$
\end{enumerate}}
\end{thm}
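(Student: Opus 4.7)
The plan is to prove (i) via the classical Auslander-Reiten uniqueness argument---given a second AR sequence ending at $Z$, produce comparison maps between the middle terms, deduce they are isomorphisms from the minimality, lift them to the left terms using the weak kernel property, and promote the lifts to isomorphisms using Lemma \ref{lem:Liu lem 1.2}. Part (ii) will then follow by factoring the given irreducible morphism through $f$ (or $g$) and splitting off a summand of $Y$ via the Krull-Schmidt property.

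For (i), let $E \colon X \xrightarrow{f} Y \xrightarrow{g} Z$ be the given AR sequence and suppose $E' \colon X' \xrightarrow{f'} Y' \xrightarrow{g'} Z$ is another AR sequence ending at $Z$. Since $g$ is right almost split and $g'$ is not a retraction (being minimal right almost split), I would first produce $a \colon Y' \to Y$ with $g a = g'$ and, symmetrically, $b \colon Y \to Y'$ with $g' b = g$. Then $g(ab) = g$ and $g'(ba) = g'$, so right minimality of $g$ and $g'$ gives $ab \in \Aut_\CA Y$ and $ba \in \Aut_\CA Y'$; in particular $a$ and $b$ are isomorphisms. Next, $g'(bf) = g f = 0$, so the weak kernel property of $f'$ (from short weak exactness of $E'$) yields $h \colon X \to X'$ with $f' h = b f$; dually one obtains $k \colon X' \to X$ with $f k = a f'$. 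Composing the two squares gives $f(kh) = (ab) f$, and applying Lemma \ref{lem:Liu lem 1.2}(ii) to this endomorphism of the minimal left almost split $f$ (with $ab \in \Aut_\CA Y$) forces $kh \in \Aut_\CA X$; similarly $hk \in \Aut_\CA X'$. Hence $h$ is an isomorphism, and $(h, b, 1_Z) \colon E \to E'$ realises the required isomorphism of AR sequences. The uniqueness of the AR sequence starting at $X$ is dual, using weak cokernels and Lemma \ref{lem:Liu lem 1.2}(i).

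For (ii), let $f_1 \colon X \to Y_1$ be irreducible; in particular $f_1$ is not a section. The left almost split property of $f$ then yields $\phi \colon Y \to Y_1$ with $\phi f = f_1$. Irreducibility of $f_1 = \phi f$ together with the fact that $f$ is not a section forces $\phi$ to be a retraction. Since $\CA$ is Krull-Schmidt, it has split idempotents (Remark \ref{rem:preabelian cats have split idempotents}), so there is a decomposition $Y \cong Y_1 \oplus Y_2$ for some $Y_2$ in which $\phi$ corresponds to the canonical projection. Expressing $f$ and $g$ as matrices with respect to this decomposition then rewrites the AR sequence in the desired shape $X \xrightarrow{\begin{psmallmatrix}f_1 \\ f_2\end{psmallmatrix}} Y_1 \oplus Y_2 \xrightarrow{(g_1\;g_2)} Z$. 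The case of an irreducible $g_1 \colon Y_1 \to Z$ is dual: using the right almost split property of $g$ to factor $g_1 = g \sigma$ for some $\sigma \colon Y_1 \to Y$, and then showing $\sigma$ must be a section by irreducibility of $g_1$.

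The main delicate point is the application of Lemma \ref{lem:Liu lem 1.2} in (i), which demands the minimal almost split morphisms in question to be non-zero; this needs to be checked in the setting of the theorem, but under $Y \neq 0$ together with the standing Krull-Schmidt hypothesis the morphism $f$ in an AR sequence should be non-zero (the degenerate case $f = 0$ would force every non-section out of $X$ to vanish, which can be dispatched by a direct argument using that $\End_\CA X$ is local).
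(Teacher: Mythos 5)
Your proposal is correct and follows essentially the same route as the paper, whose proof simply defers to Liu's original argument with \cite[Lem.~1.2]{LiuS-AR-theory-in-KS-cat} replaced by Lemma~\ref{lem:Liu lem 1.2}; your write-up makes that standard argument explicit and correctly isolates the one point (the comparison square $f(kh)=(ab)f$ and its twin) where the generalised lemma is invoked. The only detail worth recording is that $Y\neq 0$ forces $f\neq 0$ and $g\neq 0$ directly from left/right minimality (if $f=0$ then every endomorphism of $Y$, including $0_Y$, would be an automorphism), and $Y'\cong Y$ then gives $f'\neq 0$, so the non-vanishing hypothesis of Lemma~\ref{lem:Liu lem 1.2} is indeed met.
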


\begin{proof}
Follow the proof in \cite{LiuS-AR-theory-in-KS-cat},
replacing the use of \cite[Lem. 1.2]{LiuS-AR-theory-in-KS-cat}
with Lemma \ref{lem:Liu lem 1.2}.
\end{proof}
For a $\Hom$-finite, Krull-Schmidt category, Liu identifies a nice
class of ideals---admissible ideals. It is observed in
\cite{LiuS-AR-theory-in-KS-cat} that, for such an
ideal $\CI$ of a $\Hom$-finite, Krull-Schmidt category $\CA$, irreducible
morphisms (between indecomposables) and minimal left/right almost
split morphisms remain, respectively, so under the quotient functor
$\CA\to\CA/\CI$. We adopt the same definition but without the $\Hom$-finite
restriction.
\begin{defn}
\label{def:admissible ideal of morphisms}\cite[Def. 1.6]{LiuS-AR-theory-in-KS-cat}
Suppose $\CA$ is a Krull-Schmidt $\ring$-category. An ideal $\CI$ of
$\CA$ is called \emph{admissible} if it satisfies the following.\begin{enumerate}[(i)]
\item Whenever $X,Y\in \CA$ are indecomposable such that $1_X\notin\CI(X,X)$ and $1_Y\notin\CI(Y,Y)$, then $\CI(X,Y)\subseteq\rad^{2}_{\CA}(X,Y).$
\item If $f\colon X\to Y$ is minimal left almost split, where $1_X\notin\CI(X,X),$ and $g\in\CI(X,M),$ then we can express $g=hf$ for some $h\in\CI(Y,M).$
\item If $f\colon X\to Y$ is minimal right almost split, where $1_Y\notin\CI(Y,Y),$ and $g\in\CI(M,Y),$ then we can express $g=fh$ for some $h\in\CI(M,X).$
\end{enumerate}
\end{defn}

\begin{example}
\label{exa:morphisms factoring through subcate that is closed under summands is admissible}Suppose $\CB\subseteq\CA$ is a full subcategory closed under direct sums and direct summands.
Then the ideal $[\CB]$ of morphisms factoring through objects of
$\CB$ is admissible. See \cite[Prop. 1.9]{LiuS-AR-theory-in-KS-cat}.
\end{example}

The next result follows quickly from the definition of an admissible
ideal.
\begin{lem}
\label{lem:I admissible ideal, X,Y cap I is zero then I(X,Y) subset of rad^2(X,Y)}Suppose
$\CI$ is an admissible ideal of $\CA$. Suppose $X=\bigoplus_{i=1}^{n}X_{i}$
and $Y=\bigoplus_{j=1}^{m}Y_{j}$ are decompositions into indecomposables
in $\CA$ with $1_{X_{i}}\notin\CI(X_{i},X_{i}),1_{Y_{j}}\notin\CI(Y_{j},Y_{j})$
for all $i,j$. Then $\CI(X,Y)\subseteq\rad_{\CA}^{2}(X,Y)$.
\end{lem}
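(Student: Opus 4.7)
The plan is to reduce to the indecomposable case using the direct sum decompositions of $X$ and $Y$, then apply part (i) of the definition of an admissible ideal (Definition \ref{def:admissible ideal of morphisms}) component-wise, and finally reassemble using the ideal property of the radical.

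First I would fix an arbitrary $f \in \CI(X,Y)$ and let $\iota_i^X \colon X_i \to X$, $\pi_i^X \colon X \to X_i$, $\iota_j^Y \colon Y_j \to Y$, $\pi_j^Y \colon Y \to Y_j$ denote the canonical inclusions and projections of the given decompositions. Setting $f_{ji} \deff \pi_j^Y f \iota_i^X \colon X_i \to Y_j$, the identities $1_X = \sum_i \iota_i^X \pi_i^X$ and $1_Y = \sum_j \iota_j^Y \pi_j^Y$ yield the decomposition
\[
f \;=\; \sum_{i,j} \iota_j^Y\, f_{ji}\, \pi_i^X.
\]
Since $\CI$ is a two-sided ideal of $\CA$ and $f \in \CI(X,Y)$, each component $f_{ji}$ lies in $\CI(X_i, Y_j)$.

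Next, I would apply part (i) of Definition \ref{def:admissible ideal of morphisms}: since $X_i$ and $Y_j$ are indecomposable and $1_{X_i} \notin \CI(X_i,X_i)$, $1_{Y_j} \notin \CI(Y_j,Y_j)$ by hypothesis, admissibility gives $f_{ji} \in \rad_\CA^2(X_i, Y_j)$ for every pair $i,j$. By definition, this means $f_{ji}$ is a $\ring$-linear combination of morphisms of the form $a \circ b$ with $a \in \rad_\CA(-, Y_j)$ and $b \in \rad_\CA(X_i, -)$ radical.

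For the final step, I would use that $\rad_\CA$ is an ideal of $\CA$: composing a radical morphism with any morphism (on either side) produces a radical morphism. Hence for each summand of the form $a \circ b$, the morphism $(\iota_j^Y a)(b\, \pi_i^X)$ is a composition of two radical morphisms, and so belongs to $\rad_\CA^2(X,Y)$. Passing to $\ring$-linear combinations, $\iota_j^Y f_{ji} \pi_i^X \in \rad_\CA^2(X,Y)$ for each $i,j$, and summing over all pairs we conclude $f \in \rad_\CA^2(X,Y)$. There is no real obstacle here: the argument is essentially bookkeeping via the direct sum decompositions, and the only substantive input is condition (i) of the definition of an admissible ideal together with the fact that the radical of a category is a two-sided ideal.
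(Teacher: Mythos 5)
Your proof is correct and follows essentially the same route as the paper's: decompose $f$ into components $f_{ji}=\pi_j f\iota_i\in\CI(X_i,Y_j)$, apply condition (i) of Definition \ref{def:admissible ideal of morphisms} to each component, and reassemble. The only difference is that you spell out the final bookkeeping step (that $\iota_j f_{ji}\pi_i\in\rad_\CA^2(X,Y)$ via the ideal property of the radical), which the paper leaves implicit.
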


\begin{proof}
Let $f\in\CI(X,Y)$ be arbitrary and write$$f=\begin{pmatrix} f_{11} & \cdots & f_{1n} \\ \vdots & \ddots & \vdots \\ f_{m1} & \cdots & f_{mn} \end{pmatrix}$$where
$f_{ji}\colon X_{i}\to Y_{j}$. Then for each $i,j$ we have $f_{ji}=\pi_{j}f\iota_{i}\in\CI(X_{i},Y_{j})$,
where $\pi_{j}\colon Y\to Y_{j}$ is the natural projection and $\iota_{i}\colon X_{i}\to X$
is the natural inclusion. Since $\CI$ is admissible and $1_{X_{i}}\notin\CI(X_{i},X_{i}),1_{Y_{j}}\notin\CI(Y_{j},Y_{j})$,
we have that $f_{ji}\in\CI(X_{i},Y_{j})\subseteq\rad_{\CA}^{2}(X_{i},Y_{j})$
for each $i,j$. Therefore, $f$ is a sum of morphisms in $\rad_{\CA}^{2}(X,Y)$
and hence $f\in\rad_{\CA}^{2}(X,Y)$ as desired.
\end{proof}
The next lemma generalises \cite[Lem. 1.7 (1)]{LiuS-AR-theory-in-KS-cat}.
The proof in \cite{LiuS-AR-theory-in-KS-cat} makes
use of a specific characterisation of irreducible morphisms between
indecomposables (see \cite[Prop. 2.4]{Bautista-irred-morphs-and-radical-of-a-cat}),
which we cannot use since we make no indecomposability assumptions
on the domain and codomain of the morphism. See also \cite[Prop. 1.2]{AuslanderReiten-Rep-theory-of-Artin-algebras-V}.
\begin{prop}
\label{prop:I admissible, X,Y cap I is zero then f irred iff f mod I irred}Suppose
$\CI$ is an admissible ideal of $\CA$. Suppose $X=\bigoplus_{i=1}^{n}X_{i}$
and $Y=\bigoplus_{j=1}^{m}Y_{j}$ are decompositions into indecomposables
in $\CA$ with $1_{X_{i}}\notin\CI(X_{i},X_{i}),1_{Y_{j}}\notin\CI(Y_{j},Y_{j})$
for all $i,j$. Then $f\colon X\to Y$ is irreducible in $\CA$ if
and only if $\overline{f}=f+\CI(X,Y)$ is irreducible in $\CA/\CI$.
\end{prop}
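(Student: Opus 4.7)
The plan is to combine two earlier ingredients: Lemma \ref{lem:X cap I is zero then f X to Y in A is sect iff f mod I is sect, and Y cap I is zero then f is ret iff f mod I ret}, which transports section/retraction status across the quotient $\CA\to\CA/\CI$ under the hypothesis on the $X_i$ and $Y_j$, and Lemma \ref{lem:I admissible ideal, X,Y cap I is zero then I(X,Y) subset of rad^2(X,Y)}, which gives $\CI(X,Y)\subseteq\rad_\CA^2(X,Y)$. The additional fact used throughout is that in a Krull-Schmidt category, $\rad_\CA(X,X)$ sits inside the Jacobson radical of $\End_\CA X$, so any endomorphism of the form $1_X-r$ with $r\in\rad_\CA(X,X)$ is invertible.

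For the forward direction, suppose $f$ is irreducible in $\CA$. Lemma \ref{lem:X cap I is zero then f X to Y in A is sect iff f mod I is sect, and Y cap I is zero then f is ret iff f mod I ret} immediately gives that $\overline{f}$ is neither a section nor a retraction. Given a factorisation $\overline{f}=\overline{h}\,\overline{g}$ in $\CA/\CI$ with lifts $g\colon X\to Z$ and $h\colon Z\to Y$, the difference $f-hg$ lies in $\CI(X,Y)\subseteq\rad_\CA^2(X,Y)$, so we may write
\[
f = hg + \sum_{k=1}^{N} h_k g_k
\]
with each $g_k\colon X\to Z_k$ and $h_k\colon Z_k\to Y$ radical. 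Setting $T\deff Z\oplus Z_1\oplus\cdots\oplus Z_N$, I would assemble these data into morphisms $G\colon X\to T$ and $H\colon T\to Y$ whose components are $g,g_1,\dots,g_N$ and $h,h_1,\dots,h_N$, so that $f=HG$ in $\CA$. Irreducibility of $f$ then forces $G$ to be a section or $H$ to be a retraction. If $PG=1_X$ for $P=(p_0,p_1,\dots,p_N)$, then $p_0 g = 1_X - \sum_k p_k g_k$; as each $p_k g_k$ is radical, the right-hand side is invertible in $\End_\CA X$, so $p_0 g$ is invertible, $g$ is a section in $\CA$, and $\overline{g}$ is a section in $\CA/\CI$. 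The retraction case is handled symmetrically from $HQ=1_Y$.

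For the reverse direction, suppose $\overline{f}$ is irreducible in $\CA/\CI$. Again by Lemma \ref{lem:X cap I is zero then f X to Y in A is sect iff f mod I is sect, and Y cap I is zero then f is ret iff f mod I ret}, $f$ is neither a section nor a retraction. Given a factorisation $f=hg$ in $\CA$ with $g\colon X\to Z$ and $h\colon Z\to Y$, pass to $\overline{f}=\overline{h}\,\overline{g}$ in $\CA/\CI$; irreducibility of $\overline{f}$ yields that $\overline{g}$ is a section or $\overline{h}$ is a retraction. Applying the relevant part of Lemma \ref{lem:X cap I is zero then f X to Y in A is sect iff f mod I is sect, and Y cap I is zero then f is ret iff f mod I ret}, which requires only the hypothesis on the $X_i$ in the section case and the $Y_j$ in the retraction case (so no assumption on the decomposition of $Z$ is needed), we deduce that $g$ is a section or $h$ is a retraction in $\CA$.

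The main obstacle lies in the forward direction: the perturbation $r=f-hg\in\rad_\CA^2(X,Y)$ does not factor through $Z$ in any useful way, so irreducibility of $f$ cannot be applied to the given two-term factorisation directly; one must first enlarge $Z$ to $T$ and absorb $r$ into a new three-way factorisation. The reason the deduction then succeeds is precisely that the absorbed pieces $g_k,h_k$ are radical, so the unwanted cross-terms generated when splitting $G$ or $H$ are forced to be radical endomorphisms of $X$ or $Y$ and hence disappear after inverting $1-(\text{radical})$.
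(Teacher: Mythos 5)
Your proposal is correct and follows essentially the same route as the paper: both directions hinge on Lemma \ref{lem:X cap I is zero then f X to Y in A is sect iff f mod I is sect, and Y cap I is zero then f is ret iff f mod I ret} for transporting section/retraction status, and the forward direction absorbs the perturbation $f-hg\in\CI(X,Y)\subseteq\rad_{\CA}^{2}(X,Y)$ into an enlarged factorisation and then inverts $1-(\text{radical})$. The only cosmetic difference is that the paper first collapses the element of $\rad_{\CA}^{2}(X,Y)$ into a single composite $ba$ through one object $W$ and uses the $2\times 1$ matrices $(\,h\;\,b\,)$ and $\begin{psmallmatrix}g\\ a\end{psmallmatrix}$, whereas you keep the finite sum and work with $(N+1)$-component matrices; these are the same argument.
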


\begin{proof}
$(\Rightarrow)$ Assume $f\colon X\to Y$ is an irreducible morphism
in $\CA$. By Lemma \ref{lem:X cap I is zero then f X to Y in A is sect iff f mod I is sect, and Y cap I is zero then f is ret iff f mod I ret},
$\overline{f}$ is neither a section nor a retraction in $\CA/\CI$.
Now suppose $\overline{f}=\overline{hg}$ in $\CA/\CI$ for some morphisms
$g\colon X\to Z$, $h\colon Z\to Y$ of $\CA$. Then $f-hg\in\CI(X,Y)\subseteq\rad_{\CA}^{2}(X,Y)$
by Lemma \ref{lem:I admissible ideal, X,Y cap I is zero then I(X,Y) subset of rad^2(X,Y)}.
Therefore, there is an object $W\in\CA$ and morphisms $a\in\rad_{\CA}(X,W),b\in\rad_{\CA}(W,Y)$
such that $f-hg=ba$. This yields $f=hg+ba=(\,h\;\,b\,)\begin{psmallmatrix}g\\ a\end{psmallmatrix},$
so that either $(\,h\;\,b\,)$ is a retraction or $\begin{psmallmatrix}g \\ a\end{psmallmatrix}$
is a section because $f$ is irreducible. First, assume $(\,h\;\,b\,)$
is a retraction. Then there is a morphism $\begin{psmallmatrix}s\\ t\end{psmallmatrix}\colon Y\to Z\oplus W$
such that $1_Y=(\,h\;\,b\,)\begin{psmallmatrix}s\\ t\end{psmallmatrix}=hs+bt.$
Now $b\in\rad_{\CA}(W,Y)$, so $bt\in\rad_{\CA}(Y,Y)$ as $\rad_{\CA}$
is an ideal of $\CA$. Then $hs=1_{Y}-bt$ is invertible, so $h$
is a retraction and hence $\overline{h}$ is also a retraction. In
the other case, we find that $\overline{g}$ is a section in a similar
fashion. Thus, $\overline{f}$ is an irreducible morphism.

$(\Leftarrow)$ Conversely, suppose $\overline{f}\colon X\to Y$ is
irreducible in $\CA/\CI$. By Lemma \ref{lem:X cap I is zero then f X to Y in A is sect iff f mod I is sect, and Y cap I is zero then f is ret iff f mod I ret},
$f$ cannot be a section or a retraction. Assume $f=hg$ for some
$g\colon X\to Z$, $h\colon Z\to Y$ of $\CA$. Then in $\CA/\CI$
we have $\overline{f}=\overline{hg}$ and so either $\overline{h}$
is a retraction or $\overline{g}$ is a section, since $\overline{f}$
is irreducible. Therefore, $h$ is a retraction or $g$ is a section,
respectively, by Lemma \ref{lem:X cap I is zero then f X to Y in A is sect iff f mod I is sect, and Y cap I is zero then f is ret iff f mod I ret}
again. Hence, $f$ is irreducible.
\end{proof}
In a not necessarily $\Hom$-finite, Krull-Schmidt category, the results
\cite[Lem. 1.7 (2), (3)]{LiuS-AR-theory-in-KS-cat},
\cite[Prop. 1.8]{LiuS-AR-theory-in-KS-cat} and \cite[Lem. 1.9 (2)]{LiuS-AR-theory-in-KS-cat}
all hold using the same proofs that Liu provides. This concludes our work
on generalisations of results of Liu. We now recall some last definitions
from \cite{LiuS-AR-theory-in-KS-cat} and prove some
new results.
\begin{defn}
\cite[Def. 2.2]{LiuS-AR-theory-in-KS-cat} An object
$X\in\CA$ is called \emph{pseudo-projective} (respectively, \emph{pseudo-injective})
if there exists a minimal right almost split monomorphism $W\to X$
(respectively, minimal left almost split epimorphism $X\to Y$).
\end{defn}

\begin{defn}
\label{def:(left and right) Auslander-Reiten category}\cite[Def. 2.6]{LiuS-AR-theory-in-KS-cat}
Suppose $\CA$ is a Krull-Schmidt $\ring$-category. We call $\CA$ a
\emph{left Auslander-Reiten} category if, for every indecomposable
$Z\in\CA$, either $Z$ is pseudo-projective or it is the last term
of an Auslander-Reiten sequence in $\CA$. Dually, $\CA$ is a \emph{right
Auslander-Reiten} category if, for every indecomposable $X\in\CA$,
either $X$ is pseudo-injective or it is the first term of an Auslander-Reiten
sequence. If $\CA$ is both a left and right Auslander-Reiten category,
then we simply call $\CA$ an \emph{Auslander-Reiten} category.
\end{defn}

\begin{rem}
Let $\CC$ be a triangulated category with suspension functor $\sus$. Then $\CC$ is said to \emph{have Auslander-Reiten triangles}
if for every indecomposable $Z$ there is an Auslander-Reiten triangle
ending at $Z$ (see \cite[p. 31]{Happel-triangulated-cats-in-rep-theory}). That
is, for each indecomposable $Z$ there is a triangle $X\overset{f}{\to}Y\overset{g}{\to}Z\overset{h}{\to}\sus X$
with $f$ minimal left almost split and $g$ minimal right almost
split. Therefore, a Krull-Schmidt, $\Hom$-finite, triangulated $\ring$-category
that has Auslander-Reiten triangles is immediately seen to be a left
Auslander-Reiten category in light of a result of Liu: \cite[Lem. 6.1]{LiuS-AR-theory-in-KS-cat}
shows that $X\overset{f}{\to}Y\overset{g}{\to}Z\overset{h}{\to}\sus X$
is an Auslander-Reiten triangle if and only if $X\overset{f}{\to}Y\overset{g}{\to}Z$
is an Auslander-Reiten sequence as in Definition \ref{def:AR-sequence in an additive category}.
The $\Hom$-finite assumption may be removed by noting that one can
use Theorem \ref{thm:Liu thm 1.4} in the proof of \cite[Lem. 6.1]{LiuS-AR-theory-in-KS-cat}.
\end{rem}

The following two propositions generalise \cite[Prop. 1.2]{AuslanderReiten-Rep-theory-of-Artin-algebras-V},
and the last theorem of this section is an analogue of \cite[Thm. IV.1.13]{AssemSimsonSkowronski-Vol1}
(see also \cite[Thm. 2.14]{AuslanderReiten-Rep-theory-of-Artin-algebras-IV}).
For the most part, the proofs are straightforward generalisations
of those for the abelian case, using the more general results from
this article and \cite{LiuS-AR-theory-in-KS-cat} as
appropriate. Thus, we only outline the proofs indicating the required
generalised results where it is clear what needs to be done, and provide
more details otherwise.
\begin{prop}
\label{prop:KS left AR cat, I admissible, X,Y cap I is zero, f bar X to Y is irred and right almost split in quotient iff exists X' in I such that X+X' to Y is minimal right almost split}Suppose
$\CA$ is a left Auslander-Reiten category. Let $f\colon X\to Y$
be a morphism in $\CA$ and let $\CI$ be an admissible ideal of $\CA$.
Suppose $X=\bigoplus_{i=1}^{n}X_{i}$ and $Y=\bigoplus_{j=1}^{m}Y_{j}$
are decompositions into indecomposables in $\CA$ with $1_{X_{i}}\notin\CI(X_{i},X_{i}),1_{Y_{j}}\notin\CI(Y_{j},Y_{j})$
for all $i,j$. Then $\overline{f}=f+\CI(X,Y)\colon X\to Y$ is irreducible
and right almost split in $\CA/\CI$, if and only if there exists
$g\colon X'\to Y$ in $\CA$ with $1_{X'}\in\CI(X',X')$ such that
$(\,f\,\;g\,)\colon X\oplus X'\to Y$ is minimal right almost split
in $\CA$.
\end{prop}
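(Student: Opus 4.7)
The plan is to prove each direction separately; the backward direction is relatively direct, while the forward direction rests on a delicate minimality argument.

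For the backward implication, assume $(f,g)\colon X\oplus X'\to Y$ is minimal right almost split in $\CA$ with $1_{X'}\in\CI(X',X')$. Since $\overline{1_{X'}}=0$, we have $\overline{g}=\overline{g\cdot 1_{X'}}=0$. If $\overline{f}$ were a retraction, then Lemma \ref{lem:X cap I is zero then f X to Y in A is sect iff f mod I is sect, and Y cap I is zero then f is ret iff f mod I ret} would make $f$ a retraction in $\CA$ and hence $(f,g)$ a retraction, contradicting right almost splitness; so $\overline{f}$ is not a retraction. Any non-retraction $\overline{m}\colon M\to Y$ in $\CA/\CI$ lifts to a non-retraction in $\CA$ (by the same lemma), factors as $m=fa+gb$ through $(f,g)$, and in $\CA/\CI$ becomes $\overline{m}=\overline{f}\overline{a}$. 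For irreducibility I will first show $f$ is irreducible in $\CA$: by the generalization of \cite[Lem.~1.7(2)]{LiuS-AR-theory-in-KS-cat} each component $f_i$ is irreducible, from which it readily follows that $f$ is neither a section (else $\pi_i s f_i=1_{X_i}$ would make $f_i$ a section) nor a retraction (else $(f,g)$ would be); and given $f=hg'$ with $h$ not a retraction, the factorization $h=(f,g)\binom{h_1}{h_2}=fh_1+gh_2$ gives $f=f(h_1g')+g(h_2g')$, so $\Psi=\begin{psmallmatrix}h_1g' & 0 \\ h_2g' & 1_{X'}\end{psmallmatrix}$ satisfies $(f,g)\Psi=(f,g)$; by right minimality of $(f,g)$, $\Psi$ is an automorphism, so $h_1g'$ is an automorphism of $X$ and $g'$ is a section. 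Proposition \ref{prop:I admissible, X,Y cap I is zero then f irred iff f mod I irred} then transfers irreducibility of $f$ in $\CA$ to $\overline{f}$ in $\CA/\CI$.

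For the forward implication, assume $\overline{f}$ is irreducible and right almost split in $\CA/\CI$. Then $\End_{\CA/\CI}Y$ is local, so by Lemma \ref{lem:Let I be an ideal of additive A, X cap I is 0 implies EndX in A local iff EndX in A mod I local} $\End_\CA Y$ is local and $Y$ is indecomposable in $\CA$ (and $1_Y\notin\CI$, since $\overline{1_Y}\neq 0$). As $\CA$ is a left Auslander-Reiten category, there is a minimal right almost split morphism $h\colon V\to Y$ in $\CA$. Proposition \ref{prop:I admissible, X,Y cap I is zero then f irred iff f mod I irred} gives $f$ irreducible in $\CA$, and since $f$ is not a retraction it factors as $f=h\alpha$; irreducibility of $f$ together with $h$ not being a retraction forces $\alpha$ to be a section. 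Split idempotents then yield $V\cong X\oplus X'$ with $\alpha$ the canonical inclusion, so $h$ identifies with a minimal right almost split morphism $(f,g)\colon X\oplus X'\to Y$.

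The principal obstacle is proving $1_{X'}\in\CI(X',X')$; I will argue by contradiction. Suppose some indecomposable summand $X'_k$ of $X'$ has $1_{X'_k}\notin\CI$. Then $g_k\colon X'_k\to Y$ is irreducible in $\CA$ by the generalization of \cite[Lem.~1.7(2)]{LiuS-AR-theory-in-KS-cat}, hence $\overline{g_k}$ is irreducible in $\CA/\CI$ by Proposition \ref{prop:I admissible, X,Y cap I is zero then f irred iff f mod I irred}. Right almost splitness of $\overline{f}$ yields $\overline{g_k}=\overline{f}\overline{\beta_k}$, and irreducibility of $\overline{g_k}$ combined with $\overline{f}$ not being a retraction forces $\overline{\beta_k}$ to be a section. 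Locality of $\End_\CA X'_k$ and Lemma \ref{lem:1_Xi notin in ideal I, then endos of X factoring through I are radical} upgrade this to $\beta_k$ being a section in $\CA$, so by Krull-Schmidt one has $X=X_{i_0}\oplus X''$ with $X_{i_0}\cong X'_k$ via an isomorphism $\widetilde{\beta_k}$, and $g_k=f\beta_k+\gamma_k$ for some $\gamma_k\in\CI(X'_k,Y)$. The unipotent automorphism $\Theta=1_{X\oplus X'}-\iota_{X_{i_0}}\widetilde{\beta_k}\pi_{X'_k}$ transforms $(f,g)$ into a minimal right almost split morphism $h'=(f,g)\Theta$ whose restriction to $X'_k$ is $\gamma_k$. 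Applying admissibility (iii) to $h'$ and $\gamma_k\in\CI$ yields $\binom{a}{b}\in\CI(X'_k,X\oplus X')$ with $\gamma_k=h'\binom{a}{b}$; since also $\gamma_k=h'\iota_{X'_k}$, we obtain $h'\eta=0$ for $\eta=\iota_{X'_k}-\binom{a}{b}$. Then $\psi=1_{X\oplus X'}-\eta\pi_{X'_k}$ satisfies $h'\psi=h'$, so $\psi$ is an automorphism by right minimality of $h'$; but $\psi\iota_{X'_k}=\binom{a}{b}\in\CI$, so the entire $X'_k$-column of $\psi$ lies in $\CI$. Evaluating the $(X'_k,X'_k)$-entry of $\psi^{-1}\psi=1_{X\oplus X'}$ then expresses $1_{X'_k}$ as a finite sum of compositions factoring through $\CI$, contradicting $1_{X'_k}\notin\CI$ and completing the proof.
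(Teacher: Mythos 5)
Your proof is correct, and while the skeleton of your forward direction (transfer irreducibility down to $\CA$, get $\End_{\CA}Y$ local, use the left Auslander-Reiten hypothesis to complete $f$ to a minimal right almost split $(\,f\;\,g\,)\colon X\oplus X'\to Y$) matches the paper's, you handle the crux --- showing $1_{X'}\in\CI(X',X')$ --- by a genuinely different route. The paper pushes $(\,f\;\,g\,)$ down to $\CA/\CI$ via \cite[Lem. 1.7]{LiuS-AR-theory-in-KS-cat}, factors it through the right almost split $\overline{f}$, and uses right minimality of $(\,\overline{f}\;\,\overline{g}\,)$ \emph{in the quotient}: the resulting endomorphism $\begin{psmallmatrix}\overline{a}&\overline{b}\\0&0\end{psmallmatrix}$ must be an automorphism, and its zero bottom row forces $\overline{1_{X'}}=0$ in one line. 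You instead stay in $\CA$, argue summand-by-summand by contradiction, and invoke admissibility axiom (iii) directly together with right minimality of $h'$ in $\CA$; your closing step ($\psi$ an automorphism whose $X'_k$-column lies in $\CI$, whence $1_{X'_k}=(\pi_{X'_k}\psi^{-1})(\psi\iota_{X'_k})\in\CI$) is sound because $\CI$ is an ideal. Your version is longer but avoids Liu's transfer lemma in the forward direction; the paper's is shorter but leans on it. Likewise your backward direction re-proves right-almost-splitness and irreducibility of $\overline{f}$ by hand (via right minimality of $(\,f\;\,g\,)$ and then Proposition \ref{prop:I admissible, X,Y cap I is zero then f irred iff f mod I irred}), where the paper just notes that $\overline{\iota_X}\colon X\to X\oplus X'$ is an isomorphism in $\CA/\CI$ and composes it with the minimal right almost split $(\,\overline{f}\;\,\overline{g}\,)$. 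Two small points of care: the fact that the restriction of a minimal right almost split morphism to a nonzero direct summand of its domain is irreducible is \cite[Thm. 2.4]{AuslanderReiten-Rep-theory-of-Artin-algebras-IV} rather than Liu's Lemma 1.7(2), which concerns morphisms between indecomposables; and your $\Theta$ should either use the whole section (take $\Theta=1_{X\oplus X'}-\iota_{X}\beta_k\pi_{X'_k}$) or you should first adjust the decomposition $X=X_{i_0}\oplus X''$ so that $\beta_k=\iota_{X_{i_0}}\widetilde{\beta_k}$ exactly --- otherwise $h'\iota_{X'_k}=g_k-f\iota_{X_{i_0}}\widetilde{\beta_k}$ need not equal $\gamma_k=g_k-f\beta_k$. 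Neither issue is a gap; both are routine to repair.
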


\begin{proof}
$(\Rightarrow)$ Use: Proposition \ref{prop:I admissible, X,Y cap I is zero then f irred iff f mod I irred}
instead of \cite[Prop. 1.2 (a)]{AuslanderReiten-Rep-theory-of-Artin-algebras-V}
to show $f$ is irreducible; \cite[Lem. 2.3]{AuslanderReiten-Rep-theory-of-Artin-algebras-IV}
and Lemma \ref{lem:Let I be an ideal of additive A, X cap I is 0 implies EndX in A local iff EndX in A mod I local}
to show $\End_{\CA}Y$ is local; and \cite[Thm. 2.4]{AuslanderReiten-Rep-theory-of-Artin-algebras-IV}
and that $\CA$ is a left Auslander-Reiten category to obtain a minimal
right almost split morphism $(\,f\,\;g\,)\colon X\oplus X'\to Y$.

By \cite[Lem. 1.7]{LiuS-AR-theory-in-KS-cat}, the
morphism $(\,\overline{f}\,\;\overline{g}\,)\colon X\oplus X'\to Y$
is minimal right almost split, and hence a non-retraction, in $\CA/\CI$.
Since $\overline{f}\colon X\to Y$ is right almost split, there exists
$(\,\overline{a}\,\;\overline{b}\,)\colon X\oplus X'\to X$ such that
$(\,\overline{fa}\,\;\overline{fb}\,)=\overline{f}\circ(\,\overline{a}\,\;\overline{b}\,)=(\,\overline{f}\,\;\overline{g}\,)$.
We now deviate from the proof given in \cite{AuslanderReiten-Rep-theory-of-Artin-algebras-V}.
This implies $$(\,\overline{f}\,\;\overline{g}\,)\begin{pmatrix}\overline{a} & \overline{b} \\ 0 & 0\end{pmatrix}=(\,\overline{fa}\,\;\overline{fb}\,)=(\,\overline{f}\,\;\overline{g}\,),$$so
$\begin{psmallmatrix}\overline{a} & \overline{b} \\ 0 & 0\end{psmallmatrix}$
is an automorphism of $X\oplus X'$ in $\CA/\CI$ as $(\,\overline{f}\,\;\overline{g}\,)$
is right minimal. Hence, there is $\begin{psmallmatrix}\overline{r} & \overline{s} \\ \overline{t} & \overline{u}\end{psmallmatrix}\in\End_{\CA/\CI}(X\oplus X')$
such that $$\begin{pmatrix}\overline{r} & \overline{s} \\ \overline{t} & \overline{u}\end{pmatrix}\begin{pmatrix}\overline{a} & \overline{b} \\ 0 & 0\end{pmatrix}=\begin{pmatrix}1_X & 0 \\ 0 & 1_{X'}\end{pmatrix}=\begin{pmatrix}\overline{a} & \overline{b} \\ 0 & 0\end{pmatrix}\begin{pmatrix}\overline{r} & \overline{s} \\ \overline{t} & \overline{u}\end{pmatrix}=\begin{pmatrix}\overline{ar}+\overline{bt} & \overline{as}+\overline{bu} \\ 0 & 0\end{pmatrix}.$$Therefore,
$\overline{1_{X'}}=0$ and hence $1_{X'}\in\CI(X',X')$.

$(\Leftarrow)$ Use \cite[Lem. 1.7]{LiuS-AR-theory-in-KS-cat}
to get that $(\,\overline{f}\,\;\overline{g}\,)$ is minimal right
almost split in $\CA/\CI$, and that $\overline{\iota_{X}}\colon X\into X\oplus X'$
is an isomorphism in the factor category as $1_{X'}\in\CI(X',X')$.
\end{proof}
Dually, the following is also true.
\begin{prop}
\label{prop:KS right AR cat, I admissible, X,Y cap I is zero, f bar X to Y is irred and left almost split in quotient iff exists Y' in I such that X to Y+Y' is minimal left almost split}Suppose
$\CA$ is a right Auslander-Reiten category. Let $f\colon X\to Y$
be a morphism in $\CA$ and let $\CI$ be an admissible ideal of $\CA$.
Suppose $X=\bigoplus_{i=1}^{n}X_{i}$ and $Y=\bigoplus_{j=1}^{m}Y_{j}$
are decompositions into indecomposables in $\CA$ with $1_{X_{i}}\notin\CI(X_{i},X_{i}),1_{Y_{j}}\notin\CI(Y_{j},Y_{j})$
for all $i,j$.  Then $f+\CI(X,Y)\colon X\to Y$ is irreducible and
left almost split in $\CA/\CI$, if and only if there exists $g\colon X\to Y'$
in $\CA$ with $1_{Y'}\in\CI(Y',Y')$ such that $\begin{psmallmatrix}f\\g\end{psmallmatrix}\colon X\to Y\oplus Y'$
is minimal left almost split in $\CA$.
\end{prop}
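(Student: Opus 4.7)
The plan is to dualise the proof of the preceding proposition (Proposition \ref{prop:KS left AR cat, I admissible, X,Y cap I is zero, f bar X to Y is irred and right almost split in quotient iff exists X' in I such that X+X' to Y is minimal right almost split}) essentially line by line, since every ingredient used there (the admissibility of $\CI$, the behaviour of irreducible morphisms under the quotient, locality of endomorphism rings, and the existence of (co)minimal almost split morphisms in an Auslander--Reiten category) has a dual counterpart already available in the paper or in the literature cited.

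For the forward implication, suppose $\overline{f}\colon X\to Y$ is irreducible and left almost split in $\CA/\CI$. Proposition \ref{prop:I admissible, X,Y cap I is zero then f irred iff f mod I irred} gives that $f$ itself is irreducible in $\CA$. Because $\overline{f}$ is left almost split, the dual of \cite[Lem. 2.3]{AuslanderReiten-Rep-theory-of-Artin-algebras-IV} yields that $\End_{\CA/\CI}X$ is local, and then Lemma \ref{lem:Let I be an ideal of additive A, X cap I is 0 implies EndX in A local iff EndX in A mod I local} promotes this to $\End_{\CA}X$ being local, i.e.\ $X$ is indecomposable in $\CA$. Since $\CA$ is right Auslander--Reiten, either $X$ is pseudo-injective or $X$ is the first term of an Auslander--Reiten sequence in $\CA$; in either case the dual of \cite[Thm.~2.4]{AuslanderReiten-Rep-theory-of-Artin-algebras-IV} produces a minimal left almost split morphism out of $X$, which we may write as $\begin{psmallmatrix}f_0\\g\end{psmallmatrix}\colon X\to Y_0\oplus Y'$ with $Y_0$ containing $Y$ as a summand (by the uniqueness of irreducible morphisms out of $X$ via Theorem \ref{thm:Liu thm 1.4}(ii), matching $f_0$ up with $f$). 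After possibly adjusting by an automorphism, we can take $f_0=f$ and obtain $\begin{psmallmatrix}f\\g\end{psmallmatrix}\colon X\to Y\oplus Y'$ minimal left almost split in $\CA$. To see that $1_{Y'}\in\CI(Y',Y')$, apply \cite[Lem.~1.7]{LiuS-AR-theory-in-KS-cat} to get that $\begin{psmallmatrix}\overline{f}\\\overline{g}\end{psmallmatrix}$ is minimal left almost split in $\CA/\CI$; now use that $\overline{f}$ is itself left almost split in $\CA/\CI$ to factor it as $\overline{f}=\bigl(\overline{a}\;\;\overline{b}\bigr)\begin{psmallmatrix}\overline{f}\\\overline{g}\end{psmallmatrix}$, giving a commutative diagram
\[
\begin{pmatrix}\overline{f}\\\overline{g}\end{pmatrix}
=
\begin{pmatrix}1_Y & 0\\ 0 & 0\end{pmatrix}
\begin{pmatrix}\overline{f}\\\overline{g}\end{pmatrix},
\]
so that (by left minimality in $\CA/\CI$) the matrix $\begin{psmallmatrix}1_Y & 0\\ 0 & 0\end{psmallmatrix}$ must be an automorphism of $Y\oplus Y'$ in $\CA/\CI$; comparing lower right entries as in the previous proposition forces $\overline{1_{Y'}}=0$, i.e.\ $1_{Y'}\in\CI(Y',Y')$.

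For the converse, assume $\begin{psmallmatrix}f\\g\end{psmallmatrix}\colon X\to Y\oplus Y'$ is minimal left almost split in $\CA$ with $1_{Y'}\in\CI(Y',Y')$. Then \cite[Lem.~1.7]{LiuS-AR-theory-in-KS-cat} again shows that $\begin{psmallmatrix}\overline{f}\\\overline{g}\end{psmallmatrix}$ is minimal left almost split (hence irreducible) in $\CA/\CI$. But the hypothesis $1_{Y'}\in\CI(Y',Y')$ means $Y'\cong 0$ in $\CA/\CI$, so the canonical projection $\overline{\pi_Y}\colon Y\oplus Y'\to Y$ is an isomorphism in $\CA/\CI$, and therefore $\overline{f}=\overline{\pi_Y}\circ\begin{psmallmatrix}\overline{f}\\\overline{g}\end{psmallmatrix}$ inherits the properties of being irreducible and left almost split in $\CA/\CI$.

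The only genuinely non-routine point is the matching step in the forward direction, where one must identify $Y$ as a summand of the codomain of the minimal left almost split morphism out of $X$ and arrange the component to be $f$; this is handled by combining the uniqueness statement in Theorem \ref{thm:Liu thm 1.4} with the Krull--Schmidt decomposition of $Y$. Everything else is a mechanical dualisation of the previous proof, so no further obstacle is anticipated.
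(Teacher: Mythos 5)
Your overall strategy---dualise the proof of the preceding proposition step by step---is exactly what the paper does (it states this proposition without a separate proof, as the formal dual of the one before it), and most of your steps dualise correctly. However, the key computation establishing $1_{Y'}\in\CI(Y',Y')$ in the forward direction is dualised incorrectly. The displayed identity
\[
\begin{pmatrix}\overline{f}\\\overline{g}\end{pmatrix}=\begin{pmatrix}1_Y&0\\0&0\end{pmatrix}\begin{pmatrix}\overline{f}\\\overline{g}\end{pmatrix}
\]
is false as written: its second row asserts $\overline{g}=0$, which is not known at this stage (and in fact only follows once $1_{Y'}\in\CI(Y',Y')$ has been proved, so the step is circular). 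It also does not follow from the factorisation $\overline{f}=(\,\overline{a}\;\,\overline{b}\,)\begin{psmallmatrix}\overline{f}\\\overline{g}\end{psmallmatrix}$ you propose, which only yields $\overline{a}\,\overline{f}+\overline{b}\,\overline{g}=\overline{f}$ and produces no endomorphism of $Y\oplus Y'$ with vanishing $(2,2)$-entry fixing $\begin{psmallmatrix}\overline{f}\\\overline{g}\end{psmallmatrix}$. The correct dual of the paper's argument factors in the other direction: since $\overline{f}$ is left almost split and $\begin{psmallmatrix}\overline{f}\\\overline{g}\end{psmallmatrix}$ is not a section (being minimal left almost split in $\CA/\CI$ by the dual part of \cite[Lem. 1.7]{LiuS-AR-theory-in-KS-cat}), there exists $\begin{psmallmatrix}\overline{a}\\\overline{b}\end{psmallmatrix}\colon Y\to Y\oplus Y'$ with $\begin{psmallmatrix}\overline{a}\\\overline{b}\end{psmallmatrix}\overline{f}=\begin{psmallmatrix}\overline{f}\\\overline{g}\end{psmallmatrix}$. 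Then
\[
\begin{pmatrix}\overline{a}&0\\\overline{b}&0\end{pmatrix}\begin{pmatrix}\overline{f}\\\overline{g}\end{pmatrix}=\begin{pmatrix}\overline{a}\,\overline{f}\\\overline{b}\,\overline{f}\end{pmatrix}=\begin{pmatrix}\overline{f}\\\overline{g}\end{pmatrix},
\]
so left minimality forces $\begin{psmallmatrix}\overline{a}&0\\\overline{b}&0\end{psmallmatrix}$ to be an automorphism of $Y\oplus Y'$ in $\CA/\CI$; composing with a two-sided inverse on the left and comparing $(2,2)$-entries gives $\overline{1_{Y'}}=0$, i.e.\ $1_{Y'}\in\CI(Y',Y')$.

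Two smaller points. First, the factorisation $\overline{f}=(\,\overline{a}\;\,\overline{b}\,)\begin{psmallmatrix}\overline{f}\\\overline{g}\end{psmallmatrix}$ that you do write down exists, but because $\begin{psmallmatrix}\overline{f}\\\overline{g}\end{psmallmatrix}$ is left almost split and $\overline{f}$ is not a section---not ``because $\overline{f}$ is left almost split''; in any case it is the wrong factorisation to use. Second, your appeal to Theorem \ref{thm:Liu thm 1.4}(ii) to match $f$ with a component of the minimal left almost split morphism out of $X$ is unavailable when $X$ is pseudo-injective rather than the first term of an Auslander--Reiten sequence; the dual of \cite[Thm. 2.4]{AuslanderReiten-Rep-theory-of-Artin-algebras-IV} already performs this matching in both cases, so that detour should be dropped. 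The converse direction of your argument is fine.
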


Our main result of this section is the following characterisation
of Auslander-Reiten sequences, which is a more general version of
\cite[Thm. IV.1.13]{AssemSimsonSkowronski-Vol1}. Furthermore, statement
(f) in \cite{AssemSimsonSkowronski-Vol1} has stronger assumptions than
the corresponding statement (vi) below: more precisely, in (vi) we
do not assume any indecomposability assumptions on the first and last
term of the short exact sequence.
\begin{thm}
\label{thm: ASS IV.1.13 for KS quasi-abelian cat - xi X to Y to Z exact seq AR iff EndX local and g right almost split iff EndZ local and f right almost split iff f min left almost split iff g min right almost split iff EndX,EndZ local and f,g irreducible}Let
$\CA$ be a skeletally small, preabelian category. Let $\xi\colon X\overset{f}{\to}Y\overset{g}{\to}Z$
be a stable exact sequence in $\CA$, i.e. $\xi\in\Ext_{\CA}^{1}(Z,X)$.
Then statements \emph{(i)--(iii)} are equivalent.\emph{\begin{enumerate}[(i)]
\item $\xi$ \emph{is an Auslander-Reiten sequence.}
\item $\End_{\CA}(X)$ \emph{is local and} $g$ \emph{is right almost split.}
\item $\End_{\CA}(Z)$ \emph{is local and} $f$ \emph{is left almost split.}
\end{enumerate}}Suppose further that $\CA$ is quasi-abelian and Krull-Schmidt. Then\emph{
(i)--(vi)} are equivalent.\emph{\begin{enumerate}[(i)]
\setcounter{enumi}{3} 
\item $f$ \emph{is minimal left almost split.}
\item $g$ \emph{is minimal right almost split.}
\item $f$ and $g$ \emph{are irreducible.}
\end{enumerate}}
\end{thm}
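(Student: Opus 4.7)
The plan is to prove the seven equivalences in two stages, matching the two hypothesis regimes. In the preabelian regime I will establish (i) $\Leftrightarrow$ (ii) $\Leftrightarrow$ (iii); in the quasi-abelian Krull--Schmidt regime I will reduce each of (iv), (v), (vi) back to (ii) or (iii), handling the trivial case $Y=0$ (which forces $X=Z=0$ in a short exact sequence) separately.

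For the preabelian part, (i) $\Rightarrow$ (ii) is immediate since $g$ is right almost split by definition of an AR-sequence, and $f$ being left almost split forces $\End_\CA X$ to be local. For the converse (ii) $\Rightarrow$ (i), I will give the standard pushout argument: for a non-section $u\colon X\to X'$, form the stable exact pushout $u\xi\colon X'\to M\to Z$; either its right-hand map $g'$ is a retraction (so by the Splitting Lemma $u\xi$ splits and $u$ factors through $f$), or $g$ right almost split gives $g'=gk$, from which one assembles an endomorphism $(au,kv,1_Z)\colon \xi\to\xi$, and then Proposition~\ref{prop:in preabelian cat endo u v w of non-split stable short exact seq X to Y to Z with EndX (EndZ) local and w (u) iso implies u (w) iso and v iso too} forces $au$ to be an automorphism, whence $u$ is a section---a contradiction. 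The minimality of $f$ and of $g$ then follow from the same proposition applied to the endomorphisms $(1_X,v,w)$ and $(w,u,1_Z)$ of $\xi$, using that $g$ right almost split also forces $\End_\CA Z$ to be local. The equivalence (i) $\Leftrightarrow$ (iii) is dual.

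The implications (i) $\Rightarrow$ (iv), (v) are by definition, and (i) $\Rightarrow$ (vi) is Proposition~\ref{prop:left (right) minimal almost split with nonzero target (domain) is irreducible}. For (iv) $\Rightarrow$ (iii), the hypothesis already supplies that $f$ is left almost split; to obtain $\End_\CA Z$ local I will first use Proposition~\ref{prop:left (right) minimal almost split with nonzero target (domain) is irreducible} to see that $f$ is irreducible, then Proposition~\ref{prop:A semi-abelian xi X f Y g Z short exact - f irred implies xi min and Z indecomp - and dually for g}(i) (quasi-abelian implies semi-abelian) to conclude $Z$ is indecomposable, and finally the Krull--Schmidt property. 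The implication (v) $\Rightarrow$ (ii) is dual.

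The main obstacle is (vi) $\Rightarrow$ (ii), since (vi) assumes neither minimality nor a local-endomorphism condition. Given $f,g$ irreducible, Proposition~\ref{prop:A semi-abelian xi X f Y g Z short exact - f irred implies xi min and Z indecomp - and dually for g} makes both $X$ and $Z$ indecomposable, hence $\End_\CA X$ and $\End_\CA Z$ are local. To show $g$ is right almost split, let $u\colon U\to Z$ be a non-retraction and decompose $U=\bigoplus_{i=1}^n U_i$ into indecomposables; each component $u_i=u\iota_i$ is again a non-retraction (else a section of $u_i$ lifts to one of $u$), so it suffices to treat the case $U$ indecomposable. Apply Proposition~\ref{prop:AR IV Prop. 2.8 f X to Y, g Y to Z then (i) f irred kernel iff for all M subfunctor of (-,Z) M contains or contained in Im(-,g) and (ii) dual statement}(i) to the subfunctor $\SF=\image(\Hom_\CA(-,u))$ of $\Hom_\CA(-,Z)$: either $\SF\subseteq\image(\Hom_\CA(-,g))$, giving $u=g\hat u$ directly, or $\SF\supseteq\image(\Hom_\CA(-,g))$, in which case evaluating at $Y$ produces $g=uh$ for some $h\colon Y\to U$. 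In the second alternative, $g$ irreducible and $u$ non-retraction force $h$ to be a section, and Lemma~\ref{lem:add cat with split idems, X indecomp and Y neq 0, then f X to Y retraction implies f isomorphism - and dual statement} (with $U$ indecomposable and $Y\neq 0$) promotes $h$ to an isomorphism, so $u=gh^{-1}$ still factors through $g$. Reassembling the components yields that $g$ is right almost split in general, which combined with $\End_\CA X$ local is (ii).
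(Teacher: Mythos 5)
Your proof is correct and follows essentially the same route as the paper's: the same chain of implications, the same use of Proposition \ref{prop:in preabelian cat endo u v w of non-split stable short exact seq X to Y to Z with EndX (EndZ) local and w (u) iso implies u (w) iso and v iso too} for the equivalence of (i)--(iii), the same derivation of (iv)/(v) via irreducibility and indecomposability, and the same decomposition into indecomposable summands for (vi)\,$\Rightarrow$\,(ii). The only divergence is that in (vi)\,$\Rightarrow$\,(ii) you invoke the subfunctor characterisation of Proposition \ref{prop:AR IV Prop. 2.8 f X to Y, g Y to Z then (i) f irred kernel iff for all M subfunctor of (-,Z) M contains or contained in Im(-,g) and (ii) dual statement} where the paper applies the lifting criterion of Proposition \ref{prop:ASS Lem IV.1.7(a) or AR IV Prop 2.7 (a) for quasi-abelian category - criterion for irreducible if f is a kernel} directly --- an inessential detour, since the former is deduced from the latter.
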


\begin{proof}
From Definition \ref{def:AR-sequence in an additive category} and
\cite[Lem. 2.3]{AuslanderReiten-Rep-theory-of-Artin-algebras-IV},
(ii) and (iii) follow from (i). To show (ii) $\Rightarrow$ (iii)
and (iii) $\Rightarrow$ (i), use Proposition \ref{prop:in preabelian cat endo u v w of non-split stable short exact seq X to Y to Z with EndX (EndZ) local and w (u) iso implies u (w) iso and v iso too}
instead of \cite[Lem. IV.1.12]{AssemSimsonSkowronski-Vol1}. And (iii)
$\Rightarrow$ (ii) is dual to (ii) $\Rightarrow$ (iii), so this
establishes the equivalence of (i)--(iii).

Now suppose further that $\CA$ is quasi-abelian and Krull-Schmidt.
Statements (iv) and (v) follow from (i) by definition, and (iv) $\Rightarrow$
(iii) is dual to (v) $\Rightarrow$ (ii).

First, we claim that if $g$ is right almost split then $Y$ is non-zero.
Indeed, if $Y=0$ then $1_{Z}\circ g=g=0$, which implies $1_{Z}=0$
as $g=\cok f$ is an epimorphism (since $\xi$ is short exact). However,
if $g$ is right almost split,  then $\End_{\CA}Z$ is local by \cite[Lem. 2.3]{AuslanderReiten-Rep-theory-of-Artin-algebras-IV}
and hence $1_{Z}$ cannot be the zero morphism.

(v) $\Rightarrow$ (ii). Since $g$ is right almost split, we may
use our claim above to conclude that $g$ is irreducible by Proposition
\ref{prop:left (right) minimal almost split with nonzero target (domain) is irreducible}
(ii). Then $X$ is indecomposable by Proposition \ref{prop:A semi-abelian xi X f Y g Z short exact - f irred implies xi min and Z indecomp - and dually for g},
which is equivalent to $\End_{\CA}X$ being local as $\CA$ is Krull-Schmidt.

For (i) implies (vi), use Proposition \ref{prop:left (right) minimal almost split with nonzero target (domain) is irreducible}
(noting again that $Y$ is non-zero if $g$ is right almost split).

(vi) $\Rightarrow$ (ii). Suppose that $f,g$ are irreducible. First
we show that $g$ is right almost split. Note that $g$ is not a retraction
as it is irreducible by assumption. Thus, let $h\colon M\to Z$ be
a non-retraction. Since $\CA$ is Krull-Schmidt we may write $M=\bigoplus_{i=1}^{n}M_{i}$,
for some indecomposable objects $M_{i}$, and $h=(\,h_{1}\,\;\cdots\,\;h_{n}\,)$
where $h_{i}\colon M_{i}\to Z$. Since $h$ is not a retraction, it
follows that no $h_{i}$ may be a retraction either. Fix $i\in\{1,\ldots,n\}$.
As $f$ is irreducible, the criterion from Proposition \ref{prop:ASS Lem IV.1.7(a) or AR IV Prop 2.7 (a) for quasi-abelian category - criterion for irreducible if f is a kernel}
tells us that either there exists $v_{i,1}\colon M_{i}\to Y$ such
that $gv_{i,1}=h_{i}$ or there exists $v_{i,2}\colon Y\to M_{i}$
such that $g=h_{i}v_{i,2}$. Suppose we are in the latter case and
that $g=h_{i}v_{i,2}$ for some $v_{i,2}\colon Y\to M_{i}$. Then,
as $g$ is irreducible and $h_{i}$ is not a retraction, we have that
$v_{i,2}$ is section. But $M_{i}$ is indecomposable and $Y\neq0$,
so $v_{i,2}$ is in fact an isomorphism by Lemma \ref{lem:add cat with split idems, X indecomp and Y neq 0, then f X to Y retraction implies f isomorphism - and dual statement}.
In this case, we then get $h_i=g\circ\tensor[]{v}{_{i,2}^{-1}}$.
Therefore, for all $1\leq i\leq n$ we have that $h_{i}=g\circ w_{i}$
for some $w_{i}\colon M_{i}\to Y$. Hence, $h=(\,h_{1}\,\;\cdots\,\;h_{n}\,)=g\circ(\,w_{1}\,\;\cdots\,\;w_{n}\,)$
and $g$ is seen to be right almost split. Dually, we have that $f$
is left almost split and hence $\End_{\CA}X$ is local by \cite[Lem. 2.3]{AuslanderReiten-Rep-theory-of-Artin-algebras-IV}.

This shows (i)--(vi) are equivalent and finishes the proof.
\end{proof}

We conclude this section with an example of a $\Hom$-infinite, Krull-Schmidt category. The author is grateful to P.-G. Plamondon for communicating the following example and answering several questions.

\begin{example}\label{exa:Markov-quiver}
Let $k$ be a field. Consider the quiver with potential $(Q,W)$ where $Q$ is the quiver 
$$\begin{tikzpicture}
\node (1) at (2,2) {\Large 1};
\node (2) at (4,0) {\Large 2};
\node (3) at (0,0) {\Large 3};
\node (2w) at (3.6,0) {};
\node (3e) at (0.4,0) {};

\node (2nw) at (4,0.2) {};
\node (3ne) at (0,0.2) {};

\node (1se) at (2.2,1.9) {};
\node (1sw) at (1.8,1.9) {};

\draw[->, transform canvas={xshift=0.5em, yshift=0.3em}, font=\tiny]
(1se) edge node[auto,fill=white, anchor=center, pos=0.5] {$a$} (2nw);
\draw[->, transform canvas={xshift=-0.35em, yshift=-0.3em}, font=\tiny]
(1se) edge node[auto,fill=white, anchor=center, pos=0.5] {$a'$} (2nw);

\draw[->, transform canvas={yshift=0.3em}, font=\tiny]
(2w) edge node[auto,fill=white, anchor=center, pos=0.5] {$b'$} (3e);
\draw[->, transform canvas={yshift=-0.6em}, font=\tiny]
(2w) edge node[auto,fill=white, anchor=center, pos=0.5] {$b$} (3e);

\draw[->, transform canvas={xshift=-0.3em, yshift=0.4em}, font=\tiny]
(3ne) edge node[auto,fill=white, anchor=center, pos=0.5] {$c$} (1sw); 
\draw[->, transform canvas={xshift=0.5em, yshift=-0.3em}, font=\tiny]
(3ne) edge node[auto,fill=white, anchor=center, pos=0.5] {$c'$} (1sw); 

\end{tikzpicture}$$
and $W=cba+c'b'a'$ is the potential. Following \cite[\S 2.6]{KellerYang-derived-equivalences-from-mutations-of-quivers-with-potential}, we recall the construction of the \emph{complete Ginzburg dg algebra} $G\deff\wh{\Gamma}(Q,W)$ associated to $Q$. From $Q$, consider the quiver $\wt{Q}$: 
$$\begin{tikzpicture}
\node (1) at (2.5,2.5) {\Large 1};
\node (2) at (5,0) {\Large 2};
\node (3) at (0,0) {\Large 3};
\node (2w) at (4.7,0) {};
\node (2sw) at (4.95,-0.2) {};
\node (2s) at (5.1,-0.2) {};
\node (2nw) at (5,0.2) {};
\node (2e) at (5,0) {};
\node (2n) at (5.0,0.2) {};
\node (3e) at (0.3,0) {};
\node (3se) at (0.0,-0.2) {};
\node (3s) at (0.0,-0.2) {};
\node (1se) at (2.8,2.2) {};
\node (1e) at (2.5,2.2) {};
\node (3n) at (0,0.3) {};
\node (1sw) at (2.3,2.4) {};

\draw[->, transform canvas={xshift=0.5em, yshift=0.3em}, font=\tiny]
(1se) edge node[auto,fill=white, anchor=center, pos=0.5] {$a$} (2nw);
\draw[->, transform canvas={xshift=-0.4em, yshift=-0.3em}, font=\tiny]
(1se) edge node[auto,fill=white, anchor=center, pos=0.5] {$a'$} (2nw);

\draw[->, transform canvas={yshift=0.3em}, font=\tiny]
(2w) edge node[auto,fill=white, anchor=center, pos=0.5] {$b'$} (3e);
\draw[->, transform canvas={yshift=-0.6em}, font=\tiny]
(2w) edge node[auto,fill=white, anchor=center, pos=0.5] {$b$} (3e);

\draw[->, transform canvas={xshift=-0.15em, yshift=0.15em}, font=\tiny]
(3ne) edge node[auto,fill=white, anchor=center, pos=0.5] {$c$} (1sw); 
\draw[->, transform canvas={xshift=0.6em, yshift=-0.5em}, font=\tiny]
(3ne) edge node[auto,fill=white, anchor=center, pos=0.5] {$c'$} (1sw); 
    
\path[->, transform canvas={xshift=0.5em, yshift=0.5em}, font=\tiny, bend right=90, looseness=1.2]
(2e) edge node[auto,fill=white, anchor=center, pos=0.5] {$a^*$} (1e);
\path[->, transform canvas={xshift=0.5em, yshift=0.3em}, font=\tiny,bend right=50]
(2n) edge node[auto,fill=white, anchor=center, pos=0.5] {$a'^*$} (1e);

\draw[->, transform canvas={xshift=0em,yshift=-0.0em}, font=\tiny, bend right=30]
(3se) edge node[auto,fill=white, anchor=center, pos=0.5] {$b'^*$} (2sw);
\draw[->, transform canvas={xshift=0em,yshift=-0.0em}, font=\tiny, bend right=50]
(3s) edge node[auto,fill=white, anchor=center, pos=0.5] {$b^*$} (2s);

\draw[->, transform canvas={xshift=-0.0em, yshift=0.0em}, font=\tiny, bend right=90,looseness=1.2]
(1sw) edge node[auto,fill=white, anchor=center, pos=0.5] {$c^*$} (3n); 
\draw[->, transform canvas={xshift=-0em, yshift=-0.0em}, font=\tiny, bend right=50]
(1sw) edge node[auto,fill=white, anchor=center, pos=0.5] {$c'^*$} (3n); 

\path[->, font=\tiny,]
(1) edge [auto, in=60, out=120, looseness=8] node[above] {$t_1$} (1);

\path[->, font=\tiny,]
(2) edge [auto, in=300, out=0, looseness=8] node[above, xshift=0.6em, yshift=-0.3em] {$t_2$} (2);

\path[->, font=\tiny,]
(3) edge [auto, in=180, out=240, looseness=8] node[above, xshift=-0.5em, yshift=-0.3em] {$t_3$} (3);
\end{tikzpicture}$$ 
The quiver $\wt{Q}$ is given the following grading: arrows $x, x'$ have degree $0$ and arrows $x^*, x'^*$ have degree $-1$ for $x\in\{a,b,c\}$, and the loop $t_i$ has degree $-2$ for $1\leq i\leq 3$. Then $G$ has underlying graded algebra given by the completion of the graded path algebra $k\wt{Q}$ with respect to the ideal generated by the arrows of $\wt{Q}$ in the category of graded $k$-vector spaces. Furthermore, $G$ is a dg algebra, equipped with a differential of degree $+1$. 

Let $\rmod{G}$, $\kom(G)$ and $\der(G)$ denote the category of right dg $G$-modules, the homotopy category of right dg $G$-modules and the corresponding derived category, respectively. The \emph{perfect derived category} $\per G$ is the smallest, full, subcategory of $\der(G)$ that contains $G$, and is closed under shifts, extensions and direct summands. Let $J(Q,W)$ denote the \emph{Jacobian algebra} associated to $(Q,W)$. Then $J(Q,W)$ is the complete path algebra $\wh{kQ}$ modulo the closure of the ideal generated by $\partial_x(W)$ and $\partial_{x'}(W)$ for $x\in\{a,b,c\}$, where 
$$\partial_{x}(W)\deff \sum_{W=yxz}zy,$$ 
where the sum is over all decompositions of $W$ with $y,z$ (possibly trivial) paths. The sum $\partial_{x'}(W)$ is defined similarly. It is easy to check that $J(Q,W)$ is infinite-dimensional over $k$.

The category $\per G$ is Krull-Schmidt by \cite[Lem. 2.17]{KellerYang-derived-equivalences-from-mutations-of-quivers-with-potential}. Furthermore, we have  
$$\def\arraystretch{1.5}\begin{array}{rcll}
\End_{\per G}G &=&\End_{\der(G)}G& \text{since }\per G\text{ is a full subcategory}\\
&\iso&\Hom_{\kom(G)}(G,G)&\text{since }G\text{ is cofibrant (see \cite[pp. 2126--2127]{KellerYang-derived-equivalences-from-mutations-of-quivers-with-potential})}\\
&=&H^{0}(\CH om_{\rmod{G}}(G,G))&\\
&\iso&H^{0}(G)&\\
&=&J(Q,W)&\text{by \cite[Lem. 2.8]{KellerYang-derived-equivalences-from-mutations-of-quivers-with-potential}}.
\end{array}$$ 
It follows that $\per G$ is a $\Hom$-infinite $k$-category and is also Krull-Schmidt. We remark that, by \cite[Lem. 2.9]{Plamondon-cluster-characters-for-cluster-categories-with-infinite-dimensional-morphism-spaces}, the corresponding cluster category is also $\Hom$-infinite $k$-category in this case.
\end{example}

\section{\label{sec:example}An example from cluster theory}

We now present an example coming from cluster theory that encapsulates
some of the theory we have explored.
\begin{example}
\label{exa:Q is A_3 R is P_1 oplus P_2}Let $k$ be a field. Consider the cluster category
$\CC\deff\CC_{kQ}$ (as defined in \cite{BMRRT-cluster-combinatorics})
associated to the linearly oriented Dynkin quiver 
$$Q:\quad1\to2\to3.$$ 
It is shown in \cite{BMRRT-cluster-combinatorics}
that $\CC$ is Krull-Schmidt and it is triangulated by
a result of Keller \cite{Keller-triangulated-orbit-categories}. Let $\sus$ denote the suspension functor of $\CC$. Its Auslander-Reiten
quiver, with the meshes omitted, is $$\makebox[\textwidth][c]{\includegraphics[width=0.7\textwidth]{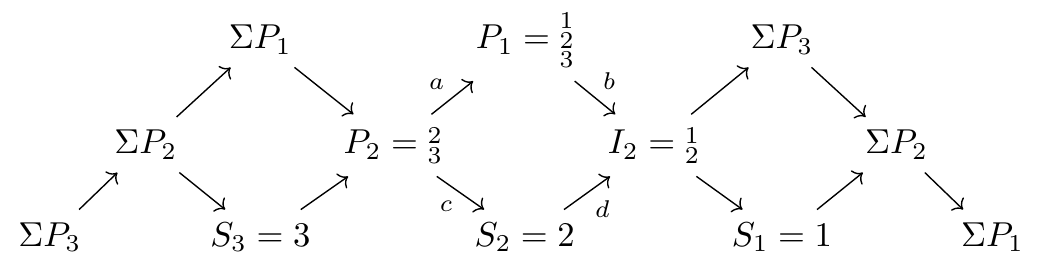}}$$where
the lefthand copy of $\sus P_{i}$ is identified with the righthand
copy (for $i=1,2,3$). We set $R\deff P_{1}\oplus P_{2}$, which is
a basic, rigid object of $\CC$. By $\add\sus R$ we denote the full subcategory of $\CC$ consisting of objects that are isomorphic
to direct summands of finite direct sums of copies of $\sus R$. The
full subcategory $\CX_{R}$ consists of objects $X$ for which $\Hom_{\CC}(R,X)=0$.
Then the pair $((\CS,\CT),(\CU,\CV))=((\add\sus R,\CX_{R}),(\XR,\add\sus R))$
is a twin cotorsion pair on $\CC$ with heart $\overline{\CH}=\CC/[\CX_{R}]$
(see \cite[Lem. 5.4]{Shah-quasi-abelian-hearts-of-twin-cotorsion-pairs-on-triangulated-cats}
and \cite[Cor. 5.9]{Shah-quasi-abelian-hearts-of-twin-cotorsion-pairs-on-triangulated-cats},
or \cite[Exa. 2.10]{Nakaoka-twin-cotorsion-pairs},
for more details), where $[\CX_{R}]$ is the ideal of morphisms factoring
through objects of $\CX_{R}$. Note that $[\CX_{R}]$ is an admissible
ideal by Example \ref{exa:morphisms factoring through subcate that is closed under summands is admissible}
as $\CX_{R}$ is closed under direct summands.

The subcategory $\CX_{R}$ is described pictorially below, where ``$\circ$''
denotes that the corresponding object does not belong to the subcategory.
\begin{gather*}
\CT = \CX_R = \CU \\ \vspace{0.6cm}
\includegraphics[width=0.7\textwidth]{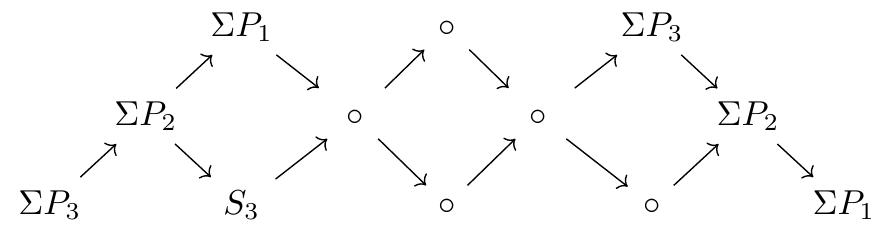}
\end{gather*}The heart $\overline{\CH}=\CC/[\CX_{R}]$ for this twin cotorsion
pair is quasi-abelian by \cite[Thm. 5.5]{Shah-quasi-abelian-hearts-of-twin-cotorsion-pairs-on-triangulated-cats},
and, by \cite[Prop. 2.9]{LiuS-AR-theory-in-KS-cat}, has the following Auslander-Reiten quiver (ignoring the objects
denoted by a ``$\circ$'' that lie in $\CX_{R}$) \begin{gather*}
 \overline{\CH}=\CC/[\CX_R]\\ \vspace{0.6cm}
\includegraphics[width=0.7\textwidth]{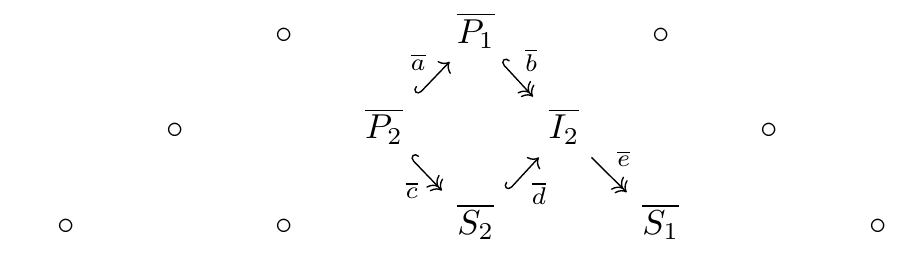}
\end{gather*}where one may define the Auslander-Reiten quiver for a Krull-Schmidt
category as in \cite{LiuS-AR-theory-in-KS-cat}. Again
we have omitted the meshes. Furthermore, we have denoted by $\overline{X}$
the image in $\CC/[\CX_{R}]$ of the object $X$ of $\CC$, monomorphisms
by ``$\hookrightarrow$'' and epimorphisms by ``$\twoheadrightarrow$''.
In this example, we notice that there are precisely two irreducible
morphisms (up to a scalar) that are regular (monic and epic simultaneously)---namely, $\overline{b}$ and $\overline{c}$.

Consider the Auslander-Reiten triangle $\begin{tikzcd}P_2 \arrow{r}{\begin{psmallmatrix}a \\ c\end{psmallmatrix}} &P_1 \oplus S_2 \arrow{r}{(\,b\,\;d\,)}& I_2 \arrow{r}{}& \sus P_2\end{tikzcd}$
in $\CC$, and note that the minimal left almost split morphism $\begin{psmallmatrix}a \\ c\end{psmallmatrix}$
is irreducible by Proposition \ref{prop:left (right) minimal almost split with nonzero target (domain) is irreducible}.
Therefore, by Proposition \ref{prop:I admissible, X,Y cap I is zero then f irred iff f mod I irred},
$\begin{psmallmatrix}\overline{a} \\ \overline{c}\end{psmallmatrix}\colon \overline{P_2}\to \overline{P_1}\oplus \overline{S_2}$
is also irreducible. Similarly, $(\,\overline{b}\,\;\overline{d}\,)\colon \overline{P_1}\oplus \overline{S_2} \to \overline{I_2}$
is irreducible in $\CC/[\CX_{R}]$. We remark that one cannot use
\cite[Lem. 1.7 (1)]{LiuS-AR-theory-in-KS-cat} since
the morphisms are not between indecomposable objects.

One can check that $\begin{psmallmatrix}\overline{a} \\ \overline{c}\end{psmallmatrix}=\ker (\,\overline{b}\,\;\overline{d}\,)$
and $(\,\overline{b}\,\;\overline{d}\,)=\cok \begin{psmallmatrix}\overline{a} \\ \overline{c}\end{psmallmatrix}$
by, for example, using the construction of (co)kernels as in \cite[Lem. 3.4]{BuanMarsh-BM2}.
So, we have that $$\begin{tikzcd}\overline{P_2} \arrow{r}{\begin{psmallmatrix}\overline{a} \\ \overline{c}\end{psmallmatrix}} &\overline{P_1} \oplus \overline{S_2} \arrow{r}{(\,\overline{b}\,\;\overline{d}\,)}& \overline{I_2}\end{tikzcd}$$is
a short exact sequence in the quasi-abelian, Krull-Schmidt category
$\CC/[\CX_{R}]$. Hence, by Theorem \ref{thm: ASS IV.1.13 for KS quasi-abelian cat - xi X to Y to Z exact seq AR iff EndX local and g right almost split iff EndZ local and f right almost split iff f min left almost split iff g min right almost split iff EndX,EndZ local and f,g irreducible},
the sequence is an Auslander-Reiten sequence because it satisfies
statement (vi) in the Theorem. Note that we could also have established
this fact using \cite[Prop. 1.8]{LiuS-AR-theory-in-KS-cat}
and Proposition \ref{prop:A preabelian, AR sequence (weak exact) is in fact exact}.

Furthermore, this example also shows that the indecomposability conditions
in Proposition \ref{prop:A quasi-abelian f irred mono indecomp target then any irred map to coker of f is epic - and dual}
cannot be removed. The morphism $\begin{psmallmatrix}\overline{a} \\ \overline{c}\end{psmallmatrix}$
is an irreducible monomorphism, but has decomposable target, and the
morphism $\overline{d}$ is an irreducible morphism with codomain
the cokernel of $\begin{psmallmatrix}\overline{a} \\ \overline{c}\end{psmallmatrix}$
that is not epic. Indeed, $\overline{e}\overline{d}=0$ but $\overline{e}\neq0$
so $\overline{d}$ cannot be an epimorphism.
\end{example}


\begin{acknowledgements}
The author would like to thank Robert J. Marsh for his helpful guidance and support during the preparation of this article, and also the University of Leeds for financial support through a University of Leeds 110 Anniversary Research Scholarship. The author is grateful to the referee for comments on an earlier version of the paper.
\end{acknowledgements}

\bibliography{mybib}
\bibliographystyle{mybst}

\end{document}